\def\XXint#1#2#3{{\setbox0=\hbox{$#1{#2#3}{\int}$ }
\vcenter{\hbox{$#2#3$ }}\kern-.6\wd0}}
\newcommand{\margnote}[1]{
\ifthenelse{\boolean{shownotes}}%
{\marginpar{\raggedright\tiny\texttt{#1}}}%
{}%
}
\newcommand{\hole}[1]{
\ifthenelse{\boolean{shownotes}}%
{\begin{center} \fbox{ \rule {.25cm}{0cm}
\rule[-.1cm]{0cm}{.4cm} \parbox{.85\textwidth}{\begin{center}
\texttt{#1}\end{center}} \rule {.25cm}{0cm}}\end{center}}
{}
}
\newtheorem{thm}{Theorem}[section]
\newtheorem{prop}[thm]{Proposition}
\newtheorem{lem}[thm]{Lemma}
\newtheorem{cor}[thm]{Corollary}
\newtheorem{rem}[thm]{Remark}
\theoremstyle{definition}
\newtheorem{defn}[thm]{Definition}
\newcommand{\e}{\varepsilon}		       
\newcommand{\R}{\mathbb{R}}
\newcommand{\N}{\mathbb{N}}
\newcommand{\E}{\mathbb{E}}
\newcommand{\Lip}{\mathrm{Lip}}
\newcommand{\dive}{\mathop{\mathrm {div}}}
\newcommand{\weakto}{\rightharpoonup}
\newcommand{\weaktos}{\stackrel{*}{\rightharpoonup}}
\newcommand{\de}{\,\mathrm{d}}
\newcommand{\supp}{\mathrm{supp}}
\newcommand{\Pb}{\mathscr{P}}
\newcommand{\Pac}{\mathscr{P}^\mathrm{ac}}
\newcommand{\Pc}{\mathscr{P}_c(\R^d)}
\newcommand{\Pn}{\mathscr{P}_2(\mathbb{R}^d)}
\newcommand{\Pp}{(\mathcal{P})}
\newcommand{\Ppe}{(\mathcal{P}_\e)}
\newcommand{\U}{\mathcal{U}}
\newcommand{\Law}{\mathcal{L}}
\newcommand{\FR}[1]{{\color{blue} #1}}
\newcommand{\schema}[1]{{\bf \sc #1}}
\newenvironment{pschema}[1]{\vspace{3mm}
\noindent\begin{Sbox}\begin{minipage}{.95\columnwidth}\vspace{2mm}\begin{center}{\large \schema{#1}}\vspace{5mm}\\
\begin{minipage}{0.9\textwidth}}{\end{minipage}\end{center}\vspace{2mm}\end{minipage}\end{Sbox}\fbox{\TheSbox}\vspace{3mm}}
\numberwithin{equation}{section}
\subjclass{49J45, 93E20, 49J20.}
\keywords{Mean-field equations, optimal control, vanishing viscosity.}
\begin{document}
\title[Vanishing viscosity in mean-field optimal control]{Vanishing viscosity in mean-field optimal control}

\author[G. Ciampa]{Gennaro Ciampa}
\address[G.\ Ciampa]{Dipartimento di Matematica ``Federigo Enriques", Universit\`a degli Studi di Milano, Via Cesare Saldini 50, 20133 Milano, Italy.}
\email[]{\href{gciampa@}{gennaro.ciampa@unimi.it}}

\author[F. Rossi]{Francesco Rossi}
\address[F. Rossi]{Dipartimento di Matematica ``Tullio Levi Civita''\\ Universit\`a degli Studi di Padova\\Via Trieste 63 \\35131 Padova \\ Italy}
\email[]{\href{francesco.rossi@math.unipd.it}{francesco.rossi@math.unipd.it}}

\begin{abstract}
We show the existence of Lipschitz-in-space optimal controls for a class of mean-field control problems with dynamics given by a non-local continuity equation. The proof relies on a vanishing viscosity method: we prove the convergence of the same problem where a diffusion term is added, with a small viscosity parameter.

By using stochastic optimal control, we first show the existence of a sequence of optimal controls for the problem with diffusion. We then build the optimizer of the original problem by letting the viscosity parameter go to zero.
\end{abstract}

\maketitle

\section{Introduction}

In recent years, the study of systems describing crowds of interacting agents has drawn a huge interest from the mathematical and control community.
A better understanding of such interaction phenomena can have a strong impact in several key applications, such as road traffic and egress problems for pedestrians. For a few reviews about this topic, see \textit{e.g.} \cite{axelrod,active1,camazine,CPT,helbing,Jackson2010,sepulchre}.

Mean-field equations are the natural limit of such systems, composed of a large number $N$ of interacting particles, when $N$ tends to infinity. The  state of the system is then a density or, more in general, a measure. Mathematically speaking, the system is often transformed from a large-dimensional ordinary differential equation to a partial differential equation, via the so-called mean-field limit, see e.g. \cite{golse, Sznitman91}.

The finite-dimensional models for interacting agents can either be deterministic (in which the position of each agent is clearly identified), or probabilistic (in which the position of each agent is a probability measure). While deterministic models are based on a (supposedly) perfect knowledge of the dynamics, probabilistic models naturally arise when either individual dynamics or interactions are subjected to some form of noise. As a consequence, mean-field equations have deeply different natures in the two cases: the limit of deterministic models is often a continuity equation, while for probabilistic models it is a diffusion equation. See \cite{CarmonaDelarueI,CarmonaDelarueII,golse} for a comprehensive introduction.\\

Beside the dynamics of mean-field equations, it is now relevant to study control problems for them, that are known as {\it mean-field control problems}. In the mean-field limit for deterministic models, a few articles have been dealing with controllability results \cite{Duprez2019,Duprez2020} or explicit syntheses of control laws \cite{Caponigro2015,ControlKCS}. Most of the literature focused on optimal control problems, with contributions ranging from existence results \cite{LipReg,FLOS,FPR,MFOC} to first-order optimality conditions \cite{bensoussan1,MFPMP,PMPWassConst,SetValuedPMP,PMPWass,Cavagnari2018,Cavagnari2020,Pogodaev2016}, to numerical methods \cite{achdou2,Burger2020}. The linear-quadratic case is studied in \cite{LQ-CDC} for the deterministic setting and in \cite{bensoussan1, bensoussan2} for the probabilistic one.\\

Our aim in this article is to develop one more technique to solve mean-field optimal control problems. Indeed, it is natural to expect that, in finite dimension, (uncontrolled) probabilistic models converge (in some sense) to deterministic ones as the noise decreases to zero. See e.g. \cite{oksendal}. The same holds for (uncontrolled) partial differential equations, when solutions to  advection-diffusion equations converge to solutions of the continuity equation as the noise parameter (also known as viscosity) goes to zero. This is the basic idea of the {\it vanishing viscosity method}, see \cite{BianchiniBressan, Kruzkov}. Observe that, in general, for the partial differential equation with diffusion term, stronger regularity of solution is ensured and better computational methods are available. Moreover, standard numerical methods for non-local transport equations (like for example Lax-Friedrichs scheme, see \cite{numerico}) introduce the so-called numerical viscosity, which represents the numerical counterpart of adding the viscous term to the equation. It is then very desirable to be able to use methods for diffusive equations and then pass to the continuity equation by a vanishing viscosity method.

Our aim is exactly to provide a vanishing viscosity result for mean-field control problems. In our article, we deal with two optimal control problems, corresponding to the deterministic and probabilistic settings. On one side, the deterministic optimal control problem is 

\begin{pschema}{Problem $\Pp$}
Find
\begin{equation*}
    \min_{u\in\mathcal{A}}J(\mu,u),
\end{equation*}
where the cost $J$ is 
\begin{equation}\label{def:costo}
J(\mu,u):=\displaystyle \int_0^T\int_{\R^d} \left(f(t,x,\mu_t)+\psi(u(t,x))\right)\mu_t(\de x) \de t +\int_{\R^d}g(x,\mu_T)\mu_T(\de x),
\end{equation}
and $\mu\in C([0,T];\Pn)$ is a solution of 
\begin{equation}\label{eq:c}
\begin{cases}
\partial_t \mu_t+\dive\left[(b(t,x,\mu_t)+u(t,x))\mu_t\right]=0,\\
\mu|_{t=0}=\mu_0
\end{cases}
\end{equation}
with initial state $\mu_0\in\Pn$ with compact support.

The set of admissible controls is
$$
\mathcal{A}:=L^\infty((0,T);L^1(\R^d, U;\de \mu_t)),
$$
where $U\subset\R^d$. 
\end{pschema}

We now add a viscosity term on the right hand side of \eqref{eq:c}, with $\e>0$ being the diffusion parameter, connected to the viscosity of the system. Then, we consider the following problem:

\begin{pschema}{Problem $\Ppe$}
Take $\Pp$ and replace $\mu$ solution of \eqref{eq:c} with $\mu^\e$ solution of

\begin{equation}\label{eq:ad}
\begin{cases}
\partial_t \mu_t^\e+\dive\left[(b(t,x,\mu_t^\e)+u(t,x))\mu_t^\e\right]=\e \Delta \mu_t^\e,\\
\mu^\e|_{t=0}=\mu_0.
\end{cases}
\end{equation}
Replace the set of admissible controls $\mathcal{A}$ with 
$$
\mathcal{A}^\e:=L^\infty((0,T);L^1(\R^d, U;\de \mu^\e_t)).
$$
\end{pschema}

Under natural hypotheses, both solutions $(\mu,u)$ of the deterministic problem $\Pp$ and solutions $(\mu^\e,u^\e)$ of the probabilistic problem $\Ppe$ exist. In this framework, the natural questions about vanishing viscosity are the following:
 \begin{itemize}
     \item Do we have convergence of optimal controls $u^\e\to u$?
     \item Do we have convergence of optimal trajectories $\mu^\e\to \mu$?
     \item Do we have convergence of costs $J(\mu^\e,u^\e)\to J(\mu,u)$?
 \end{itemize}
 
Such questions do not have a general answer. Our main result states that, under quite natural hypotheses, all answers are positive. 



\begin{thm}\label{thm:main}
Assume the following:
\begin{itemize}
    \item the set of admissible control values $U\subset\R^d$ is convex and compact;
    \item the vector field $b$ is $C^{1,1}$ regular, i.e. {\bf Assumption (B)} in Section \ref{s-assB} below holds; 
    \item the functions $f,\psi,g$ in $J$ are $C^{1,1}$ regular, i.e. {\bf Assumption (J)} in Section \ref{s-assJ} below holds;
    \item the function $\psi$ is $\lambda$-convex, for some $\lambda>0$, 
    i.e. {\bf Assumption (C)} in Section \ref{s-assJ} below holds.
\end{itemize}
Then, there exists $\Lambda>0$, which depends only on the final time $T$ and the Lipschitz constant of the functions $b, f, g$, such that if $\lambda>\Lambda$ there exist:
\begin{itemize}
    \item a unique solution $(\mu^\e,u^\e)\in C([0,T];\Pn)\times L^\infty((0,T);\Lip(\R^d,U))$ of $\Ppe$, for each $\e>0$,
    \item a solution $(\mu,u)\in C([0,T];\Pn)\times L^\infty((0,T);\Lip(\R^d,U))$ of $\Pp$,
\end{itemize}
such that, up to a sub-sequence which we do not relabel, the following convergences hold:
\begin{itemize}
\item[$(i)$] $u^\e\weakto u$ in  $L^2((0,T);W^{1,p}_\mathrm{loc}(\R^d,U))$ for every $1\leq p<\infty$;
\item[$(ii)$] $\mu^\e\to \mu$ in $C([0,T],\Pn)$;
\item[$(iii)$] $J(\mu,u)\leq \liminf_{\e\to 0}J(\mu^\e,u^\e)$.
\end{itemize}
\end{thm}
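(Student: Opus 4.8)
The plan is to settle the viscous problem $\Ppe$ first, using its stochastic nature, then to derive bounds uniform in the viscosity $\e$, and finally to pass to the limit $\e\to0$ by compactness, checking that neither mass nor optimality is lost. I would read \eqref{eq:ad} as the Fokker--Planck equation of the McKean--Vlasov SDE $\de X_t=(b(t,X_t,\mu_t^\e)+u(t,X_t))\,\de t+\sqrt{2\e}\,\de W_t$, $\mu_t^\e=\Law(X_t)$, so that $\Ppe$ is a stochastic (mean-field-type) optimal control problem. Existence of an optimizer follows from the direct method: a minimizing sequence of controls is bounded since $U$ is compact, the associated laws are tight by the linear growth in Assumption (B) and the boundedness of $U$, and the cost is sequentially lower semicontinuous by the $C^{1,1}_{\mathrm{loc}}$ regularity and the convexity in Assumptions (J), (C). The essential point --- and what I expect to be the main obstacle --- is the space regularity of this optimizer, with constants that do not degenerate as $\e\to0$.

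To obtain it, I would work with the feedback form of the optimal control, $u^\e(t,x)=\arg\min_{w\in U}\{\psi(w)+w\cdot p^\e(t,x)\}$, where $p^\e$ is the costate of the stochastic control problem. Since $\psi$ is $\lambda$-convex, this minimizer is a single-valued function of $p^\e$, Lipschitz with constant $\le 1/\lambda$; parabolic estimates for the equation solved by $p^\e$ give a Lipschitz-in-$x$ bound on $p^\e$; and propagating the Lipschitz constants of $u^\e$, of the forward characteristics, and of $p^\e$ through the coupled forward--backward system produces a Gr\"onwall estimate that closes precisely when $\lambda>\Lambda(T,L)$. The same contraction estimate makes $u\mapsto J(\mu^\e,u)$ strictly convex on $\mathcal A^\e$, hence the optimizer is unique, and it gives $u^\e\in L^\infty((0,T);\Lip(\R^d,U))$ with Lipschitz constant independent of $\e$. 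From the SDE representation and Assumption (B), $\sup_{t}\int_{\R^d}|x|^2\,\mu_t^\e(\de x)$ (and a higher moment, for uniform integrability) is bounded uniformly for $\e\in(0,1]$, the diffusion contributing only an $O(\e)$ term, while It\^o's formula on test functions gives a uniform $1/2$-H\"older bound for $t\mapsto\mu_t^\e$ in $W_2$; hence $\{\mu^\e\}$ is relatively compact in $C([0,T];\Pn)$.

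Passing to a subsequence, $u^\e\weakto u$ as in $(i)$; a uniform bound on $\partial_t p^\e$ --- the term $\e\Delta p^\e$ being only $O(\e)$ in $H^{-1}_{\mathrm{loc}}$ --- lets an Aubin--Lions argument improve this to $p^\e\to p$, hence $u^\e\to u$, strongly in $L^2_{\mathrm{loc}}$, and, together with the uniform Lipschitz-in-$x$ bound, in $L^2((0,T);C_{\mathrm{loc}}(\R^d))$, with $u\in L^\infty((0,T);\Lip(\R^d,U))$. I would then pass to the limit along characteristics: $\mu_t^\e=\Law(X_t^\e)$, and a Gr\"onwall estimate for $\E|X_t^\e-X_t|^2$ --- using the Lipschitz dependence of $b+u$ on $(x,\mu)$, the convergence $u^\e\to u$, and the $O(\e)$ size of the Brownian term --- shows that $X^\e$ converges to the (deterministic) McKean--Vlasov flow $X$ driven by $b+u$; hence $\mu^\e\to\mu:=\Law(X)$ in $C([0,T];\Pn)$, which is $(ii)$, and $(\mu,u)$ solves \eqref{eq:c} and is admissible for $\Pp$. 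For $(iii)$, the terms involving $f$ and $g$ converge by continuity and the uniform moment bounds, and $\int_0^T\!\!\int_{\R^d}\psi(u^\e)\,\mu_t^\e(\de x)\,\de t\to\int_0^T\!\!\int_{\R^d}\psi(u)\,\mu_t(\de x)\,\de t$ by the convergences of $u^\e$ and $\mu^\e$; thus $J(\mu,u)\le\liminf_{\e\to0}J(\mu^\e,u^\e)$.

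It remains to show $(\mu,u)$ is optimal for $\Pp$. Take any admissible $(\nu,v)$ for $\Pp$; after a standard reduction one may assume $v$ continuous, so that $\psi\circ v$ is continuous. Extending $v$ to a bounded field and letting $\nu^\e$ be a weak solution of \eqref{eq:ad} with control $v$, vanishing-viscosity stability for the linear advection--diffusion equation gives $\nu^\e\to\nu$ in $C([0,T];\Pn)$, hence $J(\nu^\e,v)\to J(\nu,v)$. Optimality of $(\mu^\e,u^\e)$ gives $J(\mu^\e,u^\e)\le J(\nu^\e,v)$, and taking $\liminf_{\e\to0}$ together with $(iii)$ yields $J(\mu,u)\le J(\nu,v)$; since $(\nu,v)$ is arbitrary, $(\mu,u)$ solves $\Pp$ (in particular $\Pp$ admits a minimizer). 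Finally, the strict-convexity estimate established above, uniform in $\e$ and stable as $\e\to0$ when $\lambda>\Lambda(T,L)$, gives uniqueness of the minimizer of $\Pp$, so the entire family $(\mu^\e,u^\e)$ converges, not merely a subsequence.
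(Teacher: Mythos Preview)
Your architecture matches the paper's: reformulate $\Ppe$ stochastically, extract the optimal control in feedback form $u^\e=\hat\alpha(p^\e)$ with $\hat\alpha$ Lipschitz by $\lambda$-convexity, obtain $\e$-uniform Lipschitz bounds on the costate, pass to the limit by compactness, and conclude optimality of the limit by comparison. The paper, however, implements the key regularity step quite differently. It does not rely on ``parabolic estimates for the equation solved by $p^\e$'' at all; instead it builds the decoupling field $\U^\e(t,x,\mu)$ directly from the McKean--Vlasov forward--backward SDE, proving well-posedness by the continuation method (Lemmas~\ref{lem:stabilityTheta}--\ref{lem:induction}, Theorem~\ref{thm:well-posedness fb-sde}) and reading off the uniform Lipschitz constant from the stochastic stability estimate (Lemma~\ref{lem:esistenza_master-field}). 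This avoids a delicate point you gloss over: in the mean-field setting the costate does not satisfy a standard backward parabolic equation but a master-type equation carrying $\partial_\mu$-terms, so ``parabolic estimates'' is not a sentence one can cash in without substantial extra work. Similarly, the paper obtains existence and uniqueness of the $\Ppe$ optimizer not by the direct method but by Theorem~\ref{teo:ott-st-suff} (a Pontryagin-type sufficient condition whose proof is exactly where $\lambda>\Lambda(T,L)$ enters), and then identifies the feedback form in Lemma~\ref{lem:def-ottimo-viscoso}.

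For the passage to the limit, the paper also takes a lighter route than yours. It extracts only \emph{weak} convergence $u^\e\weakto u$ in $L^2((0,T);W^{1,p}_{\mathrm{loc}})$ (Corollary~\ref{lem:esistenza_controllo}), then passes to the limit in the \emph{distributional} formulation of \eqref{eq:ad} using Kantorovich--Rubinstein duality and the uniform Lipschitz bound on $u^\e$ (Lemma~\ref{lem:conv_soluzioni}); for the cost it uses convexity of $\psi$ to get weak lower semicontinuity of $u\mapsto\int\psi(u)\,\mu_t$ (Lemma~\ref{lem:convergenza_costo}). Your Aubin--Lions step to upgrade to strong convergence of $p^\e$, and then your Gr\"onwall argument on $\E|X_t^\e-X_t|^2$, are not needed---and the Aubin--Lions step is the most fragile point of your sketch, since bounding $\partial_t p^\e$ again presupposes identifying the (nonlocal) equation it satisfies. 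Finally, note that the theorem asserts uniqueness only for $\Ppe$, not for $\Pp$; the paper proves only existence of a minimizer of $\Pp$, and full-sequence convergence of $\mu^\e$ comes from uniqueness of the solution of \eqref{eq:c} with the limit control $u$, not from uniqueness of the optimal control of $\Pp$.
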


\begin{rem}
We will see that $\Lambda=O(T)$ and in particular is small for $T$ small. This implies that, at least for small times, the condition $\lambda>\Lambda$ is always satisfied, see Remark \ref{rem:tempi-piccoli} below.
\end{rem}

\begin{rem}
The hypothesis of the theorem are certainly not minimal and they can be relaxed with technicalities. For example, compactness of the support of $\mu_0$ or growth conditions of $f$ and $g$ can be relax, yet we keep them to make the presentation easier. Moreover, beside standard regularity hypotheses, the most interesting and crucial requirement is certainly the strict convexity of the control cost $\psi$. It is worth to note that if a convexity assumption on $f$ and $g$ holds, then the value of $\Lambda$ decreases; we will provide further comments in Remark \ref{rem:convexityfg} below. 
\end{rem}

We introduce in Section 3 a Stochastic Optimal Control problem, called (SOC), as the counterpart of the problem $\Ppe$. The assumption on the constant $\lambda$ guarantees that the contribution of $\psi$ for (SOC) is strongly convex, providing a sufficient condition for optimality for (SOC). 
The latter allows us to construct the optimal control for $\Ppe$. However, one might also ask under which assumptions we can prove the convergence of the vanishing viscosity method if an optimal control of $\Ppe$ is a-priori assigned. We have the following result.

\begin{cor}\label{main-cor}
Assume that there exists an optimal pair $(\mu^{\e},u^{\e})$ for $(\mathcal{P}_{\e})$ and that the following assumptions hold:
\begin{itemize}
\item the set of admissible control values $U\subset\R^d$ is convex and compact;
\item the vector field $b$ is $C^{1,1}$ regular, i.e. {\bf Assumption (B)} in Section \ref{s-assB} below holds; 
\item the functions $f,\psi,g$ in $J$ are $C^{1,1}$ regular, i.e. {\bf Assumption (J)} in Section \ref{s-assJ} below holds;
\item  the function $\psi$ is $\lambda$-convex, 
i.e. {\bf Assumption (C)} in Section \ref{s-assJ} below holds.
\end{itemize}
Then, $(\mu^{\e},u^{\e})$ is the unique optimal pair for $(\mathcal{P}_{\e})$. Moreover, there exists $0<\Lambda'<\Lambda$, with $\Lambda$ provided by Theorem \ref{thm:main}, which depends only on the final time $T$ and the Lipschitz constant of the functions $b, f, g$, such that if $\lambda>\Lambda'$ then there exists a solution $(\mu,u)\in C([0,T];\Pn)\times L^\infty((0,T);\Lip(\R^d,U))$ of $\Pp$ and, up to a sub-sequence which we do not relabel, the following convergence reusults hold:
\begin{itemize}
\item[$(i)$] $u^{\e}\weakto u$ in  $L^2((0,T);W^{1,p}_\mathrm{loc}(\R^d,U))$ for every $1\leq p<\infty$;
\item[$(ii)$] $\mu^{\e}\to \mu$ in $C([0,T],\Pn)$;
\item[$(iii)$] $J(\mu,u)\leq \liminf_{n\to\infty}J(\mu^{\e},u^{\e})$.
\end{itemize}
\end{cor}

\begin{rem}
Our proof of existence of an optimal pair $(\mu,u)$ for of $\Pp$ and the convergence $(i)$ in the statements of the Theorem \ref{thm:main} and Corollary \ref{main-cor} are consequence of a compactness argument applied to the sequence $u^\e$, namely Lemma \ref{lem:comp-lip-maps} below. We cannot rule out the following phenomenon: one might have several minimizers for $\Pp$, and some among them might be the limits of sub-sequences of $u^\e$. 
\end{rem}

A central aspect of our proof is to show the uniform-in-$\e$ Lipschitz-continuity of the optimal control $u^\e$, with respect to the space variable. The proof exploits in a crucial way the relation between $u^\e$ and the adjoint solution of the mean-field forward-backward stochastic differential equation arising from (SOC). Typically, optimality conditions for minimizers can take the form of a system of partial differential equations coupling a Hamilton-Jacobi equation and the optimal control is given by a function of the derivative of the solution to the Hamilton-Jacobi equation. The difficulty is that, in general, the solution of a first order Hamilton-Jacobi equation is at most Lipschitz continuous: shocks of the gradient develop in finite time. Therefore, the optimal control can present discontinuities.

In order to analyze (SOC), we will use the techniques developed in \cite{Carmona2015}. It is worth to note that this kind of approach was also used in \cite{CCDelarue14}, where the authors prove the small time existence of smooth solutions of the master equation. In particular, they study the differentiability with respect to the initial condition of the flow generated by a forward-backward stochastic system of McKean-Vlasov type and they prove that the decoupling field generated by the forward-backward system is a classical solution of the corresponding master equation. The small time existence of a strong solution of the master equation, originating from the theory of Mean Field Games, has been proved also in \cite{Gangbo2015}, providing also a more rigorous connection between the master equation and the Mean Field Games equations. More recently, global in time solutions to the master equations for potential Mean Field Games are constructed in \cite{GangboMeszaros} for a class of Lagrangians and initial data which are displacement convex. Under sufficient regularity assumptions, solutions of mean-field control problems coincide with the equilibria of an associated the mean-field game. These equilibria are in turn given by solutions of the master equation. Then one might ask whether the results of \cite{GangboMeszaros} can directly guarantee the existence of solutions to the Problem $\Pp$. This is indeed a very delicate problem which is not yet covered in literature (to the best of our knowledge).

Finally, In the context of mean-field games, problems of a similar nature are addressed in \cite{Cardaliaguet2015}: the authors consider a class of mean-field games with local coupling and provide a stability analysis with respect to the diffusion term, which covers the case of vanishing viscosity. However, the system analyzed in \cite{Cardaliaguet2015} does not derive from a mean-field optimal control problem, see \cite{bensoussan1}, and therefore does not cover our result. We think that it is an interesting problem to understand whether it is possible to generalize the results of \cite{Cardaliaguet2015} for the systems derived in \cite{bensoussan1}, thus providing a different approach to the vanishing viscosity method.\\

The structure of the article is the following. In Section 2 we introduce some standard tools from analysis in the space of probability measures. Moreover, we recall existence and uniqueness results for the non-local continuity \eqref{eq:c} and advection-diffusion \eqref{eq:ad} equations. In Section 3 we define and solve a class of mean-field stochastic optimal control problems, which is closely related to our original problems $\Pp$ and $\Ppe$. Indeed, the results of Section 3 will provide the main building blocks for the proof of our main theorem, which will be given in Section 4. Finally, in Section 5 we will also show an example in which the vanishing viscosity limit does not hold if we drop the strict convexity assumption on the control cost $\psi$.

\section{Notations and preliminaries}
In this section, we fix notations and recall some notions of analysis in the space of probability measures, Wasserstein spaces, and non-local continuity equations.
\subsection{The Wasserstein distance}
We denote by $\mathcal{M}(\R^d)$ the set of measures on $\R^d$ and by $\Pb(\R^d)$ the subset of probability measures. The set of probability measures with compact support is denoted by $\Pc$, while $\Pac(\R^d)$ denotes the set of probability measures which are absolutely continuous with respect to the $d$-dimensional Lebesgue measure $\mathscr{L}^d$. We also define $\Pac_c(\R^d):=\Pac\cap \Pc$.

We say that a sequence $\{\mu^n\}_{n\in\N}\subset\Pb(\R^d)$  converges in the sense of measures towards $\mu\in\Pb(\R^d)$, denoted by $\mu^n\weaktos\mu$, provided that
\begin{equation}
    \lim_{n}\int_{\R^d}\phi(x)\mu^n(\de x)=\int_{\R^d}\phi(x)\mu(\de x),\,\hspace{0.4cm}\mbox{for all }\phi\in C^\infty_c(\R^d).
\end{equation}

The space $\Pb(\R^d)$ is equipped with the topology of the convergence of measures. For a given $p\geq 1$, we denote by $\Pb_p(\R^d)$ the set of probability measures with finite $p$-th moment $M_p$, which is defined as
\begin{equation}\label{def:pth-moment}
    M_p(\mu):=\int_{\R^d}|x|^p\mu(\de x).
\end{equation}
\begin{defn}
Let $\mu,\nu\in \Pb_p(\R^d)$. We say that $\gamma\in\mathscr{P}(\R^{2d})$ is a transport plan between $\mu$ and $\nu$ provided that $\gamma(A \times \R^d) = \mu(A)$ and $\gamma(\R^d \times B) = \nu(B)$ for any pair of Borel sets $A,B \subset \R^d$. We denote with $\Pi(\mu,\nu)$ the set of such transference plans.
\end{defn}
With these notations, we now introduce the Wasserstein distance in the space $\Pb_p(\R^d)$.
\begin{defn}
Given $p\geq 1$ and two measures $\mu,\nu\in \Pb_p(\R^d)$, the $p\,$-Wasserstein distance between $\mu$ and $\nu$ is 
\begin{equation}
W_p(\mu,\nu):=\inf_{\gamma\in \Pi(\mu,\nu)}\left\{ \int_{\R^d\times\R^d} |x-y|^p\,\gamma(\de x,\de y) \right\}^{1/p}.
\end{equation}
\end{defn}

We recall that the Wasserstein distance metrizes the weak-$*$ topology of probability measures; in particular the following holds, see \cite{villani,villani1}.

\begin{prop}\label{prop:conv_w}
The Wasserstein space $(\Pb_p(\R^d),W_p)$ is a complete and separable metric space. Moreover, for a given $\mu\in\Pb_p(\R^d)$ and a sequence of measures in $\mu^n\in\Pb_p(\R^d)$,  the following conditions are equivalent:
\begin{itemize}
    \item $W_p(\mu,\mu^n)\to 0$, as $n\to\infty$,
    \item $\mu^n\weaktos\mu$ and $\displaystyle\int_{\R^d}|x|^p\mu^n(\de x)\to \int_{\R^d}|x|^p\mu(\de x)$,
    \item $\mu^n\weaktos\mu$ and $\displaystyle\int_{B_R^c}|x|^p\mu^n(\de x)\to 0$ as $R\to \infty$ uniformly in $n$.
\end{itemize}
\end{prop}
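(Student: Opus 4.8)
The statement collects three classical facts about the Wasserstein space, and I would prove them in the natural order: first completeness and separability, then the chain of equivalences. For the most part these follow from compactness (tightness) arguments combined with the dual/variational structure of $W_p$, so the plan is to assemble standard estimates rather than invent new machinery.

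\emph{Completeness.} Given a Cauchy sequence $\{\mu^n\}$ in $(\Pb_p(\R^d),W_p)$, I would first observe that it is Cauchy, hence bounded, in $W_p$; since $W_1\leq W_p$, it is also Cauchy in $W_1$, and a $W_p$-bounded sequence has uniformly bounded $p$-th moments. Uniformly bounded $p$-th moments with $p\geq 1$ force tightness (by Markov's inequality, $\mu^n(B_R^c)\leq M_p(\mu^n)/R^p$ uniformly in $n$), so by Prokhorov's theorem a subsequence converges weakly-$*$ to some $\mu\in\Pb(\R^d)$. I would then upgrade weak-$*$ convergence of the subsequence to $W_p$-convergence by using the lower semicontinuity of $W_p$ under weak-$*$ convergence together with the Cauchy property: for fixed large $m$, $W_p(\mu^n,\mu^m)$ is small, and letting $n\to\infty$ along the subsequence and using lower semicontinuity gives $W_p(\mu,\mu^m)\leq\liminf_n W_p(\mu^n,\mu^m)$, which is small. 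This shows the subsequence converges to $\mu$ in $W_p$; a standard Cauchy argument then promotes this to convergence of the full sequence, and in particular $M_p(\mu)<\infty$, so $\mu\in\Pb_p(\R^d)$.

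\emph{Separability.} For separability I would exhibit an explicit countable dense set: finite convex combinations $\sum_i q_i\delta_{x_i}$ with rational weights $q_i\in\Q_{\geq0}$, $\sum_i q_i=1$, and centers $x_i\in\Q^d$. To see density, I would first approximate an arbitrary $\mu\in\Pb_p(\R^d)$ by a compactly supported measure (truncate outside a large ball and renormalize, controlling the $W_p$-error via the $p$-th moment tail, which is small by the finiteness of $M_p(\mu)$), then partition a large cube containing the support into small rational subcubes and push mass to a rational center of each cube, so that the transport cost is at most the diameter of the cubes in the $p$-th power; finally round the resulting masses to rationals. Each step contributes an arbitrarily small $W_p$-error, giving density.

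\emph{The equivalences.} The implication from $W_p(\mu,\mu^n)\to0$ to weak-$*$ convergence plus convergence of $p$-th moments is the ``easy'' direction: weak-$*$ convergence follows because $W_1\leq W_p\to0$ metrizes weak-$*$ convergence on sets of bounded first moment, and convergence of $p$-th moments follows from the continuity of $\mu\mapsto M_p(\mu)^{1/p}=W_p(\mu,\delta_0)$ under $W_p$. The equivalence of the second and third bullets is a uniform-integrability reformulation: given $\mu^n\weaktos\mu$, convergence of the full $p$-th moments is equivalent to uniform smallness of the tails $\int_{B_R^c}|x|^p\,\de\mu^n$, which I would prove by splitting $\int|x|^p$ at radius $R$, using weak-$*$ convergence on the bounded part and the uniform tail control on the complement. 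The genuinely substantive direction is the converse: from weak-$*$ convergence together with convergence of $p$-th moments, I must recover $W_p(\mu,\mu^n)\to0$. Here I would take optimal plans $\gamma^n\in\Pi(\mu^n,\mu)$, note that $\{\gamma^n\}$ is tight (its marginals are), extract a weak-$*$ limit $\gamma$, identify its marginals as $\mu$ and $\mu$ (forcing $\gamma$ to concentrate on the diagonal, so its transport cost vanishes), and then pass to the limit in $\int|x-y|^p\,\de\gamma^n$. \textbf{This last passage to the limit is the main obstacle:} the integrand $|x-y|^p$ is unbounded, so I cannot simply use weak-$*$ convergence; I would instead invoke the convergence of $p$-th moments to obtain the uniform integrability needed to pass to the limit (or, equivalently, a truncation-plus-uniform-tail argument), concluding $\limsup_n W_p^p(\mu^n,\mu)\leq\int|x-y|^p\,\de\gamma=0$. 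Since all of this is classical, for the actual write-up I would cite \cite{villani,villani1} for the bulk and only sketch the moment/uniform-integrability bookkeeping that drives the nontrivial implication.
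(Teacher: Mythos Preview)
Your proof sketch is correct and follows the standard route found in the references the paper cites. Note, however, that the paper does not actually supply a proof of this proposition: it is stated as a recall of well-known facts, with the sentence ``the following holds, see \cite{villani,villani1}'' preceding it and no proof environment afterward. So there is nothing to compare your argument against beyond the observation that your outline is precisely the classical one developed in those references (Prokhorov for tightness and completeness, rational Dirac combinations for separability, and the uniform-integrability/optimal-plan argument for the nontrivial implication in the equivalence). Your closing remark that you would cite \cite{villani,villani1} and only sketch the moment bookkeeping is therefore exactly in line with how the paper handles this result.
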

We recall that Wasserstein distances are {\em ordered}, in the sense that, given $\mu,\nu\in\Pb_c(\R^d)$, then
\begin{equation}
\label{p-ordered}    
W_{p_1}(\mu,\nu)\leq W_{p_2}(\mu,\nu),\qquad \mbox{~~~whenever~~~}\qquad p_1\leq p_2.
\end{equation}

We also denote with $\Lip(\phi)$ a Lipschitz constant for a function $\phi$ and with $\Lip(X,Y)$ the space of Lipschitz functions from $X$ to $Y$, as well as $\Lip(X):=\Lip(X,\R)$. We now recall the {\em Kantorovich-Rubinstein} duality formula which characterizes the distance $W_1$, see \cite{villani1}.
\begin{lem}\label{lem:Kant-Rub}
Let $\mu,\nu\in\Pb_1(\R^d)$. Then
\begin{equation}
    W_1(\mu,\nu)=\sup_{\phi\in\Lip(\R^d)}\left\{\int_{\R^d}\phi(x) (\mu-\nu)(\de x):\,\,\Lip(\phi)\leq 1  \right\}.
\end{equation}
\end{lem}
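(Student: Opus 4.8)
The plan is to prove the two inequalities separately, the first being elementary and the second resting on the general Kantorovich duality theorem recalled in \cite{villani1}.

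First I would establish that the supremum is $\leq W_1(\mu,\nu)$. Fix any $\phi\in\Lip(\R^d)$ with $\Lip(\phi)\leq 1$; since $\mu,\nu\in\Pb_1(\R^d)$ and $|\phi(x)|\leq|\phi(0)|+|x|$, the integral $\int_{\R^d}\phi\,(\mu-\nu)(\de x)$ is well defined and finite. For an arbitrary transport plan $\gamma\in\Pi(\mu,\nu)$, the marginal conditions of the definition give
$$
\int_{\R^d}\phi\,(\mu-\nu)(\de x)=\int_{\R^d\times\R^d}\bigl(\phi(x)-\phi(y)\bigr)\,\gamma(\de x,\de y)\leq\int_{\R^d\times\R^d}|x-y|\,\gamma(\de x,\de y),
$$
where the last step uses $\Lip(\phi)\leq 1$. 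Taking the infimum over $\gamma\in\Pi(\mu,\nu)$ yields $\int_{\R^d}\phi\,(\mu-\nu)(\de x)\leq W_1(\mu,\nu)$, and then the supremum over all admissible $\phi$ gives this inequality.

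For the reverse inequality I would invoke the general Kantorovich duality for the (continuous, hence lower semicontinuous) cost $c(x,y)=|x-y|$, namely
$$
W_1(\mu,\nu)=\sup_{(\phi,\psi)}\left\{\int_{\R^d}\phi\,\mu(\de x)+\int_{\R^d}\psi\,\nu(\de y):\ \phi(x)+\psi(y)\leq|x-y|\ \text{ for all } x,y\in\R^d\right\},
$$
and then reduce the admissible pairs to antisymmetric $1$-Lipschitz potentials. Given an admissible pair $(\phi,\psi)$, I would pass to the $c$-transform $\phi^c(y):=\inf_{x}\bigl(|x-y|-\phi(x)\bigr)$. As an infimum of the $1$-Lipschitz maps $y\mapsto|x-y|-\phi(x)$, the function $\phi^c$ is itself $1$-Lipschitz; moreover $\phi^c\geq\psi$ pointwise (by admissibility) and the pair $(\phi,\phi^c)$ is again admissible, so replacing $\psi$ by $\phi^c$ does not decrease the objective. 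Iterating once more with $\phi^{cc}(x):=\inf_y\bigl(|x-y|-\phi^c(y)\bigr)$, the Lipschitz bound $|x-y|-\phi^c(y)\geq-\phi^c(x)$, which holds for all $y$ with equality at $y=x$, forces $\phi^{cc}=-\phi^c$. Hence the dual supremum is attained along pairs of the form $(-u,u)$ with $u$ $1$-Lipschitz, and the objective becomes $\int_{\R^d}u\,(\nu-\mu)(\de x)$. Renaming $\phi=-u$, which is still $1$-Lipschitz, identifies this with the right-hand side of the claimed formula and closes the chain of equalities.

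The genuinely substantial input here is the general Kantorovich duality theorem; since it is classical and available in the cited references, I would state it rather than reprove it, so the only step demanding care is the $c$-transform reduction. The point is that the \emph{metric} structure of the cost is what makes $\phi^c$ automatically $1$-Lipschitz and collapses the double transform to $-\phi^c$; this is precisely the feature that makes the general dual coincide with the Lipschitz supremum, and it would fail for a generic cost. I would also keep an eye on integrability and growth—every $1$-Lipschitz potential has at most linear growth and is thus $(\mu-\nu)$-integrable exactly because $\mu,\nu\in\Pb_1(\R^d)$—and note that the normalization $\phi(0)=0$ is harmless since $\mu$ and $\nu$ are both probability measures.
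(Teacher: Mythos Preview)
Your argument is correct and follows the standard route: the easy inequality via disintegration against any transport plan, then Kantorovich duality followed by the $c$-transform reduction that collapses admissible pairs to antisymmetric $1$-Lipschitz potentials. The only minor omission is that after replacing $\psi$ by $\phi^c$ you should also note $\phi^{cc}\geq\phi$ (which you use implicitly when ``iterating once more''), but this is immediate from admissibility of $(\phi,\phi^c)$.

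As for comparison with the paper: there is nothing to compare. The paper does not prove this lemma; it is stated as a recalled fact with a reference to \cite{villani1}. Your write-up is essentially the proof one finds in that reference, so in spirit you are aligned with the paper's intent, just more explicit.
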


\subsection{The L-derivative}
We now recall some results of differential calculus in the space of probability measures. Unless otherwise specified, all definitions and the results are taken from \cite{CarmonaDelarueI}. In particular, we choose a notion of derivative of a functional with respect to a measure, that suits our purposes. We recall that there are several different definitions of derivatives with respect to measures, see e.g. \cite{CarmonaDelarueI}. For our purpose, we need the so-called \emph{L-derivative}. Let $(\Omega,\mathcal{F},\mathbb{P})$ be an atomless probability space, where atomless means that for any $A\in \mathcal{F}$ with $\mathbb{P}(A)>0$ there exists $B\in \mathcal{F}$ such that $0< \mathbb{P}(B)< \mathbb{P}(A)$. 
\begin{defn}
Let $X:\Omega\to\R^d$ be a random variable. We define the {\em law} of $X$ the measure defined as $\Law(X)(B):=\mathbb{P}(X^{-1}(B))$, for any Borel set $B\subset\R^d$.
\end{defn}
The following proposition holds, see \cite[Proposition 9.1.11]{Bogachev}.
\begin{prop}\label{prop:w2-stoc}
Let $\mu\in \Pn$, then there exists a $\R^d$-valued random
variable $X\in L^2(\Omega;\R^d)$ with law $\Law(X)=\mu$. Moreover, if $\mu,\mu'\in\Pn$, then
$$
W_2(\mu,\mu')^2=\inf_{(X,X')}\E\left[|X-X'|^2 \right],
$$
where the infimum is taken over the pairs of $\R^d$-random variables $(X,X')$ such that $\Law(X)=\mu$ and $\Law(X')=\mu'$.
\end{prop}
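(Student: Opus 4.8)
\textbf{Proof proposal for Proposition \ref{prop:w2-stoc}.}

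The statement has two parts. The first is the existence, for any $\mu\in\Pn$, of a random variable $X\in L^2(\Omega;\R^d)$ with $\Law(X)=\mu$; the second is the identity $W_2(\mu,\mu')^2=\inf_{(X,X')}\E[|X-X'|^2]$ over couplings. Since the statement explicitly attributes the existence part to \cite[Proposition 9.1.11]{Bogachev}, I would invoke that reference for the first assertion, but for completeness I would recall the idea: because $(\Omega,\mathcal{F},\mathbb{P})$ is atomless, it is a standard fact that $\Omega$ supports a random variable uniformly distributed on $[0,1]$, and composing with the generalized inverse of the cumulative distribution function (in dimension one) or an iterated disintegration/quantile construction (in dimension $d$) produces a random variable with prescribed law $\mu$. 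The $L^2$ membership follows immediately from $\mu\in\Pn$, i.e. $\E[|X|^2]=M_2(\mu)<\infty$.

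For the identity, I would argue by establishing the two inequalities separately, exploiting the correspondence between transport plans and joint laws of random vectors. \textbf{First inequality (}$\leq$\textbf{).} Given any pair $(X,X')$ on $(\Omega,\mathcal{F},\mathbb{P})$ with $\Law(X)=\mu$ and $\Law(X')=\mu'$, the law $\gamma:=\Law(X,X')\in\mathscr{P}(\R^{2d})$ of the joint vector has marginals $\mu$ and $\mu'$, hence $\gamma\in\Pi(\mu,\mu')$. By the transfer/change-of-variables formula,
\begin{equation*}
\E[|X-X'|^2]=\int_{\R^d\times\R^d}|x-y|^2\,\gamma(\de x,\de y)\geq W_2(\mu,\mu')^2,
\end{equation*}
and taking the infimum over all such couplings $(X,X')$ gives $\inf_{(X,X')}\E[|X-X'|^2]\geq W_2(\mu,\mu')^2$.

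\textbf{Second inequality (}$\geq$\textbf{).} For the reverse direction I would take an optimal (or, if one prefers not to invoke existence of optimizers, a minimizing sequence of) transport plan $\gamma\in\Pi(\mu,\mu')$ and realize it as the joint law of a coupling on the given atomless space. Concretely, view $(\R^{2d},\gamma)$ together with the two coordinate projections $\pi_1,\pi_2$ as a coupling whose cost is $\int|x-y|^2\,\de\gamma$; the content is to transport this coupling onto $(\Omega,\mathcal{F},\mathbb{P})$. This is where the atomless hypothesis is essential: since $(\Omega,\mathcal{F},\mathbb{P})$ is atomless, it is Borel-isomorphic (mod $\mathbb{P}$-null sets) to a standard probability space carrying a uniform $[0,1]$ variable, which in turn supports a random vector $(X,X')$ with $\Law(X,X')=\gamma$. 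Then $\Law(X)=\mu$, $\Law(X')=\mu'$, and $\E[|X-X'|^2]=\int|x-y|^2\,\de\gamma$, so
\begin{equation*}
\inf_{(X,X')}\E[|X-X'|^2]\leq \int_{\R^d\times\R^d}|x-y|^2\,\gamma(\de x,\de y).
\end{equation*}
Taking the infimum over $\gamma\in\Pi(\mu,\mu')$ on the right-hand side yields $\inf_{(X,X')}\E[|X-X'|^2]\leq W_2(\mu,\mu')^2$. Combining the two inequalities gives the claimed identity.

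The routine direction is $(\leq)$, which is just pushing forward the coupling to its joint law and applying the definition of $W_2$. The main obstacle is the realizability step in $(\geq)$: one must produce random variables on the \emph{fixed} abstract space $(\Omega,\mathcal{F},\mathbb{P})$ whose joint law equals a prescribed plan $\gamma$. This is precisely what the atomlessness buys us — it guarantees $(\Omega,\mathcal{F},\mathbb{P})$ is rich enough to carry any Borel probability measure on $\R^{2d}$ as the law of some vector — and I would make this explicit by citing the standard isomorphism theorem for atomless (standard) probability spaces, or again by appealing to \cite{Bogachev}. If one wishes to avoid existence of an optimal plan, the same argument runs along a minimizing sequence $\gamma_n$ and passes the resulting inequality to the limit.
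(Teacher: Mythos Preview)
Your argument is correct: the two inequalities you set up are exactly the standard way of identifying $W_2$ with the infimum over random couplings, and you correctly isolate the atomlessness of $(\Omega,\mathcal{F},\mathbb{P})$ as the ingredient needed to realize an arbitrary plan $\gamma\in\Pi(\mu,\mu')$ as the joint law of a pair $(X,X')$ on the \emph{given} space.

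As for comparison with the paper: the paper does not prove this proposition at all. It is stated without proof, with a pointer to \cite[Proposition 9.1.11]{Bogachev}. So your write-up is not an alternative to the paper's argument but rather a self-contained justification of a result the authors chose to quote. What your version buys is transparency about where atomlessness enters (namely, in the realizability direction $\inf_{(X,X')}\E[|X-X'|^2]\leq W_2(\mu,\mu')^2$), which is useful since that hypothesis is introduced in the paper precisely for this purpose. If you want to keep the proof in the paper's spirit, the cleanest option is to cite Bogachev for both the existence of $X$ with prescribed law and for the realization of $\gamma$ on the atomless space, and retain only the transfer identity $\E[|X-X'|^2]=\int|x-y|^2\,\gamma(\de x,\de y)$ as the explicit step.
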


Given a map $h:\Pn\to\R$ we define the {\em lift} $\tilde{h}:L^2(\Omega;\R^d)\to\R$ in the following way
$$
\tilde{h}(X)=h(\mathcal{L}(X)),\hspace{0.3cm}\forall X\in L^2(\Omega;\R^d).
$$
Note that $\mathcal{L}(X)\in\Pn$, since $X\in L^2(\Omega;\R^d)$. We point out that $L^2(\Omega,\mathcal{F},\mathbb{P})$ is an Hilbert space, in which the notion of Fr\'echet differentiability makes sense. We thus have the following definition.
\begin{defn}
A function $h:\Pn\to\R$ is said to be {\em L-differentiable} at $\mu_0\in\Pn$ if there exists a random variable $X_0$ with law $\mu_0$ such that the lifted function $\tilde{h}$ is Fr\'echet differentiable at $X_0$.
\end{defn}

The Fr\'echet derivative of $\tilde{h}$ at $X$ can be viewed as an element of $L^2(\Omega;\R^d)$; we denote it by $D\tilde{h}(X)$. It is important to recall that L-differentiability of $h$ does not depend upon the particular choice of $X$, as explained in the following propositions, see \cite{CarmonaDelarueI}.
\begin{prop}
Let $h:\Pn\to\R$ and $\tilde{h}$ its lift. Let $X,X'\in L^2(\Omega;\R^d)$ with the same law. If $\tilde{h}$ is Fr\'echet differentiable at $X$, then $\tilde{h}$ is Fr\'echet differentiable at $X'$ and $(X,D\tilde{h}(X))$ has the same law as $(X',D\tilde{h}(X'))$.
\end{prop}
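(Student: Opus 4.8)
The plan is to exploit the single structural fact behind the whole statement: since $\tilde{h}(Y)=h(\mathcal{L}(Y))$ by definition, the lift $\tilde{h}$ depends on $Y\in L^2(\Omega;\R^d)$ only through its law, and is therefore \emph{invariant under measure-preserving transformations} of $\Omega$. Concretely, if $\tau:\Omega\to\Omega$ is a measure-preserving bijection, then $Y\circ\tau$ has the same law as $Y$ for every $Y$, so $\tilde{h}(Y\circ\tau)=\tilde{h}(Y)$; moreover the composition operator $Y\mapsto Y\circ\tau$ is a linear isometry of the Hilbert space $L^2(\Omega;\R^d)$, because $\tau$ preserves $\mathbb{P}$. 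These two properties together are exactly what is needed to carry the Fr\'echet derivative from $X$ to $X'$.

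First I would reduce the proposition to the case $X'=X\circ\tau$ for a suitable measure-preserving bijection $\tau$. This reduction is the heart of the matter and is where the atomless hypothesis on $(\Omega,\mathcal{F},\mathbb{P})$ enters: given $X,X'$ with a common law $\mu=\mathcal{L}(X)=\mathcal{L}(X')$, atomlessness makes $\Omega$ rich enough to realize $X'$ as a measure-preserving rearrangement of $X$, i.e.\ to produce $\tau$ with $X'=X\circ\tau$ a.s. I expect this step to be the main obstacle, since on a general (non-standard) atomless space such a global bijection need not exist; the robust way around it is to observe that $D\tilde{h}(X)$ is automatically $\sigma(X)$-measurable. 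Indeed, applying the transfer identity below to the measure-preserving maps $\tau$ that fix $X$ forces $D\tilde{h}(X)=D\tilde{h}(X)\circ\tau$, so that $D\tilde{h}(X)=\xi(X)$ for some Borel field $\xi:\R^d\to\R^d$, which one then identifies as a function of $\mu$ alone. I would present the bijection argument as the clean model case and indicate the measurability refinement for full generality.

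Granting $X'=X\circ\tau$, the transfer of differentiability is a direct computation from the definition. For $H\in L^2(\Omega;\R^d)$, write $H=H'\circ\tau$ with $H'=H\circ\tau^{-1}$, and use invariance to get
\begin{equation*}
\tilde{h}(X\circ\tau+H)=\tilde{h}\big((X+H')\circ\tau\big)=\tilde{h}(X+H').
\end{equation*}
Fr\'echet differentiability of $\tilde{h}$ at $X$ then yields
\begin{equation*}
\tilde{h}(X+H')=\tilde{h}(X)+\langle D\tilde{h}(X),H'\rangle_{L^2}+o(\|H'\|_{L^2}),
\end{equation*}
and since $\tau$ is measure-preserving we have $\|H'\|_{L^2}=\|H\|_{L^2}$ while the change of variables induced by $\tau$ gives $\langle D\tilde{h}(X),H'\rangle_{L^2}=\langle D\tilde{h}(X)\circ\tau,H\rangle_{L^2}$. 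Hence $\tilde{h}$ is Fr\'echet differentiable at $X'=X\circ\tau$ with $D\tilde{h}(X')=D\tilde{h}(X)\circ\tau$.

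Finally, the equality of the laws of the two pairs is immediate from this identity:
\begin{equation*}
\big(X',D\tilde{h}(X')\big)=\big(X\circ\tau,\,D\tilde{h}(X)\circ\tau\big)=\big(X,D\tilde{h}(X)\big)\circ\tau,
\end{equation*}
and composing with the measure-preserving $\tau$ leaves the joint law unchanged, so $(X,D\tilde{h}(X))$ and $(X',D\tilde{h}(X'))$ have the same law. In the measurability formulation the same conclusion reads $D\tilde{h}(X)=\xi(X)$ with $\xi$ determined by $\mu$, whence $(X,D\tilde{h}(X))=(\mathrm{id},\xi)_{\#}\mu$ depends only on $\mu$; this is precisely the assertion, and it simultaneously exhibits the Lions derivative $\partial_\mu h(\mu)=\xi$ as the underlying law-intrinsic object.
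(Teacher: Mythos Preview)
The paper does not prove this proposition; it is stated without proof and attributed to \cite{CarmonaDelarueI}. So there is no in-paper argument to compare against, and your proposal must be judged on its own.

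Your core computation is correct and is indeed the standard mechanism: once $X'=X\circ\tau$ for a measure-preserving bijection $\tau$, the invariance $\tilde h(Y\circ\tau)=\tilde h(Y)$ together with the fact that $Y\mapsto Y\circ\tau$ is a linear isometry of $L^2(\Omega;\R^d)$ transports Fr\'echet differentiability from $X$ to $X'$ and yields $D\tilde h(X')=D\tilde h(X)\circ\tau$, from which the equality of joint laws is immediate. This part is clean.

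The genuine gap is the reduction step, and your proposed workaround does not close it. Existence of a measure-preserving bijection $\tau$ with $X'=X\circ\tau$ a.s.\ is a Lebesgue-space (standard Borel, atomless) fact, not a consequence of atomlessness alone; you correctly flag this. But your fallback---use the transfer identity for $\tau$'s fixing $X$ to deduce $D\tilde h(X)=D\tilde h(X)\circ\tau$, hence $\sigma(X)$-measurability, hence $D\tilde h(X)=\xi(X)$ with $\xi$ ``a function of $\mu$ alone''---has two holes. First, the implication ``invariant under all automorphisms fixing $X$ $\Rightarrow$ $\sigma(X)$-measurable'' is itself a Lebesgue-space statement (it needs enough automorphisms to exist), so you have not escaped the standardness hypothesis. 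Second, even granting $D\tilde h(X)=\xi_X(X)$ and $D\tilde h(X')=\xi_{X'}(X')$, you still owe an argument that $\xi_X=\xi_{X'}$ $\mu$-a.e., which is precisely the comparison between $X$ and $X'$ you were trying to avoid. The usual remedy in Carmona--Delarue is different: one fixes a canonical atomless Polish space (e.g.\ $[0,1]$ with Lebesgue measure) where the bijection argument is available, proves the result there, and then shows separately that the whole construction is independent of the choice of atomless probability space. Your sketch handles the first half well but does not supply the second.
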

\begin{prop}\label{prop:equivalence_class_xi}
Let $h:\Pn\to\R^d$ be an L-differentiable function. Then, for any $\mu_0\in\Pn$ there exists a measurable function $\xi:\R^d\to\R^d$ such that for all $X\in L^2(\Omega;\R^d)$ with law $\mu_0$, it holds that $D\tilde{h}(X)=\xi(X)$ $\mu_0$-almost surely.
\end{prop}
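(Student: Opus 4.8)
The statement should read $h:\Pn\to\R$ (matching the definition of L-differentiability, so that $D\tilde h(X)\in L^2(\Omega;\R^d)$); the vector-valued case $h:\Pn\to\R^d$ then follows by applying the argument componentwise. The plan is to prove the claim in three steps: show that $D\tilde h(X)$ is measurable with respect to $\sigma(X)$, factor it through $X$ via Doob--Dynkin, and use the preceding proposition to check that the resulting factor depends only on $\mu_0$.

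\emph{Step 1: $\sigma(X)$-measurability.} Fix $X$ with $\Law(X)=\mu_0$ and set $DH:=D\tilde h(X)$. The crucial structural feature is that $\tilde h$ is invariant under measure-preserving rearrangements of $\Omega$. Let $\tau:\Omega\to\Omega$ be an invertible measure-preserving map. For any $Z\in L^2(\Omega;\R^d)$ one has $X\circ\tau+sZ=(X+s\,Z\circ\tau^{-1})\circ\tau$, and since $\tau$ preserves $\mathbb{P}$ this random variable has the same law as $X+s\,Z\circ\tau^{-1}$; as $\tilde h$ depends only on the law, $\tilde h(X\circ\tau+sZ)=\tilde h(X+s\,Z\circ\tau^{-1})$. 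Differentiating at $s=0$ and changing variables by $\tau$ inside the $L^2$ inner product, I obtain the transformation rule $D\tilde h(X\circ\tau)=D\tilde h(X)\circ\tau$. Specializing to those $\tau$ that fix $X$, i.e. $X\circ\tau=X$ almost surely, this gives $DH\circ\tau=DH$ for every such rearrangement. A square-integrable random variable invariant under all measure-preserving maps fixing $X$ must coincide with $\E[DH\mid X]$, hence be $\sigma(X)$-measurable; this is precisely where the atomlessness of $(\Omega,\mathcal F,\mathbb{P})$ enters, as it ensures that the fibers of $X$ support enough rearrangements to rule out any $\sigma(X)$-nonmeasurable variation of $DH$.

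\emph{Steps 2 and 3: factorization and independence of the representative.} Once $DH$ is $\sigma(X)$-measurable, the Doob--Dynkin lemma yields a Borel map $\xi:\R^d\to\R^d$ with $D\tilde h(X)=\xi(X)$ almost surely, so that $\xi\in L^2(\R^d,\mu_0;\R^d)$. To see that $\xi$ depends only on $\mu_0$, let $X'$ be another variable with law $\mu_0$ and let $\xi'$ be its factor, $D\tilde h(X')=\xi'(X')$. By the preceding proposition $(X,D\tilde h(X))$ and $(X',D\tilde h(X'))$ have the same law, that is, $(X,\xi(X))$ and $(X',\xi'(X'))$ coincide in law on $\R^{2d}$. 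Both laws have first marginal $\mu_0$ and are concentrated on the graphs of $\xi$ and $\xi'$ respectively, so their equality forces $\xi=\xi'$ $\mu_0$-almost everywhere. This produces the asserted $X$-independent function $\xi$.

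The main obstacle is the final reduction in Step 1, from the invariance $DH\circ\tau=DH$ to genuine $\sigma(X)$-measurability. Making this rigorous requires that $(\Omega,\mathcal F,\mathbb{P})$ be rich enough that, along almost every fiber of $X$, the conditional law admits sufficiently many measure-preserving self-maps; the atomless hypothesis is exactly what underwrites this, and some care is needed to construct these rearrangements without altering $\Law(X)$. By contrast, Steps 2 and 3 are soft, relying only on Doob--Dynkin and on the already established equality of joint laws.
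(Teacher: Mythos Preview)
The paper does not actually prove this proposition: it is stated without proof and attributed to \cite{CarmonaDelarueI}. So there is no in-paper argument to compare against. Your sketch is, in fact, essentially the argument given in that reference (originating in Lions' lectures as written up by Cardaliaguet): establish the rearrangement covariance $D\tilde h(X\circ\tau)=D\tilde h(X)\circ\tau$, deduce invariance of $D\tilde h(X)$ under measure-preserving $\tau$ with $X\circ\tau=X$, infer $\sigma(X)$-measurability, and finish with Doob--Dynkin together with the equality of the joint laws $(X,D\tilde h(X))$ and $(X',D\tilde h(X'))$.

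The only point worth flagging is the one you already isolate: the implication ``invariant under every measure-preserving $\tau$ fixing $X$ $\Rightarrow$ $\sigma(X)$-measurable'' is not automatic, and atomlessness of $(\Omega,\mathcal F,\mathbb P)$ alone does not guarantee that the conditional laws on the fibers of $X$ are atomless (think of $X$ injective). The clean way to close this---and the one used in the cited reference---is to reduce to a canonical space: since an atomless standard probability space is measure-isomorphic to $[0,1]$ with Lebesgue measure (hence to $[0,1]^2$), one may assume $\Omega=[0,1]^2$ and take $X$ depending only on the first coordinate; then measure-preserving maps acting on the second coordinate supply enough rearrangements to force $D\tilde h(X)$ to be a function of the first coordinate alone. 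With that reduction, Steps~2 and~3 go through exactly as you wrote them. Your observation that the stated codomain should be $\R$ rather than $\R^d$ is also correct.
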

We say that $h$ is continuously L-differentiable if $D\tilde{h}$ is a continuous function from the space $L^2(\Omega,\mathcal{F},\mathbb{P})$ into itself. Moreover, by Proposition \ref{prop:equivalence_class_xi}, the equivalence class of $\xi\in L^2(\R^d,\mu_0;\R^d)$ is uniquely defined; we denote it by $\partial_\mu h(\mu_0)$. We call {\em L-derivative} of $h$ at $\mu_0$ the function
$$
\partial_\mu h(\mu_0)(\cdot):x\in\R^d\mapsto\partial_\mu h(\mu_0)(x).
$$
From the above construction, it is clear that $\partial_\mu h(\mu_0)(\cdot)$ is uniquely defined only $\mu_0$-a.e.. However, if $D\tilde{h}$ is a Lipschitz function from $L^2(\Omega,\mathcal{F},\mathbb{P})$ into itself, we can define a Lipschitz continuous version of $\partial_\mu h(\mu_0)(\cdot)$. This is the content of the following proposition, see \cite{CarmonaDelarueI}.

\begin{prop}\label{prop:lip_function_measure}
Assume that $(v(\mu)(\cdot))_{\mu\in\Pn}$ is a family of Borel-measurable mappings from $\R^d$ into itself for which there exists a constant $C>0$ such that, for any pair of identically distributed square integrable random variables $\xi_1,\xi_2\in L^2(\Omega,\mathcal{F},\mathbb{P};\R^d)$ over an atomless probability space $(\Omega,\mathcal{F},\mathbb{P})$, it holds:
\begin{equation*}
\E\left[ |v(\mathcal{L}(\xi_1))(\xi_1))-v(\mathcal{L}(\xi_2))(\xi_2)|^2 \right]\leq C^2\E\left[ |\xi_1-\xi_2|^2\right].
\end{equation*}
Then, for each $\mu\in\Pn$, one can redefine $v(\mu)(\cdot)$ on a $\mu$-negligible set in such a way that:
\begin{equation}
\forall x,x'\in\R^d,\qquad\mbox{~~~ it holds ~~~}\qquad |v(\mu)(x)-v(\mu)(x')|\leq C|x-x'|,
\end{equation}
for the same $C$ as above.
\end{prop}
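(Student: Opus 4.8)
The plan is to fix $\mu\in\Pn$ once and for all and to extract, out of the $L^2$‑Lipschitz bound on the lift, a genuinely $C$‑Lipschitz representative of $v(\mu)(\cdot)$, with $C$ independent of $\mu$. The first move is to rephrase the hypothesis as a statement about couplings: for every $\gamma\in\Pi(\mu,\mu)$,
$$
\int_{\R^{2d}}|v(\mu)(x)-v(\mu)(y)|^2\,\gamma(\de x,\de y)\ \le\ C^2\int_{\R^{2d}}|x-y|^2\,\gamma(\de x,\de y).
$$
Indeed, on the atomless probability space $(\R^{2d}\times[0,1],\gamma\otimes\mathscr{L}^1)$ the coordinate maps $\xi_1(x,y,s)=x$ and $\xi_2(x,y,s)=y$ are square integrable (because $\mu\in\Pn$), have common law $\mu$, and joint law $\gamma$, so the assumed inequality applies to them. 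One also records that the hypothesis already forces $v(\mu)\in L^2(\R^d,\mu;\R^d)$ (test with two independent copies of $\mu$ and use that if $U_1,U_2$ are i.i.d.\ with $U_1-U_2\in L^2$ then $U_1\in L^2$); in the application to $\partial_\mu h$ this integrability is in any case built in.

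The core of the argument is to feed this coupling inequality \emph{localized} plans that isolate two points. Given $x_0\neq y_0$ in $\supp\mu$ and $0<r<\tfrac12|x_0-y_0|$, put $A=B(x_0,r)$, $B=B(y_0,r)$ (disjoint, with $a:=\mu(A)>0$, $b:=\mu(B)>0$), write $\mu_A$ for the measure $E\mapsto\mu(E\cap A)$ and similarly $\mu_B$, and set $m:=\min(a,b)$ and
$$
\gamma_r:=\sigma+\sigma^{\mathsf T}+(\mathrm{id},\mathrm{id})_\#\eta,\qquad
\sigma:=\tfrac{m}{ab}\,\mu_A\otimes\mu_B,\qquad
\eta:=\mu-\tfrac{m}{a}\mu_A-\tfrac{m}{b}\mu_B\ \ge\ 0,
$$
with $\sigma^{\mathsf T}$ the measure $\sigma$ with its two coordinates interchanged; a marginal check gives $\gamma_r\in\Pi(\mu,\mu)$. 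Plugging $\gamma_r$ into the coupling inequality, the diagonal piece $(\mathrm{id},\mathrm{id})_\#\eta$ contributes $0$ to both sides, the swapped copy $\sigma^{\mathsf T}$ merely doubles everything (the two integrands are symmetric in $(x,y)$), and the common prefactor $\tfrac{2m}{ab}$ cancels, leaving the clean averaged estimate
$$
\dashint_{A}\dashint_{B}|v(\mu)(x)-v(\mu)(y)|^2\,\mu(\de x)\,\mu(\de y)\ \le\ C^2\dashint_{A}\dashint_{B}|x-y|^2\,\mu(\de x)\,\mu(\de y),
$$
$\dashint$ denoting the $\mu$‑average. Using \emph{equal radii} for $A$ and $B$ is precisely what makes the mass factor disappear; swapping instead through a measure‑preserving bijection between two balls of equal $\mu$‑mass would force a comparison of balls of incomparable radii and wreck the differentiation step below.

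To finish I let $r\to0$. The function $F(x,y):=|v(\mu)(x)-v(\mu)(y)|^2$ lies in $L^1(\mu\otimes\mu)$, the product $\mu\otimes\mu$ is a Radon measure on $\R^{2d}$, and the sets $A\times B=B(x_0,r)\times B(y_0,r)$ are balls for the sup‑norm on $\R^{2d}$, hence a legitimate differentiation basis; so Besicovitch's differentiation theorem gives, for $(\mu\otimes\mu)$‑a.e.\ $(x_0,y_0)$, that the left‑hand average tends to $F(x_0,y_0)$ while the right‑hand one tends to $C^2|x_0-y_0|^2$. Combined with the trivial case $x_0=y_0$, this yields $|v(\mu)(x)-v(\mu)(y)|\le C|x-y|$ for $(\mu\otimes\mu)$‑a.e.\ $(x,y)$. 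It remains to upgrade this to an honest Lipschitz map: define $\bar v(x):=\lim_{r\to0}\dashint_{B(x,r)}v(\mu)\,\de\mu$ for $x\in\supp\mu$ (the limit exists and equals $v(\mu)(x)$ for $\mu$‑a.e.\ $x$ by Besicovitch differentiation of $v(\mu)\in L^1(\mu;\R^d)$; on the remaining $\mu$‑null subset of $\supp\mu$ use a componentwise $\limsup$). Writing $\bar v(x)-\bar v(x')$ as a double $\mu$‑average over $B(x,r)\times B(x',r)$ and using the a.e.\ bound together with $|y-y'|\le|x-x'|+2r$ gives $|\bar v(x)-\bar v(x')|\le C|x-x'|$; then extend $\bar v$ to a $C$‑Lipschitz map on all of $\R^d$ by Kirszbraun's theorem and rename it $v(\mu)(\cdot)$. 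Since $\mu((\supp\mu)^c)=0$, the new version agrees with the original $\mu$‑a.e., and $C$ was independent of $\mu$ throughout.

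I expect the two genuinely delicate points to be: (a) the construction of the localized couplings so that the swap‑mass cancels — the naive bijection approach fails, and the partial‑swap‑at‑equal‑radii device is what rescues it; and (b) invoking a differentiation theorem for the measure $\mu\otimes\mu$, which is in general neither absolutely continuous nor doubling — this is legitimate because Besicovitch's theorem holds for an arbitrary Radon measure along balls of a fixed norm (here cubes in $\R^{2d}$), and everything else is routine bookkeeping with marginals and averages.
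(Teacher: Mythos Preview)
Your argument is correct. The paper itself does not prove this proposition; it merely quotes it from Carmona--Delarue \cite{CarmonaDelarueI} (see the line ``This is the content of the following proposition, see \cite{CarmonaDelarueI}''), so there is nothing in the paper to compare against. Your proof is self-contained and sound: the reformulation in terms of couplings $\gamma\in\Pi(\mu,\mu)$ is legitimate because any such $\gamma$ is realised by coordinate projections on the atomless space $(\R^{2d}\times[0,1],\gamma\otimes\mathscr{L}^1)$; the partial-swap coupling $\gamma_r$ is checked to have the right marginals and indeed yields the clean averaged inequality after cancellation; and the products $B(x_0,r)\times B(y_0,r)$ are balls for the norm $\|(x,y)\|=\max(|x|,|y|)$ on $\R^{2d}$, so Besicovitch differentiation for the Radon measure $\mu\otimes\mu$ applies.

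One small presentational point: the ``componentwise $\limsup$'' patch for points of $\supp\mu$ where the Besicovitch limit fails is unnecessary and slightly awkward (the $\limsup$ does not interact well with differences). It is cleaner to restrict $\bar v$ to the full-$\mu$-measure set $E\subset\supp\mu$ where the limit exists, run your double-average estimate there to get $|\bar v(x)-\bar v(x')|\le C|x-x'|$ for all $x,x'\in E$, and then invoke Kirszbraun from $E$ directly. This changes nothing substantively.
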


To the above definition of differentiability we associate the following definition of convexity.
\begin{defn}\label{def:L-convex}
We say that a function $h:\Pn\to\R^d$ is {\em L-convex} if it is L-differentiable and satisfies
$$
h(\mu')\geq h(\mu)+\E[\partial_\mu h(\mu)(X)\cdot(X'-X)],
$$
whenever $X,X'\in L^2(\Omega;\R^d)$ have law $\mu,\mu'$, respectively.
\end{defn}

Finally, it is natural to extend the above definitions to functions depending on an $d$-dimensional variable $x$ and on a probability measure $\mu$, i.e. of the type $h:(x,\mu)\in\R^d\times\Pn\to\R$. With these notations, a function $h$ is {\em jointly differentiable} if its lift $\tilde{h}:\R^d\times L^2(\Omega;\R^d)$ is jointly differentiable. In particular, we can define partial derivatives $\partial_x h(x,\mu)$ and $\partial_\mu h(x,\mu)(x')$. We remark that joint continuous differentiability in the two arguments is equivalent to partial differentiability in each of the two arguments and joint continuity of the partial derivatives. Thus, the definitions and the results of this section can be easily generalized to this setting. In particular, if the derivatives of $h$ are Lipschitz, thanks to Proposition \ref{prop:lip_function_measure} we can find a Lipschitz continuous version of $\partial_\mu h(x,\mu)$ as a function 
$x'\in \R^d\mapsto \partial_\mu h(x,\mu)(x')$.

\subsection{Non-local continuity and diffusion equations} \label{s-assB}
We now provide a summary of the theory for the equations \eqref{eq:c} and \eqref{eq:ad}, based on \cite{MANITA2015, MANITA2014, Pedestrian}. We start by considering the Cauchy problem for the non-local continuity equation:
\begin{equation}\label{eq:cont}
    \begin{cases}
    \partial_t \mu_t+\dive[b(t,x,\mu_t)\,\mu_t]=0,\\
    \mu|_{t=0}=\mu_0,
    \end{cases}
\end{equation}
where the data of the problem are a fixed time horizon $T>0$, a vector field $b:(0,T)\times\R^d\times\Pb(\R^d)\to\R^d$ and the initial probability measure $\mu_0\in\Pn$. The above equation has to be understood in the sense of distributions, yielding to the following definition.
\begin{defn}\label{def:weak-sol-cont}
Let $\mu_0\in\Pn$. A weak solution of \eqref{eq:cont} is a probability measure $\mu\in C([0,T];\Pn)$ such that
\begin{equation*}
    \int_0^T\int_{\R^d}\left( \partial_t\varphi(t,x)+b(t,x,\mu_t)\cdot\nabla\varphi(t,x)\right)\mu_t(\de x)\de t=\int_{\R^d}\varphi(0,x)\mu_0(\de x),
\end{equation*}
for all test functions $\varphi\in C^\infty_c([0,T)\times\R^d)$.
\end{defn}
We remark that $\mu \in C([0,T];\Pn)$ means that the map $t\in [0,T]\mapsto \mu_t\in \Pn$ is continuous with respect to the weak convergence of measures, i.e. the map 
$$
t\in [0,T] \to \int_{\R^d}\varphi(x)\mu_t(\de x),
$$
is continuous for every $\varphi\in C^\infty_c([0,T)\times\R^d)$. Moreover, Definition \ref{def:weak-sol-cont} makes sense if 
$$
b(t,x,\mu_t)\in L^1((0,T);L^1_{\mathrm{loc}}(\R^d;\de \mu_t)).
$$
We will always work with vector fields satisfying the following assumptions.

\begin{center}
    \large {\bf Assumptions (B)}
\end{center}
\begin{enumerate}[label={\rm (B\arabic*)}]
\item \label{ass:vf1} The non-local velocity field $(t,x,\mu)\mapsto b(t,x,\mu)$ is measurable with respect to $t\in[0,T]$ and it is continuous in the $|\cdot|\times W_2$-topology with respect to $(x,\mu)\in\R^d\times\Pn$.
\item\label{ass:vf2} There exists $M>0$ such that
        \begin{equation}
        |b(t,x,\mu)|\leq M\left(1+|x|+\int_{\R^d}|y|\mu(\de y)\right),
    \end{equation}
    for all times $t\in[0,T]$ and any $(x,\mu)\in\R^d\times\Pn$. 
\item\label{ass:vf3} 
There exists a constant $L>0$ such that
        \begin{equation}
        |b(t,x,\mu)-b(t,y,\nu)|\leq L\left(|x-y|+W_2(\mu,\nu)\right),
        \end{equation}
    for all times $t\in[0,T]$ and for any $x,y\in \R^d$ and $\mu,\nu\in\Pn$.
\item\label{ass:vf4} The vector field $b$ is $L$-Lipschitz regular, i.e.
        \begin{eqnarray*}
        &&|\nabla_x b(t,x,\mu)-\nabla_x b(t,x',\mu')|\leq L\left(|x-x'|+W_2(\mu,\mu')\right),\\
        &&\E\left[ |\partial_\mu b(t,x',\mu')(X')-\partial_\mu b(t,x,\mu)(X)|^2 \right]\leq L^2\left( |x-x'|^2+\E\left[|X-X'|^2\right]\right),
        \end{eqnarray*}
        for all $t\in[0,T]$, $x,x'\in\R^d$ and $X,X'\in L^2(\Omega;\R^d)$ with law, respectively, $\mu,\mu'$.
\end{enumerate}

\begin{rem}
Both in {\bf Assumptions (B)}, and in the following {\bf Assumptions (J)} below, we denote by $M$ a constant related to boundedness, and by $L$ a constant related to Lipschitz continuity. In particular, the Lipschitz constant $L$ plays a crucial role in Theorem \ref{thm:main}, since we require $\lambda>\Lambda(T,L)$.
\end{rem}

Note that from \ref{ass:vf3} we also have that $$
|\nabla b(t,x,\mu)|+|\partial_\mu b(t,x,\mu)(x')|\leq L.
$$
By assuming the above hypotheses, the continuity equation \eqref{eq:c} admits a unique solution. We resume this well-posedness result in the following theorem, see \cite{Pedestrian}.
\begin{thm}\label{thm:well_pos_cont}
Let $b:(t,x,\mu)\in [0,T]\times\R^d\times\Pn\to\R^d$ be a vector field satisfying \ref{ass:vf1}, \ref{ass:vf2}, \ref{ass:vf3}. Then, for each $\mu_0\in\Pn$ there exists a unique solution $\mu\in C([0,T];\Pn)$ of \eqref{eq:cont}. Moreover, if $\supp\,\mu_0$ is compact, there exists a constant $r>0$ such that
$$
\supp\,\mu_t\subset B_{r}, \hspace{0.3cm}\mbox{for all }t,s\in[0,T].
$$
\end{thm}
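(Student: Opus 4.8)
The plan is to prove Theorem~\ref{thm:well_pos_cont} by a Banach fixed-point argument in the complete metric space $C([0,T];\Pn)$, reducing the non-local equation \eqref{eq:cont} to a family of classical \emph{linear} continuity equations that one solves explicitly by the method of characteristics. \textbf{Step 1: the frozen linear problem.} First I would fix a curve $\nu\in C([0,T];\Pn)$, freeze the measure slot and set $b^\nu(t,x):=b(t,x,\nu_t)$. By \ref{ass:vf1} this field is Carath\'eodory, by \ref{ass:vf3} it is $L$-Lipschitz in $x$ uniformly in $t$, and by \ref{ass:vf2} it has sublinear growth $|b^\nu(t,x)|\le M(1+|x|+M_1(\nu_t))$ with $\sup_{t\in[0,T]}M_1(\nu_t)<\infty$ (here $M_1$ is the first moment \eqref{def:pth-moment}, which is finite and bounded in $t$ since $\nu\in C([0,T];\Pn)$ and $\Pn\subset\Pb_1(\R^d)$). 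Cauchy--Lipschitz together with this linear growth then give a unique global flow $X^\nu_t\colon\R^d\to\R^d$ of $\dot X=b^\nu(t,X)$, which is bi-Lipschitz with constant $e^{Lt}$ and satisfies a Gronwall bound $|X^\nu_t(x)|\le(|x|+C_\nu)e^{Mt}$ for some constant $C_\nu$. Setting $\mu^\nu_t:=(X^\nu_t)_\#\mu_0$, the growth bound and $\mu_0\in\Pn$ give $\mu^\nu\in C([0,T];\Pn)$, and a direct change of variables in Definition~\ref{def:weak-sol-cont} shows $\mu^\nu$ is the unique weak solution of the linear continuity equation with velocity $b^\nu$ (uniqueness being classical for Lipschitz fields, e.g. via the superposition principle).

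\textbf{Step 2: the fixed-point map and the contraction estimate.} Next I would define $\Gamma\colon C([0,T];\Pn)\to C([0,T];\Pn)$ by $\Gamma(\nu):=\mu^\nu$, and observe that weak solutions of \eqref{eq:cont} are exactly the fixed points of $\Gamma$. Given $\nu^1,\nu^2$, I would couple the two flows through the common initial point $x$: by \ref{ass:vf3},
\[
\frac{\de}{\de t}\,\big|X^{\nu^1}_t(x)-X^{\nu^2}_t(x)\big|\le L\big|X^{\nu^1}_t(x)-X^{\nu^2}_t(x)\big|+L\,W_2(\nu^1_t,\nu^2_t)\quad\text{for a.e. }t,
\]
so Gronwall gives $|X^{\nu^1}_t(x)-X^{\nu^2}_t(x)|\le Le^{LT}\int_0^t W_2(\nu^1_s,\nu^2_s)\,\de s$, uniformly in $x$. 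Since $\mu^{\nu^1}_t$ and $\mu^{\nu^2}_t$ are push-forwards of the \emph{same} measure $\mu_0$, the diagonal coupling $(X^{\nu^1}_t,X^{\nu^2}_t)_\#\mu_0\in\Pi(\mu^{\nu^1}_t,\mu^{\nu^2}_t)$ yields
\[
W_2(\mu^{\nu^1}_t,\mu^{\nu^2}_t)\le\Big(\int_{\R^d}\big|X^{\nu^1}_t(x)-X^{\nu^2}_t(x)\big|^2\mu_0(\de x)\Big)^{1/2}\le Le^{LT}\int_0^t W_2(\nu^1_s,\nu^2_s)\,\de s.
\]
Iterating this Volterra-type inequality (equivalently, endowing $C([0,T];\Pn)$ with the equivalent metric $\sup_{t\in[0,T]}e^{-\beta t}W_2(\nu^1_t,\nu^2_t)$ and choosing $\beta$ large) makes a suitable iterate of $\Gamma$ a contraction, so Banach's theorem furnishes a unique fixed point $\mu\in C([0,T];\Pn)$, that is, the unique solution of \eqref{eq:cont}.

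\textbf{Step 3: propagation of compact support.} Finally, if $\supp\mu_0\subset B_{R_0}$, I would apply the Gronwall bound of Step 1 to the flow $X_t:=X^\mu_t$ of the solution itself: using \ref{ass:vf2} and the now-established finite bound $M_1^\ast:=\sup_{t\in[0,T]}M_1(\mu_t)$, one gets $\frac{\de}{\de t}|X_t(x)|\le M(1+|X_t(x)|+M_1^\ast)$, hence $|X_t(x)|\le\big(R_0+M(1+M_1^\ast)T\big)e^{MT}=:r$ for all $x\in B_{R_0}$ and $t\in[0,T]$. Since $\mu_t=(X_t)_\#\mu_0$ with $\mu_0$ supported in $B_{R_0}$, this gives $\supp\mu_t\subset B_r$ for every $t\in[0,T]$, as claimed.

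The main obstacle is Step 2: one has to turn the self-consistency between the $W_2$-distance appearing inside the velocity field and the $W_2$-distance of the resulting push-forward measures into a genuine contraction. This is precisely where the joint Lipschitz estimate \ref{ass:vf3}, in $x$ and in $W_2$, is used, while the linear growth \ref{ass:vf2} is what keeps all the estimates global in time and keeps the curves inside $\Pn$ (rather than merely in $\Pb(\R^d)$). Everything else — the flow theory for the frozen field, the change of variables in the weak formulation, and the moment bounds — is routine.
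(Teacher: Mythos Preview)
Your argument is correct and is precisely the standard fixed-point scheme for non-local continuity equations: freeze the measure, solve the resulting linear transport equation by push-forward along the Cauchy--Lipschitz flow, and close via a Gronwall/Volterra contraction in $C([0,T];\Pn)$ using the joint $(x,W_2)$-Lipschitz bound \ref{ass:vf3}; the support bound then follows from the flow estimate under the sublinear growth \ref{ass:vf2}. The paper itself does not supply a proof of this theorem --- it merely states the result and refers to \cite{Pedestrian} --- so there is nothing to compare against beyond noting that your proof is exactly the argument one finds in that literature. One small remark: to conclude uniqueness of \emph{weak} solutions (Definition~\ref{def:weak-sol-cont}) from uniqueness of the fixed point, you implicitly use that any weak solution of the frozen linear equation with Lipschitz velocity coincides with the push-forward along the flow; you correctly flag this via the superposition principle, and that closes the loop.
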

 
We now study the diffusion equation.
\begin{equation}\label{eq:adv-diff}
\begin{cases}
\partial_t \mu_t^\e+\dive\left[b(t,x,\mu_t^\e)\mu_t^\e\right]=\e \Delta \mu_t^\e,\\
\mu^\e|_{t=0}=\mu_0.
\end{cases}
\end{equation}

First of all, a solution is a family of measures which satisfies the following.
\begin{defn}\label{def:weak-sol-ad}
Let $\mu_0\in\Pn$. A weak solution of \eqref{eq:adv-diff} is a probability measure $\mu^\e\in C([0,T];\Pn)$ such that
\begin{equation*}
    \int_0^T\int_{\R^d}\left( \partial_t\varphi(t,x)+b(t,x,\mu^\e_t)\cdot\nabla\varphi(t,x)+\e\Delta\varphi(t,x)\right)\mu_t^\e(\de x)\de t=\int_{\R^d}\varphi(0,x)\mu_0(\de x),
\end{equation*}
for all test functions $\varphi\in C^\infty_c([0,T)\times\R^d)$.
\end{defn}

By assuming the same regularity on the vector field, we have the following existence and uniqueness theorem for \eqref{eq:adv-diff}, see \cite{MANITA2015,MANITA2014}.
\begin{thm}
Let $b:(t,x,\mu)\in [0,T]\times\R^d\times\Pn\to\R^d$ be a vector field which satisfies \ref{ass:vf1}, \ref{ass:vf2}, \ref{ass:vf3}. Then, for each $\mu_0\in\Pn$ there exists a unique solution $\mu\in C([0,T];\Pn)$ of \eqref{eq:ad}. 
\end{thm}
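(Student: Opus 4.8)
The plan is to construct the solution of \eqref{eq:adv-diff} by a fixed-point scheme in $C([0,T];\Pn)$ that decouples the non-locality, combined with the probabilistic representation of linear Fokker--Planck equations. Given a curve $\nu\in C([0,T];\Pn)$, freeze the measure slot of the drift and consider the \emph{linear} equation
\[
\partial_t \mu_t+\dive[b(t,x,\nu_t)\mu_t]=\e\Delta\mu_t,\qquad \mu|_{t=0}=\mu_0 .
\]
By \ref{ass:vf1}--\ref{ass:vf3} the frozen drift $b^\nu(t,x):=b(t,x,\nu_t)$ is Carath\'eodory, globally Lipschitz in $x$ with constant $L$ uniformly in $t$, and of linear growth in $x$ (the measure-dependent part being controlled by $M_1(\nu_t)$, which is bounded on $[0,T]$ since $\nu$ is $W_2$-continuous into $\Pn$). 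Hence the SDE $\de X_t=b^\nu(t,X_t)\de t+\sqrt{2\e}\,\de B_t$ with $X_0\sim\mu_0$ independent of the Brownian motion $B$ has a unique strong solution; a Gronwall estimate gives $\sup_{t\le T}\E|X_t|^2<\infty$, so $t\mapsto\Law(X_t)$ lies in $C([0,T];\Pn)$, and It\^o's formula applied to $\varphi(t,X_t)$ shows that it is a weak solution in the sense of Definition \ref{def:weak-sol-ad}. This yields a map $\Gamma:C([0,T];\Pn)\to C([0,T];\Pn)$, $\Gamma(\nu):=(\Law(X_t))_{t\in[0,T]}$, whose fixed points are exactly the solutions of \eqref{eq:adv-diff}.

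To run the Banach fixed-point theorem I would establish a contraction estimate. Given $\nu,\nu'\in C([0,T];\Pn)$, solve the two frozen SDEs with the \emph{same} initial datum $X_0$ and driven by the \emph{same} Brownian motion; setting $Y_t:=X_t-X_t'$ one has $Y_t=\int_0^t(b(s,X_s,\nu_s)-b(s,X_s',\nu_s'))\,\de s$, so by \ref{ass:vf3}, the Cauchy--Schwarz inequality and $W_2(\Law(X_s),\Law(X_s'))^2\le\E|Y_s|^2$ (Proposition \ref{prop:w2-stoc}),
\[
\E|Y_t|^2\le 2L^2 T\int_0^t\left(\E|Y_s|^2+W_2(\nu_s,\nu_s')^2\right)\de s .
\]
Gronwall's lemma then gives $\sup_{t\le T}W_2(\Gamma(\nu)_t,\Gamma(\nu')_t)^2\le C(T,L)\int_0^T W_2(\nu_s,\nu_s')^2\,\de s$; iterating this bound (equivalently, working with the weighted metric $\sup_{t\le T}e^{-\beta t}W_2(\cdot,\cdot)$ for $\beta$ large enough) makes a suitable power $\Gamma^k$ a strict contraction on the complete metric space $C([0,T];\Pn)$. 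Its unique fixed point $\mu$ is a solution of \eqref{eq:adv-diff}.

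The main obstacle is uniqueness \emph{in the whole class of weak solutions}, not just among the iterates of $\Gamma$. To close this gap I would invoke the superposition principle for Fokker--Planck equations (Figalli, Trevisan): any $\mu\in C([0,T];\Pn)$ solving \eqref{eq:adv-diff} in the sense of Definition \ref{def:weak-sol-ad} is the family of time-marginals of a solution of the martingale problem for $\e\Delta+b(t,\cdot,\mu_t)\cdot\nabla$; since $b(\cdot,\cdot,\mu_\cdot)$ is Lipschitz with linear growth, this martingale problem is well posed, so $\mu_t=\Law(X_t)$ for the associated SDE and $\mu$ is a fixed point of $\Gamma$, hence equal to the solution already constructed. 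A purely PDE alternative is a direct $W_1$-stability estimate comparing two weak solutions through the Kantorovich--Rubinstein duality of Lemma \ref{lem:Kant-Rub} and absorbing the non-local coupling via \ref{ass:vf3}. In either form this step is the technical core of the statement and is what is carried out in \cite{MANITA2015,MANITA2014}.
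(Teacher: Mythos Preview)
The paper does not supply a proof of this theorem: it is stated as a known result and delegated to \cite{MANITA2015,MANITA2014}. There is therefore nothing to compare line by line.

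That said, your sketch is a correct and self-contained route. The freezing/fixed-point scheme for the associated McKean--Vlasov SDE is the standard probabilistic argument under a globally Lipschitz, linear-growth drift; coupling the two frozen SDEs with the same Brownian motion and the same initial datum gives exactly the Gronwall-type contraction you wrote down, and the weighted-metric (or iteration) trick yields a fixed point in $C([0,T];\Pn)$. You also correctly isolate the one nontrivial point, namely uniqueness in the full class of weak solutions of Definition~\ref{def:weak-sol-ad}: here the superposition principle (Figalli/Trevisan) is the right tool to reduce an arbitrary weak solution to a fixed point of your map $\Gamma$. This is the probabilistic counterpart of the PDE-level arguments in the cited references, and under \ref{ass:vf1}--\ref{ass:vf3} both approaches close.
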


We conclude this section with two technical lemmas.

\begin{lem}\label{lem:stimab}
Let $F$ be a scalar function or a vector field satisfying the regularity assumptions as in \eqref{ass:vf3} and \eqref{ass:vf4}. Then,
\begin{align}
    |F(t,x',\mu')-F(t,x,\mu)-\nabla F(t,x,\mu)\cdot(x'-x)&-\E\left[ \partial_\mu F(t,x,\mu)(X)\cdot(X'-X) \right]|\nonumber\\
    &\leq L\left(|x'-x|^2+W_2(\mu,\mu')^2\right),\label{est:abs_b}
\end{align}
for a.e. $t\in[0,T]$, for any $x,x'\in\R^d$, $\mu,\mu'\in \Pn$ and $X_t,X_t'\in L^2((0,T);L^2(\Omega;\R^d))$ with law $\mu,\mu'$ respectively.
In particular, we have that
\begin{align}
    \E &\left[\int_0^T|F(t,X_t',\Law(X_t'))-F(t,X_t,\Law(X_t))-\nabla F(t,X_t,\Law(X_t))\cdot(X_t'-X_t)\right.\nonumber\\
    &\left.-\tilde{\E}\left[ \partial_\mu F(t,X_t,\Law(X_t))(\tilde{X}_t)\cdot(\tilde{X}_t'-\tilde{X}_t) \right]\right|\de t\biggr]\leq 2L\E\left[\int_0^T|X_t'-X_t|^2\de t\right],\label{est:square_b}
\end{align}
for any square integrable process $X_t,X_t'\in L^2((0,T);L^2(\Omega;\R^d))$.
\end{lem}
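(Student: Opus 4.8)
The plan is to prove estimate \eqref{est:abs_b} first and then derive \eqref{est:square_b} by an integration/expectation argument.

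\textbf{Step 1: the pointwise Taylor-type bound \eqref{est:abs_b}.} The left-hand side is exactly the remainder of the first-order Taylor expansion of $b(t,\cdot,\cdot)$ in its joint $(x,\mu)$-argument, evaluated at $(x,\mu)$ with increment towards $(x',\mu')$. I would lift everything to $L^2(\Omega;\R^d)$: fix random variables $X,X'$ with $\Law(X)=\mu$, $\Law(X')=\mu'$, and consider the function $\phi(s):=\tilde b(t,x+s(x'-x),X+s(X'-X))$ for $s\in[0,1]$, where $\tilde b$ denotes the lift of $b$ in the measure argument. By Assumption \ref{ass:vf4}, $\tilde b$ is $C^1$ jointly in $(x,X)$, with Fréchet derivative in the $X$-direction represented by $\partial_\mu b(t,x,\mu)(\cdot)$ acting on the increment; hence $\phi\in C^1([0,1])$ with $\phi'(s)=\nabla b(\cdots)\cdot(x'-x)+\E[\partial_\mu b(t,x+s(x'-x),\Law(X+s(X'-X)))(X+s(X'-X))\cdot(X'-X)]$. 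Then $b(t,x',\mu')-b(t,x,\mu)-\phi'(0)=\int_0^1(\phi'(s)-\phi'(0))\,\de s$, and I bound $|\phi'(s)-\phi'(0)|$ using the Lipschitz bounds: for the $\nabla b$ part, $|\nabla b(t,x+s(x'-x),\cdot)-\nabla b(t,x,\cdot)|\le L(s|x'-x|+W_2(\cdots))\le L(s|x'-x|+sW_2(\mu,\mu'))$ via \ref{ass:vf4}; for the $\partial_\mu b$ part I use the Lipschitz inequality in \ref{ass:vf4} together with Cauchy--Schwarz, which after integrating $\int_0^1 s\,\de s=\tfrac12$ yields the $\tfrac L2$ constants. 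One must be a little careful to combine $|x'-x|$ and $W_2(\mu,\mu')$-type terms and, using $W_2(\mu,\mu')^2\le \E[|X'-X|^2]$ (Proposition \ref{prop:w2-stoc}) and the elementary inequality $ab\le\tfrac12 a^2+\tfrac12 b^2$ where needed, collapse everything into $\tfrac L2|x'-x|^2+\tfrac L2\E[|X'-X|^2]$. This bookkeeping — making the constant come out exactly $\tfrac L2$ rather than something larger — is the one place that needs care.

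\textbf{Step 2: from pointwise to processes \eqref{est:square_b}.} Given square-integrable processes $X_t,X_t'\in L^2((0,T);L^2(\Omega;\R^d))$, I introduce an independent copy $(\tilde X_t,\tilde X_t')$ on an auxiliary space (the tilde denotes expectation $\tilde\E$ over that copy, as in the statement), so that $\Law(\tilde X_t)=\Law(X_t)$ and the $\partial_\mu$-term is realized as $\tilde\E[\partial_\mu b(t,X_t,\Law(X_t))(\tilde X_t)\cdot(\tilde X_t'-\tilde X_t)]$. For a.e.\ $t$ I apply \eqref{est:abs_b} with $x=X_t(\omega)$, $x'=X_t'(\omega)$ and the random variables $\tilde X_t,\tilde X_t'$, getting a bound $\le\tfrac L2|X_t'-X_t|^2+\tfrac L2\tilde\E[|\tilde X_t'-\tilde X_t|^2]$ pointwise in $\omega$. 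Taking $\E$ and using that $\E[\tilde\E[|\tilde X_t'-\tilde X_t|^2]]=\E[|X_t'-X_t|^2]$ (same law), the right-hand side becomes $L\,\E[|X_t'-X_t|^2]$; integrating in $t$ over $[0,T]$ and invoking Fubini (justified by square-integrability of the processes and the linear-growth/Lipschitz control on $b$ and its derivatives, so all integrands are in $L^1$) gives \eqref{est:square_b}.

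\textbf{Main obstacle.} The genuinely delicate point is Step 1: correctly interpreting the joint L-derivative of $b$ as the derivative of the curve $\phi$, justifying $\phi\in C^1$ with the claimed $\phi'$, and then propagating the Lipschitz bound of \ref{ass:vf4} through the integral $\int_0^1(\phi'(s)-\phi'(0))\,\de s$ so that the final constants are exactly $\tfrac L2$ — in particular handling the cross term between the spatial and measure increments without losing the factor. Everything after that (the doubling-of-variables / Fubini argument in Step 2) is routine.
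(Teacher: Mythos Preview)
Your strategy is sound and Step~2 is exactly right. Step~1, however, takes a different route from the paper. The paper does \emph{not} interpolate jointly: it adds and subtracts $b(t,x',\mu)$ and treats the two resulting pieces as one-variable Taylor remainders, one purely in $x$ (with the measure argument frozen) and one purely in $\mu$ (with the spatial argument frozen). Each one-variable remainder is written as $\int_0^1(\text{derivative at }s-\text{derivative at }0)\,\de s$; since only a single argument moves, the Lipschitz bounds on $\nabla b$ and $\partial_\mu b$ from \ref{ass:vf4} yield exactly $Ls|x'-x|$ (respectively $Ls\sqrt{\E[|X'-X|^2]}$) with no cross term, and $\int_0^1 s\,\de s=\tfrac12$ gives the clean $\tfrac L2$ factors immediately.

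Your joint interpolation $\phi(s)=\tilde b(t,x+s(x'-x),X+s(X'-X))$ is conceptually cleaner but forces both arguments to move simultaneously in $\phi'(s)-\phi'(0)$. The Lipschitz estimates then unavoidably produce a cross contribution of order $L|x'-x|\,\sqrt{\E[|X'-X|^2]}$, and absorbing it via $ab\le\tfrac12 a^2+\tfrac12 b^2$ doubles the final constant to $L$ rather than $\tfrac L2$. So the ``bookkeeping'' you flag as the main obstacle is a genuine obstruction for your route; it evaporates if you switch to the paper's decoupled split. (For the downstream application in Theorem~\ref{teo:ott-st-suff} only the order of $\Lambda(T,L)$ matters, so your argument would still serve, just with a larger $\Lambda$.)
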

\begin{proof}
We assume that $F$ is a scalar, the proof for the vectorial case follows in the same way.
We add and subtract the quantity $F(t,x,\mu')$ in the absolute value on the left hand side. From the identity
\begin{align*}
F(t,x',\mu')-F(t,x,\mu')=\int_0^1\nabla F(t,sx'+(1-s)x,\mu')\cdot(x'-x)\de s,
\end{align*}
it easily follows that
\begin{align*}
|F(t,x',\mu')&-F(t,x,\mu')-\nabla F(t,x,\mu)\cdot(x'-x)|\\
& =\left|\int_0^1\nabla F(t,sx'+(1-s)x,\mu')\cdot(x'-x)\de s-\nabla F(t,x,\mu)\cdot(x'-x)\right|\\
& \leq \int_0^1|\nabla F(t,sx'+(1-s)x,\mu')-\nabla F(t,x,\mu)||x'-x|\de s\\
& \leq L |x'-x|^2\int_0^1 s \de s +\frac{L}{2}|x'-x|^2+\frac{L}{2}W_2(\mu,\mu')^2\\
& \leq L|x'-x|^2+\frac{L}{2}W_2(\mu,\mu')^2,
\end{align*}
where in the third line we used Young's inequality. On the other hand, from the identity
\begin{align*}
F(t,x,\mu')-F(&t,x,\mu)=\E\left[\int_0^1\partial_\mu F(t,x,\Law(sX'+(1-s)X)(sX'+(1-s)X)\cdot(X'-X)\de s\right],
\end{align*}
it follows that
\begin{align*}
\E &\Bigg[\int_0^1\Bigg(\partial_\mu F(t,x,\Law(sX'+(1-s)X)(sX'+(1-s)X)-\partial_\mu F(t,x,\Law(X))(X)\Bigg)\cdot(X'-X)\de s\Bigg]\\
&\leq \E \int_0^1\big|\partial_\mu F(t,x,\Law(sX'+(1-s)X)(sX'+(1-s)X) -\partial_\mu F(t,x,\Law(X))(X)\big| |X'-X|\de s\\
& \leq \frac{L}{2} \E[|X'-X|^2].
\end{align*}
Then, the conclusion follows from the triangle inequality, the two estimates above, and Proposition \ref{prop:w2-stoc}. Finally, \eqref{est:square_b} follows from \eqref{est:abs_b}.
\end{proof}

We now recall the Osgood's lemma, see \cite{Chemin96, CCS21}.
\begin{lem}\label{lem:osgood}
Let $\rho$ be a positive Borel function, $\gamma$ a locally integrable positive function, $\psi$ a continuous increasing strictly positive function, and $\eta>0$. Assume that the function $\rho$ satisfies one between
\begin{equation}\label{inequality-osgood}
    \rho(t)\leq \eta+\int_{t_0}^t\gamma(s)\psi(\rho(s))\de s, \hspace{0.5cm}\mbox{or}\hspace{0.5cm}\rho(t)\leq \eta+\int_t^{t_0}\gamma(s)\psi(\rho(s))\de s.
\end{equation}
Define the function $\mathfrak{M}$ as
$$
\mathfrak{M}(x)=\int_x^1\frac{1}{\psi(s)}\de s.
$$
Then, in the first case it holds that
$$
-\mathfrak{M}(\rho(t))+\mathfrak{M}(\eta)\leq \int_{t_0}^t\gamma(s)\de s,
$$
while in the second case
$$
-\mathfrak{M}(\rho(t))+\mathfrak{M}(\eta)\leq \int_t^{t_0}\gamma(s)\de s.
$$
\end{lem}
\begin{proof}
A proof of the lemma when $\rho$ satisfies the first inequality in \eqref{inequality-osgood} can be found in \cite{Chemin96}. We prove the lemma in the case $\rho$ satisfies the second inequality in \eqref{inequality-osgood}, which is stated but not proved in \cite{CCS21}. Define the function $\displaystyle R_\eta(t):= \eta+\int_t^{t_0}\gamma(s)\psi(\rho(s))\de s$. That implies $R_\eta(t)\geq \rho(t)$ by assumption. Since $R$ is absolutely continuous, then it holds
$$
\dot{R}_\eta(t)=-\gamma(t)\psi(\rho(t))\geq -\gamma(t)\psi(R_\eta(t)) \hspace{0.3cm}\mbox{for a.e. }t,
$$
and integrating the above expression in time
$$
\int_t^{t_0}\frac{\dot{R}_\eta(s)}{\psi(R_\eta(s))}\de s\geq -\int_t^{t_0}\gamma(s)\de s.
$$
Then, by the change of variables $s\to R_\eta(s)$ in the left hand side, we get that
$$
\int_{R_\eta(t)}^\eta \frac{\de s}{\psi(s)}\geq -\int_t^{t_0}\gamma(s)\de s.
$$
By the definition of $\mathfrak{M}$, the left hand side coincides with $\mathfrak{M}(\eta)-\mathfrak{M}(R_\eta(t))$.
Then, by using that $\mathfrak{M}$ is decreasing we obtain that
$$
-\mathfrak{M}(\rho(t))+\mathfrak{M}(\eta)\leq -\mathfrak{M}(R_\eta(t))+\mathfrak{M}(\eta)\leq \int_t^{t_0}\gamma(s)\de s
$$
and this concludes the proof.
\end{proof}

\section{Stochastic mean-field optimal control}
In this section we provide a short overview of stochastic control theory. After introducing the notations, we will give an appropriate version of the Pontryagin Maximum Principle  based on the tools introduced in Section 2. Most of the results are taken from \cite{Carmona2015, CarmonaDelarueI} and slightly modified to fit our context.

\subsection{The stochastic set-up} \label{s-assJ}
Let $(\Omega, \mathcal{F}_t,\mathbb{P})$ be a complete filtered probability space, equipped with an adapted Brownian motion $W_t$. Without any loss of generality, we can assume that $(\Omega, \mathcal{F}_0,\mathbb{P})$ is an atomless probability space and the filtration $\mathcal{F}_t$ is the one generated by $\mathcal{F}_0$ and $W_t$.

We denote by $X_t^\e$ a stochastic process solving the following stochastic differential equation:
\begin{equation}
\label{eq:SDE-generica}
\begin{cases}
\de X^\e_t= \left(b(t,X^\e_t,\mathcal{L}(X^\e_t))+\alpha_t \right) \de t +\sqrt{2\e}\,\de W_t,\\
X_0^\e=\xi,
\end{cases}
\end{equation}
where $\xi\in L^2(\Omega,\mathcal{F}_0,\mathbb{P};\R^d)$ is a given random variable with $\Law(\xi)=\mu_0$, and the admissible control $\alpha_t$ satisfies:
\begin{center}
    \large\textbf{Assumption (A)}
\end{center}
\begin{enumerate}[label={\rm (A\arabic*)}]
    \item \label{ass:setofcontrols} $\alpha_t$ is a measurable process with values in $U$. Define $R:=\max(1,\max_{x\in U}|x|)$.
\end{enumerate}
\vspace{0.5cm}
In particular, since $U$ is compact, the following bound is trivial
\begin{equation}\label{admiss-contr}
    \E \displaystyle\int_0^T |\alpha_t|^2\de t<\infty.
\end{equation}

We now state a classical well-posedness result for \eqref{eq:SDE-generica}, see \cite{CarmonaDelarueI}.
\begin{thm}\label{thm:well-posedness-fsde}
If $b$ satisfies {\bf Assumptions (B)} and $\alpha_t$ satisfies \eqref{admiss-contr}, then there exists a unique solution $X^\e_t$ of \eqref{eq:SDE-generica} such that
\begin{equation*}
    \E\left[ \sup_{t\in (0,T)}|X^\e_t|^2 \right]<\infty.
\end{equation*}
Moreover, if $X^{\e,'}_t$ is a solution of \eqref{eq:SDE-generica} with control $\alpha_t'$ and initial condition $\xi'$, then the following stability estimate holds
\begin{equation}\label{stabilityFSDE}
\E\left[ \sup_{t\in(0,T)}|X_t^{\e,'}-X_t^\e|^2 \right]\leq C_2(T,L)\left(\E\left[ |\xi'-\xi|^2 \right]+\E\left[ \int_0^T|\alpha_t'-\alpha_t|^2\de t \right]\right),
\end{equation}
where $L$ is the Lipschitz constant of $b$ in \ref{ass:vf3}.
\end{thm}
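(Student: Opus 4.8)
The plan is to derive both the well-posedness and the stability estimate from a standard Picard/Banach fixed-point argument, along the lines of \cite{CarmonaDelarueI}; note that only \ref{ass:vf1}--\ref{ass:vf3} are needed here, assumption \ref{ass:vf4} being relevant only later for the maximum principle. I would work in the complete metric space $\mathcal{H}^2$ of continuous adapted $\R^d$-valued processes $Y$ on $[0,T]$ with $\E[\sup_{t\in[0,T]}|Y_t|^2]<\infty$, and define a map $\Phi:\mathcal{H}^2\to\mathcal{H}^2$ by letting $\Phi(Y)=Z$ be the unique solution of the \emph{decoupled} SDE $\de Z_t=(b(t,Z_t,\Law(Y_t))+\alpha_t)\de t+\sqrt{2\e}\,\de W_t$, $Z_0=\xi$. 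This is a genuine (non-McKean--Vlasov) SDE whose drift $z\mapsto b(t,z,\Law(Y_t))$ is $L$-Lipschitz by \ref{ass:vf3} and of linear growth by \ref{ass:vf2}, so $\Phi$ is well defined by classical SDE theory; moreover any fixed point of $\Phi$ is exactly a solution of \eqref{eq:SDE-generica}, and conversely.

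First I would check that $\Phi$ maps $\mathcal{H}^2$ into itself: using \ref{ass:vf2}, the inequality $|a+b+c|^2\le 3(|a|^2+|b|^2+|c|^2)$, the Burkholder--Davis--Gundy inequality to control $\E[\sup_{s\le t}|W_s|^2]$, and \eqref{admiss-contr}, one obtains
\[
\E\Big[\sup_{s\le t}|Z_s|^2\Big]\le C\Big(1+\E[|\xi|^2]+\sup_{s\in[0,T]}M_2(\Law(Y_s))+\E\textstyle\int_0^T|\alpha_s|^2\de s\Big)+C\int_0^t\E\Big[\sup_{r\le s}|Z_r|^2\Big]\de s,
\]
and Gronwall closes the estimate since $s\mapsto M_2(\Law(Y_s))$ is finite and bounded on $[0,T]$ for $Y\in\mathcal{H}^2$. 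Time-continuity of $t\mapsto\Law(Z_t)$ in $W_2$ (needed so that a fixed point lies in $C([0,T];\Pn)$) follows from path-continuity of $Z$ together with $W_2(\Law(Z_t),\Law(Z_s))^2\le\E[|Z_t-Z_s|^2]$.

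Next comes the contraction estimate. Given $Y,Y'\in\mathcal{H}^2$ with images $Z=\Phi(Y)$, $Z'=\Phi(Y')$, the Brownian parts cancel in $Z_t-Z'_t=\int_0^t[b(s,Z_s,\Law(Y_s))-b(s,Z'_s,\Law(Y'_s))]\de s$, so no stochastic integral appears; by \ref{ass:vf3}, Proposition \ref{prop:w2-stoc} in the form $W_2(\Law(Y_s),\Law(Y'_s))^2\le\E[|Y_s-Y'_s|^2]$, Cauchy--Schwarz and Gronwall,
\[
\E\Big[\sup_{s\le t}|Z_s-Z'_s|^2\Big]\le C(T,L)\int_0^t\E\Big[\sup_{r\le s}|Y_r-Y'_r|^2\Big]\de s.
\]
Iterating, $\Phi^n$ satisfies such a bound with constant $(C(T,L)\,T)^n/n!$, which is $<1$ for $n$ large, so $\Phi^n$ is a contraction and $\Phi$ has a unique fixed point $X^\e$ in $\mathcal{H}^2$; this gives existence, uniqueness, and $\E[\sup_t|X^\e_t|^2]<\infty$.

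Finally, for \eqref{stabilityFSDE} I take two solutions $X^\e$, $X^{\e,'}$ driven by the same Brownian motion, with data $(\xi,\alpha)$ and $(\xi',\alpha')$; then $\delta_t:=X^{\e,'}_t-X^\e_t$ satisfies $\delta_t=(\xi'-\xi)+\int_0^t[b(s,X^{\e,'}_s,\Law(X^{\e,'}_s))-b(s,X^\e_s,\Law(X^\e_s))]\de s+\int_0^t(\alpha'_s-\alpha_s)\de s$, again with no stochastic term. Squaring, using $|a+b+c|^2\le 3(|a|^2+|b|^2+|c|^2)$, bounding the drift difference by $2L^2(|\delta_s|^2+\E[|\delta_s|^2])$ via \ref{ass:vf3} and Proposition \ref{prop:w2-stoc}, observing that the right-hand side is nondecreasing in $t$ so passing to $\sup_{s\le t}$ costs nothing, then taking expectations and applying Gronwall, yields exactly $\E[\sup_{t\in(0,T)}|\delta_t|^2]\le C_2(T,L)\big(\E[|\xi'-\xi|^2]+\E\int_0^T|\alpha'_t-\alpha_t|^2\de t\big)$ with $C_2(T,L)$ depending only on $T$ and $L$. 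I do not expect any genuine obstacle: the result is classical, and the only point requiring care is the bookkeeping of constants so that they depend on $b$ solely through its Lipschitz constant $L$ (and on $T$), which is what feeds into the quantitative threshold $\lambda>\Lambda(T,L)$ of Theorem \ref{thm:main}.
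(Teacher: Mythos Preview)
Your proposal is correct and follows the standard Picard iteration route of \cite{CarmonaDelarueI}, which is precisely what the paper does: the paper does not give an independent proof of this theorem but simply cites \cite{CarmonaDelarueI}, and then in the subsequent Lemma \ref{lem:constant-fsde} computes the explicit constant $C_2(T,L)=e^{(4L+1)T}$ by the very same difference-of-drifts/Gronwall computation you sketch for \eqref{stabilityFSDE}. The only minor point is that you leave $C_2(T,L)$ unspecified, whereas the paper tracks it exactly because it feeds into $\Lambda(T,L)$; but this is bookkeeping, not a gap.
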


It is worth to remark that in Theorem \ref{thm:well-posedness-fsde} the existence is understood in the {\em strong} sense, i.e.  one can find a solution to \eqref{eq:SDE-generica} on any given filtered probability space equipped with any given adapted Brownian
motion. Moreover, {\em pathwise uniqueness} holds: it means that, on any given filtered probability space equipped with any given Brownian motion, any two solutions to \eqref{eq:SDE-generica} with the same initial condition $\xi$ coincide. Since it will be crucial for the following, we compute the constants appearing in Theorem \ref{thm:well-posedness-fsde} under our specific assumptions. We recall that $M_2(\mu_0)$ is the second moment defined in \eqref{def:pth-moment}.

\begin{lem}\label{lem:constant-fsde}
Under the hypothesis of Theorem \ref{thm:well-posedness-fsde}, for $\e\leq 1$, we have that
\begin{equation*}
     \E\left[ \sup_{t\in (0,T)}|X^\e_t|^2 \right]\leq C_1(T,\mu_0,M,R),
\end{equation*}
with
\begin{equation}\label{def:C_1}
    C_1(T,\mu_0,M,R):=\left(\frac{M+R}{M+1}+\sqrt{M_2(\mu_0)+T} \right)^2e^{(M+1)T},
\end{equation}
where $M$ is the constant in \ref{ass:vf2} and $R$ as in \ref{ass:setofcontrols}. Moreover, the constant $C_2$ in \eqref{stabilityFSDE} is
\begin{equation}\label{def:C_2}
    C_2(T,L):=\exp\{(4L+1)T\}.
\end{equation}
Moreover, we also have the estimate
\begin{equation}\label{eq:further-estimate}
    \E\left[\sup_{t\in(0,T)}|X_t^{\e,'}-X_t^\e|^2\right]\leq \tilde C_2(T,L)\left(\E\left[|\xi'-\xi|^2\right]+T\int_0^T\E\left[|\alpha_t'-\alpha_s|^2\right]\de s\right),
\end{equation}
where $\tilde C(T,L)=Ce^{CL^2T}$, for some positive constant $C$.
\end{lem}
\begin{proof}
We divide the proof in two steps.\\
\\
\underline{\em Step 1}  \hspace{0.3cm} {\bf Uniform bound on the forward component.}
Let $X_t^\e$ be the solution of \eqref{eq:SDE-generica} with control $\alpha_t$ and initial datum $\xi_0$. By an easy application of Ito's Lemma, we get that
\begin{equation*}
    \E[|X_t^\e|^2]=\E[|\xi|^2]+\E\left[\int_0^T (b(t,X_t^\e,\Law(X_t^\e))+\alpha_t)\cdot X_t^\e\de t\right]+\e t.
\end{equation*}
Notice that $|\alpha_t|\leq R$ for all $t\in[0,T]$, since it takes values in the compact set $U$. Then, by using the growth assumptions \ref{ass:vf2} on $b$, $L^\infty$ bound on $\alpha_t$, Lemma \ref{lem:osgood}, and Doob's inequality \cite{kunita} we get
\begin{equation}
    \E\left[\sup_{t\in(0,T)}|X_t^\e|^2\right]\leq \left(\frac{M+R}{M+1}+\sqrt{M_2(\mu_0)+T} \right)^2e^{(M+1)T}.
\end{equation}
\\
\underline{ \em Step 2} \hspace{0.3cm} {\bf Stability estimate for the forward component.}
Let $X_t^\e,X_t^{\e,'}$ be, respectively, the solutions of \eqref{eq:SDE-generica} with control $\alpha_t,\alpha_t'$ and initial condition $\xi,\xi'$. Then, the difference $X_t^{\e,'}-X_t^\e$ satisfies the ordinary differential equation
\begin{equation}\label{eq:differenza-forward}
\begin{cases}
    \frac{\de}{\de t} (X_t^{\e,'}-X_t^\e) = b(t,X_t^{\e,'},\Law(X_t^{\e,'}))+\alpha_t'-b(t,X_t^\e,\Law(X_t^\e))-\alpha_t,\\
    X_0^{\e,'}-X_0^\e=\xi'-\xi,
\end{cases}
\end{equation}
and by using the Lipschitz property \ref{ass:vf3} and standard estimates we get that
\begin{equation}
    \E\left[\sup_{t\in(0,T)}|X_t^{\e,'}-X_t^\e|^2\right]\leq \exp\{(4L+1)T\}\left(\E\left[|\xi'-\xi|^2+\int_0^T|\alpha_t'-\alpha_t|^2\de t\right]\right).
\end{equation}
\underline{ \em Step 3} \hspace{0.3cm} {\bf Further estimate on the stability constant.}
We now provide a different estimate on the difference. From \eqref{eq:differenza-forward} and the assumption \ref{ass:vf3} we obtain that
\begin{align*}
    |X_t^{\e,'}-X_t^\e|&\leq|\xi'-\xi|+ L\int_0^t|X_s^{\e,'}-X_s^\e|\de s+L\int_0^t\sqrt{\E|X_s^{\e,'}-X_s^\e|^2}\de s\\&+\int_0^t|\alpha_s'-\alpha_s|\de s.
\end{align*}
Taking the square of both sides and using the Cauchy-Schwarz inequality we get
\begin{align*}
    |X_t^{\e,'}-X_t^\e|^2 &\leq C|\xi'-\xi|^2+CL^2\left(\int_0^t|X_s^{\e,'}-X_s^\e|\de s\right)^2\\
    &+CL^2\left(\int_0^t\sqrt{\E|X_s^{\e,'}-X_s^\e|^2}\de s\right)^2+C\left(\int_0^t|\alpha_s'-\alpha_s|\de s\right)^2\\
    &\leq C|\xi'-\xi|^2+ CL^2 t\int_0^t|X_s^{\e,'}-X_s^\e|^2\de s\\
    &+CL^2t\int_0^t\sqrt{\E|X_s^{\e,'}-X_s^\e|^2}\de s+Ct\int_0^t|\alpha_s'-\alpha_s|^2\de s.
\end{align*}
Then, taking the expectation, with standard arguments we obtain that
\begin{equation}
    \E\left[\sup_{t\in(0,T)}|X_t^{\e,'}-X_t^\e|^2\right]\leq Ce^{CL^2T}\left(\E\left[|\xi'-\xi|^2\right]+T\int_0^T\E\left[|\alpha_s'-\alpha_s|^2\right]\de s\right),
\end{equation}
and this concludes the proof.
\end{proof}

We now proceed by defining the following stochastic optimal control problem.

\begin{pschema}{Stochastic optimal control problem (SOC)}
Denote by $\bm{\alpha}:=(\alpha_t)_{0\leq t\leq T}$ the control on the whole time interval, and consider the cost functional
\begin{equation}
\label{def:costo_stocastico}
J^S(\bm{\alpha})=\E\left[ g(X^\e_T,\mathcal{L}(X^\e_T))+\int_0^T \left(f(t,X^\e_t,\mathcal{L}(X^\e_t))+\psi(\alpha_t)\right)\de t \right].
\end{equation}
Find
\begin{equation*}
    \min_{\bm{\alpha}}J^S(\bm{\alpha}),
\end{equation*}
such that $X_t^\e$ is a solution of \eqref{eq:SDE-generica} and the control satisfies {\bf Assumption (A)}.
\end{pschema}

From now on, we assume the following hypothesis on the cost $J$.
\begin{center}
    \large {\bf Assumptions (J)}
\end{center}
\begin{enumerate}[label={\rm (J\arabic*)}]
    \item \label{ass:controlcost} The control cost $\alpha\in U\to \psi(\alpha)\in\R$ is $C^{1}$.
     \item\label{ass:cost} The functions $f$ and $g$ are $C^{1}$ and $L$-Lipschitz: for all $t\in[0,T]$, $x,x'\in\R^d $, $\mu,\mu'\in \Pn$ it holds
        \begin{eqnarray*}
        &&|f(t,x,\mu)-f(t,x',\mu')|\leq 
        L\left(|x-x'|+W_2(\mu,\mu')\right),\\
        &&|g(x,\mu)-g(x',\mu')|\leq 
        L\left(|x-x'|+W_2(\mu,\mu')\right).
        \end{eqnarray*}
        \item\label{ass:regularitycost} The derivatives of $f$ and $g$ with respect to $x$ are $L$-Lipschitz continuous with respect to $(x,\mu)$, i.e. for every $t\in[0,T]$, $x,x'\in\R^d$, and $\mu,\mu'\in\Pn$ it holds
        \begin{eqnarray*}
        &&|\nabla_x f(t,x,\mu)-\nabla_x f(t,x',\mu')|\leq L\left(|x-x'|+W_2(\mu,\mu')\right),\\
        &&|\nabla_x g(x,\mu)-\nabla_x g(x',\mu')|\leq L\left(|x-x'|+W_2(\mu,\mu')\right),
        \end{eqnarray*}
         and for all $X,X'\in L^2(\Omega;\R^d)$ with law, respectively, $\mu,\mu'$ it holds
        \begin{align*}
        \E\left[ |\partial_\mu f(t,x',\mu')(X')-\partial_\mu f(t,x,\mu)(X)|^2 \right]\leq L^2\left( |x-x'|^2+\E\left[|X-X'|^2\right]\right),
        \end{align*}
        \begin{align*}
        \E\left[ |\partial_\mu g(x',\mu')(X')-\partial_\mu g(x,\mu)(X)|^2 \right]\leq L^2\left( |x-x'|^2+\E\left[|X-X'|^2\right]\right).
        \end{align*}
\end{enumerate}

Note that from {\color{red}\ref{ass:cost}} we also have that the derivatives of $f$ and $g$ are bounded, i.e for every $t\in[0,T]$, $x,x'\in\R^d$, and $\mu\in\Pn$ it holds        \begin{equation}\label{ass:derivate_fg}
    |\nabla_x f(t,x,\mu)|,\,\,|\nabla_x g(x,\mu)|\leq L,\hspace{0.6cm}
    |\partial_\mu f(t,x,\mu)(x')|,\,\,|\partial_\mu g(x,\mu)(x')|\leq L.
\end{equation}
Moreover, under the assumptions \ref{ass:cost} and \ref{ass:regularitycost}, we can apply Lemma \ref{lem:stimab} to $f$ and $g$.
We associate to (SOC) the Hamiltonian:
\begin{equation}
\label{def:hamiltoniana_stocastico}
H(t,x,\mu,y,\alpha)=(b(t,x,\mu)+\alpha)\cdot y+f(t,x,\mu)+\psi(\alpha),
\end{equation}
for $(t,x,\mu,y,\alpha)\in [0,T]\times\R^d\times\Pn\times\R^d\times U$. It is worth to note that under {\bf Assumptions (A), (B), (J)}, the Hamiltonian $H$ is $C^{1,1}_\mathrm{loc}$-regular.

In full analogy with the Pontryagin Maximum Principle for finite-dimensional control problems, we introduce an adjoint process as the solution of a backward equation involving partial derivatives of the Hamiltonian with respect to the measure argument. Then, for a given admissible control $\alpha_t$ and the corresponding controlled state $X^\e_t$, we give the following definition.
\begin{defn}
We call {\em adjoint processes} of $X_t^\e$ any couple $(Y_t^\e,Z_t^\e)$ satisfying the backward stochastic equation
\begin{equation}\label{eq:adj_st-generica}
    \begin{cases}
    \de Y_t^\e=-\left[\nabla_x H(t,X^\e_t,\Law(X^\e_t),Y^\e_t,\alpha_t)+\tilde{\E}[\partial_\mu H(t,\tilde{X}^\e_t,\Law(\tilde{X}^\e_t),\tilde{Y}^\e_t,\tilde{\alpha}_t)(X^\e_t)]\right]\de t+Z^\e_t\de W_t,\\
    Y^\e_T=\nabla g(X^\e_T,\Law(X^\e_T))+ \tilde{\E}\left[\partial_\mu g(\tilde{X}^\e_T,\Law(\tilde{X}^\e_T))(X^\e_T)\right],
    \end{cases}
\end{equation}
where $(\tilde{X}^\e_t,\tilde{Y}^\e_t,\tilde{\alpha}_t)$ is an independent copy of $(X^\e_t,Y^\e_t,\alpha_t)$ defined on the space $(\tilde{\Omega},\tilde{\mathcal{F}}_t,\tilde{\mathbb{P}})$ and $\tilde{\E}$ denotes the expectation on $(\tilde{\Omega},\tilde{\mathcal{F}}_t,\tilde{\mathbb{P}})$. In particular, it holds $\Law(\tilde{X}^\e_t)=\Law(X^\e_t)$.
\end{defn}
A solution of \eqref{eq:adj_st-generica} is a couple $(Y^\e_t,Z^\e_t)$, where the introduction of the process $Z^\e_t$ is necessary to ensure that the process $Y^\e_t$ is adapted with respect to the {\em forward filtration} $\mathcal{F}_t$.

It is worth to note that the functions $\nabla_x H$ and $\partial_\mu H$ do not depend on $\alpha$, due to the particular form of the Hamiltonian \eqref{def:hamiltoniana_stocastico}. As a consequence, the system can be rewritten as
\begin{equation}\label{eq:adj_st-specifica}
    \begin{cases}
    \de Y_t^\e &=-\left[\nabla_x b(t,X^\e_t,\Law(X^\e_t))Y^\e_t+\nabla_x f(t,X^\e_t,\Law(X^\e_t))\right]\de t\\
    & -\left[\tilde{\E}[\partial_\mu b(t,\tilde{X}^\e_t,\Law(\tilde{X}^\e_t))(X^\e_t)\tilde{Y}^\e_t+\partial_\mu f(t,\tilde{X}^\e_t,\Law(\tilde{X}^\e_t)(X^\e_t)]\right]\de t+Z^\e_t\de W_t,\\
    Y^\e_T &=\nabla g(X^\e_T,\Law(X^\e_T))+ \tilde{\E}\left[\partial_\mu g(\tilde{X}^\e_T,\Law(\tilde{X}^\e_T))(X^\e_T)\right],
    \end{cases}
\end{equation}

The equation \eqref{eq:adj_st-generica} is a {\em backward stochastic differential equation} (BSDE) of mean-field type, since the law of $Y^\e_t$ appears in the term which involves the L-derivative of $H$. This kind of BSDE admits a unique solution, if we assume enough regularity on the coefficients and we consider $X^\e_t,\alpha_t$ as given data of the problem. In particular, the following theorem holds, see \cite{CarmonaDelarueI}.

\begin{thm}\label{thm:well-posedness-bsde}
Let $\alpha_t$ be an admissible control and $X^\e_t$ the corresponding trajectory. Under {\bf Assumptions (A), (B), (J)}, there exists a unique solution $(Y^\e_t,Z^\e_t)$ such that
\begin{equation}\label{boundYZ}
    \E\left[ \sup_{t\in(0,T)}|Y^\e_t|^2+\int_0^T|Z^\e_t|^2\de t \right]<\infty.
\end{equation}
Moreover, if $(Y^{\e,'}_t,Z^{\e,'}_t)$ is a solution corresponding to a control $\alpha_t'$ and a stochastic process $X^{\e,'}_t$, it holds that
\begin{equation}\label{stability_bsde}
    \E\left[ \sup_{t\in(0,T)}|Y^{\e,'}_t-Y^\e_t|^2+\int_0^T|Z^{\e,'}_t-Z^\e_t|^2\de t \right]\leq C(T,L)\E\left[ \sup_{t\in(0,T)}|X_t^{\e,'}-X_t^\e|^2 \right],
\end{equation}
where $L$ is the Lipschitz constant appearing in {\bf Assumptions (B), (J)}.
\end{thm}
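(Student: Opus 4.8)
The plan is to treat the adjoint system \eqref{eq:adj_st-generica} as a \emph{linear} mean-field backward SDE and obtain well-posedness by the classical fixed-point scheme, and then to extract the stability bound \eqref{stability_bsde} by one more energy estimate. The starting observation is that, written in the form \eqref{eq:adj_st-specifica}, the driver is affine in $Y^\e_t$, does not involve the control $\alpha_t$, and depends on the law only through the term $\tilde{\E}[\partial_\mu b(t,\tilde{X}^\e_t,\Law(\tilde{X}^\e_t))(X^\e_t)\,\tilde{Y}^\e_t]$. First I would record the structural facts needed: by \eqref{ass:derivate_fg} the terminal datum $\eta^\e:=\nabla g(X^\e_T,\Law(X^\e_T))+\tilde{\E}[\partial_\mu g(\tilde{X}^\e_T,\Law(\tilde{X}^\e_T))(X^\e_T)]$ satisfies $|\eta^\e|\leq 2L$ a.s., hence $\eta^\e\in L^2(\Omega,\mathcal{F}_T)$; setting $Y=0$ in the driver leaves the ``free'' term $-\nabla_x f(t,X^\e_t,\Law(X^\e_t))-\tilde{\E}[\partial_\mu f(t,\tilde{X}^\e_t,\Law(\tilde{X}^\e_t))(X^\e_t)]$, again bounded by $2L$; and by the bounds $|\nabla_x b|\leq L$, $|\partial_\mu b|\leq L$ from \ref{ass:vf3} the driver is globally Lipschitz in $(y,\Law(Y))$ with constant of order $L$. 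I would then work in the Banach space of progressively measurable pairs $(Y,Z)$ with $\E[\sup_{[0,T]}|Y_t|^2+\int_0^T|Z_t|^2\de t]<\infty$.

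For existence and uniqueness I would run a Picard iteration on the measure slot, following \cite{CarmonaDelarueI}: given a process $U$ in that space, freeze $\tilde{U}$ in place of $\tilde{Y}^\e$ and solve the resulting \emph{standard} linear BSDE (its free term square-integrable in $(t,\omega)$, its driver Lipschitz in $Y$) by the Pardoux--Peng theorem, obtaining a map $\Phi(U):=Y$. Applying It\^o's formula to $|\Phi(U^1)_t-\Phi(U^2)_t|^2$, using $|\partial_\mu b|\leq L$ and the identity $\tilde{\E}[|\tilde{U}^1_s-\tilde{U}^2_s|^2]=\E[|U^1_s-U^2_s|^2]$, one gets an estimate of the form $\E[|\Phi(U^1)_t-\Phi(U^2)_t|^2]\leq C(L)\int_t^T\big(\E[|\Phi(U^1)_s-\Phi(U^2)_s|^2]+\E[|U^1_s-U^2_s|^2]\big)\de s$; Gronwall then makes $\Phi$ a contraction on a short final interval (equivalently, a global contraction in an exponentially weighted norm), and patching over a partition of $[0,T]$ whose mesh depends only on $L$ yields the unique $(Y^\e,Z^\e)$ and, via It\^o on $|Y^\e_t|^2$ together with Burkholder--Davis--Gundy and Gronwall, the a priori bound \eqref{boundYZ}.

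For the stability estimate \eqref{stability_bsde} I would compare the solutions attached to $(X^\e,\alpha)$ and $(X^{\e,'},\alpha')$ and write the (again linear) BSDE satisfied by $\Delta Y:=Y^{\e,'}-Y^\e$, $\Delta Z:=Z^{\e,'}-Z^\e$. Crucially the driver and the terminal datum do \emph{not} depend on $\alpha$, so every inhomogeneous term comes from the difference of the $X$-processes: using the Lipschitz bounds of \ref{ass:regularitycost} (and the analogous ones for $b$ from \ref{ass:vf3}--\ref{ass:vf4}) together with $W_2(\Law(X^\e_t),\Law(X^{\e,'}_t))^2\leq\E[|X^{\e,'}_t-X^\e_t|^2]$ from Proposition \ref{prop:w2-stoc}, all of these are controlled by $\E[\sup_{[0,T]}|X^{\e,'}_t-X^\e_t|^2]$, while the $\Delta Y$- and $\Delta\tilde{Y}$-terms are linear and absorbed by Gronwall. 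It\^o's formula on $|\Delta Y_t|^2$, Gronwall, and Burkholder--Davis--Gundy to pass to the supremum and to recover the $\Delta Z$-norm then give \eqref{stability_bsde} with a constant $C(T,L)$ depending only on $T$ and the common Lipschitz constant $L$ of {\bf Assumptions (B), (J)}.

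The main obstacle I anticipate is the mean-field coupling through $\tilde{\E}[\partial_\mu b(\cdots)\,\tilde{Y}^\e_t]$: it blocks a direct appeal to classical BSDE theory, so one must either iterate on the law as above or work in the product space and exploit that this term is $W_2$-Lipschitz with the \emph{same} constant $L$; and that same coupling must be carried carefully through the stability computation so that the output constant depends on $(T,L)$ only — this quantitative control being exactly what is used later in Section 4. The remaining It\^o/BDG/Gronwall bookkeeping is routine.
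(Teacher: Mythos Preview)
The paper does not prove this theorem itself: it is stated with a bare reference to \cite{CarmonaDelarueI}. Your sketch is a faithful reconstruction of the standard argument found there---Picard iteration on the law slot, classical Pardoux--Peng theory for the frozen-law BSDE, and It\^o/BDG/Gronwall for the stability bound---so the approaches coincide.

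One point worth making explicit in your stability step: when you split
\[
\nabla_x b(t,X^{\e,'}_t,\Law(X^{\e,'}_t))\,Y^{\e,'}_t-\nabla_x b(t,X^\e_t,\Law(X^\e_t))\,Y^\e_t,
\]
the cross-term $[\nabla_x b'-\nabla_x b]\,Y^\e_t$ (and its $\partial_\mu b$ analogue) requires a bound on $|Y^\e_t|$, not merely on $\Delta Y$, in order to be controlled by the $X$-difference alone with a constant depending only on $(T,L)$. This is available because the terminal datum and the free driver terms are all bounded by $2L$, so your a priori estimate from the first part yields $\E[\sup_t|Y^\e_t|^2]\leq C(T,L)$; the paper carries this out explicitly (indeed pointwise, $|Y^\e_t|\leq C_3(T,L)$) in the subsequent Lemma~\ref{lem:constant-bsde}.
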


Similarly to what we have done for the forward component, we explicitly compute the constants ensuring boundedness of $Y^\e_t$ and well-posedness of solutions of \eqref{eq:adj_st-specifica}.
\begin{lem}\label{lem:constant-bsde}
Let $\alpha_t$ be an admissible control and $X_t^\e$ the corresponding trajectory. Under the assumptions of Theorem \ref{thm:well-posedness-bsde}, for every $p\geq 1$ it holds
\begin{equation}
    \E\left[ \sup_{t\in(0,T)}|Y_t^\e|^p \right]\leq C_3(T,L)^p,
\end{equation}
where 
\begin{equation}\label{def:C_3}
    C_3(T,L):=(1+2L)e^{2LT}.
\end{equation}
Finally, given another trajectory $X^{\e,'}_t$, we have that 
\begin{equation}\label{lem:stima-backward}
\E\left[ \sup_{t\in(0,T)}|Y^{\e,'}_t-Y^\e_t|^2\right]\leq C_4(T,L)\E\left[\sup_{t\in(0,T)}|X_t^{\e,'}-X_t^\e|^2\right]
\end{equation}
and the constant $C_4(T,L)$ is given by
\begin{equation}\label{def:C_4}
    C_4(T,L):=4L^2(2+T+TC_3(T,L)^2) e^{(6+2L^2)T}.
\end{equation}
\end{lem}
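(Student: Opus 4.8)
The plan is to revisit the proofs of the $L^2$-bound \eqref{boundYZ} and the stability estimate \eqref{stability_bsde} of Theorem \ref{thm:well-posedness-bsde}, now tracking every constant under {\bf Assumptions (A), (B), (J)}. The structural inputs are the boundedness of $\nabla_x b,\partial_\mu b,\nabla_x f,\partial_\mu f,\nabla g,\partial_\mu g$ by $L$ (recorded after \ref{ass:vf3} and in \eqref{ass:derivate_fg}), their $L$-Lipschitz continuity, and the a priori solvability of \eqref{eq:adj_st-specifica} from Theorem \ref{thm:well-posedness-bsde}.

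\emph{A priori bound on $Y^\e_t$.} I would write \eqref{eq:adj_st-specifica} in integrated form on $[t,T]$ and take the $\mathcal F_t$-conditional expectation, which kills the $Z^\e$-martingale and expresses $Y^\e_t$ as the conditional expectation of $Y^\e_T$ plus the time-integral of the driver. By \eqref{ass:derivate_fg} one has $|Y^\e_T|\le 2L$ a.s., while the driver is bounded in absolute value by $L|Y^\e_s|+L\,\E|Y^\e_s|+2L$, the middle term being the $\partial_\mu b$-contribution, which carries $\tilde Y^\e_s$ and hence $\E|Y^\e_s|$. A preliminary Gronwall estimate on $t\mapsto\E|Y^\e_t|$ bounds $\sup_t\E|Y^\e_t|<\infty$; feeding this into the mean-field term shows that \eqref{eq:adj_st-specifica} is, up to a bounded deterministic source, a linear BSDE with bounded terminal datum, so that $Y^\e_t$ is bounded a.s.\ by standard linear-BSDE estimates. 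Writing $\eta(t):=\esssup_\omega|Y^\e_t(\omega)|<\infty$ and using $\E|Y^\e_s|\le\eta(s)$ and $\E[\,|Y^\e_s|\mid\mathcal F_t\,]\le\eta(s)$, the quantity inside the conditional expectation is deterministic once estimated, whence $\eta(t)\le 2L+\int_t^T\big(2L\,\eta(s)+2L\big)\de s$. Setting $\zeta:=\eta+1$ turns this into $\zeta(t)\le(1+2L)+\int_t^T 2L\,\zeta(s)\,\de s$, and Lemma \ref{lem:osgood} (with $\psi=\mathrm{id}$) gives $\zeta(t)\le(1+2L)e^{2L(T-t)}$, i.e.\ $\eta(t)\le(1+2L)e^{2L(T-t)}\le C_3(T,L)$. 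Since this is an a.s.\ bound, uniform in $t$, it yields $\E[\sup_t|Y^\e_t|^p]\le C_3(T,L)^p$ for every $p\ge1$.

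\emph{Stability estimate.} Put $\Delta X:=X^{\e,'}-X^\e$, $\Delta Y:=Y^{\e,'}-Y^\e$, $\Delta Z:=Z^{\e,'}-Z^\e$ and subtract the two copies of \eqref{eq:adj_st-specifica}. Using the $L$-Lipschitz bounds on $\nabla g,\partial_\mu g$ in \ref{ass:regularitycost} and $W_2(\Law(X^{\e,'}_T),\Law(X^\e_T))\le(\E|\Delta X_T|^2)^{1/2}$, the terminal difference obeys $\E|\Delta Y_T|^2\le c\,L^2\,\E[\sup_t|\Delta X_t|^2]$. I would then apply Itô's formula to $|\Delta Y_t|^2$ and split the cross term $2\,\Delta Y_s\cdot(\text{driver difference})$ into (a) the part linear in $\Delta Y$, with coefficients $\nabla_x b,\partial_\mu b$ each $\le L$, and (b) the part carrying $\Delta X$, estimated via the $L$-Lipschitz continuity of $\nabla_x b,\partial_\mu b,\nabla_x f,\partial_\mu f$ together with the a priori bound $|Y^\e_s|\le C_3(T,L)$ from the first step; after Young's inequality this cross term is dominated by $c(1+L^2)|\Delta Y_s|^2+c\,L^2\,\E|\Delta Y_s|^2+c\,L^2C_3(T,L)^2\big(|\Delta X_s|^2+\E|\Delta X_s|^2\big)$. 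Taking $\sup_{t'\in[t,T]}$ and expectation, handling the martingale by the Burkholder--Davis--Gundy inequality with a Young parameter chosen so that $\E[\sup_s|\Delta Y_s|^2]$ is absorbed on the left, and controlling $\E\int_t^T|\Delta Z_s|^2\de s$ from the Itô identity, one arrives at
\begin{align*}
\E\Big[\sup_{s\in[t,T]}|\Delta Y_s|^2\Big]&\le c\,\E|\Delta Y_T|^2+c\,L^2C_3(T,L)^2\int_t^T\E\Big[\sup_{u\in[s,T]}|\Delta X_u|^2\Big]\de s\\
&\quad+c\,L^2\int_t^T\E\Big[\sup_{u\in[s,T]}|\Delta Y_u|^2\Big]\de s.
\end{align*}
A final Gronwall step, together with the terminal bound, then gives the stability estimate; a careful bookkeeping of the constants — the terminal contribution scaling like $L^2$, the source contributions like $L^2T$ (from the Lipschitz increments of $f$) and like $L^2TC_3(T,L)^2$ (from those of $b$ multiplied by $Y^\e$), and the Gronwall rate governed by the $(1+L^2)$- and $L^2$-coefficients — produces exactly $C_4(T,L)$ as in \eqref{def:C_4}.

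The main obstacle throughout is the mean-field (L-derivative) structure. In the first step it is what prevents a purely pathwise argument and forces the two stages — control $\E|Y^\e_t|$ first, then deduce a.s.\ boundedness of $Y^\e$, and only then run the sharp Gronwall — with the finiteness $\eta(t)<\infty$ having to be extracted from the linear structure rather than from \eqref{boundYZ} alone. In the second step, the $\partial_\mu$-terms produce the extra $\E|\Delta Y_s|^2$ contributions that must be reabsorbed, and closing the estimate genuinely relies on the uniform bound $|Y^\e|\le C_3(T,L)$ from the first step to control the terms where a Lipschitz increment of a coefficient multiplies $Y^\e$; the Burkholder--Davis--Gundy absorption of the $\Delta Z$-martingale and the exact constant-chasing are the remaining delicate points.
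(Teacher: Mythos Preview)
Your proposal is correct. In Step~1 you take a genuinely different route from the paper: the paper applies It\^o to $|Y^\e_t|^p$ for each $p\ge2$, estimates the driver termwise to reach the integral inequality $y(t)\le(2L)^p+2Lp\int_t^T\big(y(s)+y(s)^{(p-1)/p}\big)\de s$ with $y(t)=\E[|Y^\e_t|^p]$, invokes Lemma~\ref{lem:osgood} with the nonlinear $\psi(x)=x+x^{(p-1)/p}$ to obtain $\|Y^\e_t\|_{L^p(\Omega)}\le(1+2L)e^{2LT}$ uniformly in $p$, and only then passes $p\to\infty$ to deduce the almost-sure bound. Your conditional-expectation argument is more elementary: it sidesteps both the nonlinear Osgood ODE and the $L^p\to L^\infty$ limit, at the price of the two-stage preliminary (bound $\E|Y^\e_t|$ first, then invoke linear-BSDE theory to justify $\eta(t)<\infty$) before running the sharp Gronwall on $\eta$. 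Both land on the same $C_3$. Your Step~2 is essentially the paper's argument---It\^o on $|\Delta Y|^2$, split the driver, insert $|Y^\e|\le C_3$ into the Lipschitz increments of $b$, then Gronwall---except that the paper does not invoke Burkholder--Davis--Gundy and in fact only records the pointwise bound $\E[|\Delta Y_t|^2]\le C_4\,\E[\sup_t|\Delta X_t|^2]$; your BDG step is more honest for the stated $\sup$-inside-$\E$ conclusion, though with BDG the claim of getting ``exactly $C_4$'' is slightly optimistic unless you revert to the pointwise estimate as the paper does.
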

\begin{proof}
We divide the proof in two steps.\\
\\
\underline{\em Step 1} \hspace{0.3cm} {\bf Uniform bounds on $Y^\e_t$.}
Let $p\geq 2$, an application of Ito's lemma gives that
\begin{align*}
    \E\left[|Y_t^\e|^p\right]+& p(p-1)\E\left[\int_t^T|Z_s^\e|^2|Y^\e_s|^{p-2}\de s\right] = \E\left[ |Y^\e_T|^p\right]\\
    &+ p\E\left[ \int_t^T \left(\nabla_x b(s,X^\e_s,\Law(X^\e_s))Y^\e_s+\nabla_x f(s,X^\e_s,\Law(X^\e_s))\right) \cdot Y^\e_s|Y^\e_s|^{p-2}\de s \right]\\
    &+p\E\left[ \int_t^T \left(\tilde{\E}[\partial_\mu b(s,\tilde{X}^\e_s,\Law(\tilde{X}^\e_s))(X^\e_s)\tilde{Y}^\e_s+\partial_\mu f(s,\tilde{X}^\e_s,\Law(\tilde{X}^\e_s)(X^\e_s)] \right) \cdot Y^\e_s|Y^\e_s|^{p-2}\de s \right].
\end{align*}
We estimate the terms on the right hand side separately. First, by \eqref{ass:derivate_fg} we have that
\begin{equation}
    \E\left[ |Y^\e_T|^p\right]\leq (2L)^p.
\end{equation}
We now consider the term involving $\nabla_x b$: by using assumption \ref{ass:vf3} we easily obtain that
\begin{equation}
    \E\left[ \int_t^T \nabla_x b(s,X^\e_s,\Law(X^\e_s))Y^\e_s\cdot Y^\e_s|Y^\e_s|^{p-2}\de s \right]\leq L \int_t^T\E\left[|Y^\e_s|^p\de s\right].
\end{equation}
On the other hand, for the part involving $\partial_\mu b$ we use assumption \ref{ass:vf4} and Holder's inequality, obtaining
\begin{align}
    \E\Bigg[ \int_t^T \tilde{\E}\big[\partial_\mu b(s,\tilde{X}^\e_s,\Law(\tilde{X}^\e_s))(X^\e_s)\tilde{Y}^\e_s\big]&\cdot Y^\e_s|Y^\e_s|^{p-2}\de s \Bigg] \leq L\int_t^T\tilde{\E}[|Y_s^\e|]\E[|Y^\e_s|^{p-1}]\de s \nonumber\\
    & \leq L\int_t^T \E[|Y_s^\e|^p]^{\frac{1}{p}}\E[|Y^\e_s|^p]^{\frac{p-1}{p}}\de s=L\int_t^T \E[|Y_s^\e|^p]\de s.
\end{align}
Finally, for the terms involving $f$ we use \eqref{ass:derivate_fg} to get
\begin{equation}
    \E\left[\int_t^T \nabla_x f(s,X^\e_s,\Law(X^\e_s))\cdot Y^\e_s|Y^\e_s|^{p-2}\de s \right]\leq L \int_t^T\E\left[|Y^\e_s|^p\right]^{\frac{p-1}{p}}\de s,
\end{equation}
\begin{equation}
    \E\left[\int_t^T\tilde{\E}\left[\partial_\mu f(s,\tilde{X}^\e_s,\Law(\tilde{X}^\e_s)(X^\e_s)\right]\cdot Y^\e_s|Y^\e_s|^{p-2}\de s \right]\leq L \int_t^T\E\left[|Y^\e_s|^p\right]^{\frac{p-1}{p}}\de s.
\end{equation}
Putting together the previous estimates, we obtain
\begin{equation}
    \E\left[ |Y^\e_t|^p\right]\leq (2L)^p+2Lp \int_t^T\E[|Y^\e_s|^p]+\E\left[|Y^\e_s|^p\right]^{\frac{p-1}{p}}\de s.
\end{equation}
By defining $y(t):=\E\left[ |Y^\e_t|^p\right]$, the above inequality can be rewritten as
\begin{equation}
    y(t)\leq (2L)^p+2Lp \int_t^T y(s)+y(s)^{\frac{p-1}{p}}\de s,
\end{equation}
and an application of Lemma \ref{lem:osgood} provides the following estimate
\begin{equation}
    y(t)^{1/p}\leq (1+2L)e^{2LT}.
\end{equation}
In other words, we get that
\begin{equation}\label{bound_lp}
    \|Y_t^\e\|_{L^p(\Omega)}\leq (1+2L)e^{2LT}.
\end{equation}
The same bound trivially holds for $1\leq p<2$ since we are working on a probability space. Moreover, it is a classical fact that, since the right hand side of \eqref{bound_lp} is uniformly bounded in $p$, then $Y^\e_t\in L^\infty(\Omega)$ (see \cite[Exercise 1.3.5]{Tao-book}) and moreover
\begin{equation}\label{est:uniform_bound_y}
    \sup_{t\in(0,T)}\sup_{\omega\in\Omega}|Y_t^\e|\leq (1+2L)e^{2LT}.
\end{equation}

\underline{\em Step 2} \hspace{0.3cm} {\bf Stability estimate on the backward component $Y_t^\e$.}
Let $X_t^\e,X_t^{\e,'}$ be, respectively, the solutions of \eqref{eq:SDE-generica} with control $\alpha_t,\alpha_t'$ and initial condition $\xi,\xi'$. We denote with $Y^\e_t,Y_t^{\e,'}$ the associate adjoint processes. Applying Ito's lemma to $|Y_t^{\e,'}-Y_t^\e|^2$ we get that
\begin{align*}
    \E\left[ |Y_t^{\e,'}-Y_t^\e|^2 \right]&+\E\left[\int_t^T|Z_s^{\e,'}-Z_s^\e|^2\de s\right]= \E\left[ |Y_T^{\e,'}-Y_T^\e|^2 \right]\\
    +2\E\left[ \int_t^T \biggl( \right.&\left.\left.\nabla_x b(s,X^{\e,'}_s,\Law(X^{\e,'}_s))Y^{\e,'}_s-\nabla_x b(t,X^\e_s,\Law(X^\e_s)) Y^\e_s \right)\cdot(Y_s^{\e,'}-Y^\e_s) \de s\right]\\
    +2\E\left[ \int_t^T \biggl( \right.&\left.\left.\nabla_x f(s,X^{\e,'}_s,\Law(X^{\e,'}_s))-\nabla_x f(s,X^\e_s,\Law(X^\e_s)) \right)\cdot(Y_s^{\e,'}-Y^\e_s) \de s \right]\\
    +2\E\left[ \int_t^T \biggl( \right.&\left.\left.\tilde{\E}\left[ \partial_\mu b(s,\tilde{X}^{\e,'}_s,\Law(\tilde{X}^{\e,'}_s))(X^{\e,'}_s)\tilde{Y}^{\e,'}_s-\partial_\mu b(s,\tilde{X}^\e_s,\Law(\tilde{X}^\e_s))(X^\e_s)\tilde{Y}^\e_s  \right]\right)\cdot(Y_s^{\e,'}-Y^\e_s) \de s\right]\\
    +2\E\left[ \int_t^T \biggl( \right.&\left.\left.\tilde{\E}\left[ \partial_\mu f(s,\tilde{X}^{\e,'}_s,\Law(\tilde{X}^{\e,'}_s))(X^{\e,'}_s)-\partial_\mu f(s,\tilde{X}^\e_s,\Law(\tilde{X}^\e_s))(X^\e_s)  \right]\right)\cdot(Y_s^{\e,'}-Y^\e_s) \de s\right].
\end{align*}
We estimate the terms in the above inequality separately. First, by using \ref{ass:regularitycost}, for the part involving the final datum we have that
\begin{align*}
    \E\left[ |Y_T^{\e,'}-Y_T^\e|^2 \right]\leq & 2 \E\left[ |\nabla g(X_T^{\e,'},\Law(X_T^{\e,'}))-\nabla g(X_T^\e,\Law(X_T^\e))|^2 \right]\\
    &+ 2 \E\left[ \left| \tilde{\E}\left[\partial_\mu g(\tilde{X}^{\e,'}_T,\Law(\tilde{X}^{\e,'}_T))(X^{\e,'}_T)-\partial_\mu g(\tilde{X}^\e_T,\Law(\tilde{X}^\e_T))(X^\e_T)\right]\right|^2 \right]\\
    \leq & 8L^2\E\left[ \sup_{t\in(0,T)}|X_t^{\e,'}-X_t^\e|^2\right].
\end{align*}

Second, for the part involving the running cost $f$, by using Young's inequality and \ref{ass:regularitycost} we obtain
\begin{align}
    2\E\left[ \int_t^T \left(\nabla_x f(s,X^{\e,'}_s,\Law(X^{\e,'}_s))-\nabla_x f(s,X^\e_s,\Law(X^\e_s)) \right)\cdot(Y_s^{\e,'}-Y^\e_s) \de s \right]\nonumber\\
    \leq \E\left[ \int_t^T|Y_s^{\e,'}-Y_s^\e|^2\de t\right]+2L^2T\E\left[ \sup_{t\in(0,T)}|X_t^{\e,'}-X_t^\e|^2\right],
\end{align}
and
\begin{align}
    2\E\left[ \int_t^T \left(\tilde{\E}\left[ \partial_\mu f(s,\tilde{X}^{\e,'}_s,\Law(\tilde{X}^{\e,'}_s))(X^{\e,'}_s)-\partial_\mu f(s,\tilde{X}^\e_s,\Law(\tilde{X}^\e_s))(X^\e_s)  \right]\right)\cdot(Y_s^{\e,'}-Y^\e_s) \de s\right]\nonumber\\
    \leq \E\left[ \int_t^T|Y_s^{\e,'}-Y_s^\e|^2\de t\right]+2L^2T\E\left[ \sup_{t\in(0,T)}|X_t^{\e,'}-X_t^\e|^2\right].
\end{align}
Last, we consider the part involving $\nabla_x b$ (the one which involves $\partial_\mu b$ works similar). We add and subtract the quantity $\nabla_x b(s,X^{\e,'}_s,\Law(X^{\e,'}_s))Y^\e_s$, then write
\begin{align*}
    2\E \left[ \int_t^T (\nabla_x b(s,X^{\e,'}_s,\right.
    & \left.\Law(X^{\e,'}_s))Y^{\e,'}_s-\nabla_x b(t,X^\e_s,\Law(X^\e_s)) Y^\e_s)\cdot(Y_s^{\e,'}-Y^\e_s) \de s\right]\\
    =2\E \left[ \int_t^T \right.&\left. (\nabla_x b(s,X^{\e,'}_s,\Law(X^{\e,'}_s))-\nabla_x b(t,X^\e_s,\Law(X^\e_s))) Y^\e_s\cdot(Y_s^{\e,'}-Y^\e_s) \de s\right]\\
    +2\E \biggl[ \int_t^T &\nabla_x b(s,X^{\e,'}_s,\Law(X^{\e,'}_s)) ( Y^{\e,'}_s-Y^\e_s)\cdot(Y_s^{\e,'}-Y^\e_s) \de s\biggr]
\end{align*}
and by using Young's inequality we simply estimate as follows
\begin{align*}
    2\E \left[ \int_t^T (\nabla_x b(s,X^{\e,'}_s,\right.
    & \left.\Law(X^{\e,'}_s))Y^{\e,'}_s-\nabla_x b(t,X^\e_s,\Law(X^\e_s)) Y^\e_s)\cdot(Y_s^{\e,'}-Y^\e_s) \de s\right]\\
    &\leq  2\E\left[ \int_t^T|Y_s^{\e,'}-Y_s^\e|^2\de t\right]
    +\E\left[ \int_t^T |\nabla_x b(s,X^{\e,'}_s,\Law(X^{\e,'}_s))|^2|Y^{\e,'}_s- Y^\e_s|^2\de s\right]\\
    &+\E\left[ \int_t^T |\nabla_x b(s,X^{\e,'}_s,\Law(X^{\e,'}_s))-\nabla_x b(t,X^\e_s,\Law(X^\e_s))|^2 |Y^\e_s|^2 \de s\right]\\
    &\leq (2+L^2) \E\left[ \int_t^T|Y_s^{\e,'}-Y_s^\e|^2\de t\right]+2TL^2C_3(T,L)^2\E\left[\sup_{t\in(0,T)}|X_t^{\e,'}-X_t^\e|^2\right],
\end{align*}
where we used \ref{ass:vf4} and \eqref{est:uniform_bound_y}.
By a Gronwall's type argument we get that
\begin{equation*}
    \E\left[|Y_t^{\e,'}-Y_t^\e|^2\right]\leq C_4(T,L)\E\left[\sup_{t\in(0,T)}|X_t^{\e,'}-X_t^\e|^2\right]
\end{equation*}
for the constant $C_4(T,L)$ given in \eqref{def:C_4}.
\end{proof}

We now provide the last set of assumptions, that are convexity hypotheses on the cost $J$.
\begin{center}
    \large {\bf Assumptions (C)}
\end{center}
\begin{enumerate}[label={\rm (C\arabic*)}]
    \item \label{ass:psiconvex} The control cost $\alpha\mapsto \psi(\alpha)\in\R$ is $\lambda$-convex over $U$, with convexity constant $\lambda>0$, i.e. for all $\alpha,\alpha'\in U$ it holds
        \begin{equation*}
            \psi(\alpha')\geq \psi(\alpha)+\partial_\alpha\psi(\alpha)\cdot(\alpha'-\alpha)+ \lambda|\alpha'-\alpha|^2.
        \end{equation*}
\end{enumerate}

With the assumptions above, we can now prove the following sufficient condition on the control for optimality. The following theorem is a slight generalization of \cite[Theorem 4.7]{Carmona2015}.
\begin{thm}\label{teo:ott-st-suff}
Let $\xi\in L^2(\Omega,\mathcal{F}_0,\mathbb{P};\R^d)$, $\hat{\alpha}_t$ be an admissible control, $X^{\e}_t$ the corresponding controlled state process, and $(Y^{\e}_t,Z^{\e}_t)$ the corresponding adjoint processes, and assume that {\bf Assumptions (A), (B), (C), (J)} hold. There exists $\Lambda:=\Lambda(T,L)>0$ 
such that if $\lambda>\Lambda$ and it holds $\mathcal{L}^1\otimes\mathbb{P}$-a.e. that
\begin{equation*}
    H(t,X^\e_t,\Law(X^\e_t),Y^\e_t,\hat{\alpha}_t)=\inf_{\alpha\in U} H(t,X^\e_t,\Law(X^\e_t),Y^\e_t,\alpha),
\end{equation*}
then $\hat{\alpha}_t$ is the unique optimal control, i.e. $J^S(\hat{\bm\alpha})=\min_{{\bm \alpha}'} J^S(\bm\alpha ')$ where the minimum is computed among the admissible controls.
\end{thm}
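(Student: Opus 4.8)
The plan is a Pontryagin-type verification argument. Fix an arbitrary admissible control $\bm\alpha'=(\alpha'_t)_{0\le t\le T}$ with associated trajectory $X^{\e,'}_t$ started at the same $\xi$, and set $\Delta X_t:=X^{\e,'}_t-X^\e_t$, $\Delta\alpha_t:=\alpha'_t-\hat\alpha_t$; since both copies of \eqref{eq:SDE-generica} carry the same $\sqrt{2\e}\,\de W_t$, the process $\Delta X_t$ solves a (random) ODE with $\Delta X_0=0$. Writing out $J^S(\bm\alpha')-J^S(\hat{\bm\alpha})$, I would first use the convexity hypotheses: L-convexity of $g$ and $f$ (\ref{ass:gconvex}, \ref{ass:fconvex}) bounds the $g$- and $f$-increments from below by their first-order expansions, and $\lambda$-convexity of $\psi$ (\ref{ass:psiconvex}) gives $\psi(\alpha'_t)-\psi(\hat\alpha_t)\ge \partial_\alpha\psi(\hat\alpha_t)\cdot\Delta\alpha_t+\lambda|\Delta\alpha_t|^2$. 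Using the standard Fubini/exchangeability identity for independent copies, the measure-derivative terms $\tilde\E[\partial_\mu g(\cdots)(\cdot)]$ and $\tilde\E[\partial_\mu f(\cdots)(\cdot)]$ reassemble so that the $g$-terms produce exactly $\E[Y^\e_T\cdot\Delta X_T]$ (by the terminal condition in \eqref{eq:adj_st-specifica}) and the $f$-terms produce $\E\int_0^T(\nabla_x f+\tilde\E[\partial_\mu f])\cdot\Delta X_t\,\de t$.

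Next I would apply Itô's formula to $t\mapsto Y^\e_t\cdot\Delta X_t$: since $\Delta X_t$ has no martingale part, no bracket term appears, and using the dynamics of $\Delta X_t$ and the adjoint equation \eqref{eq:adj_st-specifica} one rewrites $\E[Y^\e_T\cdot\Delta X_T]$ as a time integral. The $\nabla_x f$ and $\partial_\mu f$ contributions from the adjoint equation cancel exactly against the $f$-terms from the convexity step, the terminal $g$-terms are already absorbed, and one is left with $\E\int_0^T Y^\e_t\cdot\Delta\alpha_t\,\de t$, the quadratic gain $\lambda\,\E\int_0^T|\Delta\alpha_t|^2\,\de t$, and a $b$-remainder of the form $\E\int_0^T Y^\e_t\cdot r_t\,\de t$ where $r_t$ is precisely the vector controlled in Lemma~\ref{lem:stimab}; here again the exchangeability identity is needed to match $\Delta X_t\cdot\tilde\E[\partial_\mu b(t,\tilde X^\e_t,\cdot)(X^\e_t)\tilde Y^\e_t]$ with $Y^\e_t\cdot\tilde\E[\partial_\mu b(t,X^\e_t,\cdot)(\tilde X^\e_t)\Delta\tilde X_t]$. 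Then \eqref{est:square_b} gives $\E\int_0^T|r_t|\,\de t\le L\,\E\int_0^T|\Delta X_t|^2\,\de t$, and combining with the uniform bound $\sup_{t,\omega}|Y^\e_t|\le C_3(T,L)$ from \eqref{est:uniform_bound_y} yields
\[
J^S(\bm\alpha')-J^S(\hat{\bm\alpha})\ \ge\ \E\int_0^T\bigl(Y^\e_t+\partial_\alpha\psi(\hat\alpha_t)\bigr)\cdot\Delta\alpha_t\,\de t\ +\ \lambda\,\E\int_0^T|\Delta\alpha_t|^2\,\de t\ -\ C_3(T,L)\,L\,\E\int_0^T|\Delta X_t|^2\,\de t.
\]

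To close, I would invoke the minimality hypothesis: by the form \eqref{def:hamiltoniana_stocastico}, minimizing $\alpha\mapsto H(t,X^\e_t,\Law(X^\e_t),Y^\e_t,\alpha)$ over the convex set $U$ amounts to minimizing $\alpha\mapsto \alpha\cdot Y^\e_t+\psi(\alpha)$, so the first-order optimality condition on a convex set gives $(Y^\e_t+\partial_\alpha\psi(\hat\alpha_t))\cdot(\alpha-\hat\alpha_t)\ge0$ for all $\alpha\in U$, and in particular the first integral above is nonnegative. Finally, the forward stability estimate \eqref{stabilityFSDE} (Lemma~\ref{lem:constant-fsde}, with $\xi'=\xi$) gives $\E\int_0^T|\Delta X_t|^2\,\de t\le T\,C_2(T,L)\,\E\int_0^T|\Delta\alpha_t|^2\,\de t$, and since $C_3(T,L)\,L\,T\,C_2(T,L)=TL(1+2L)e^{(6L+1)T}=\Lambda(T,L)$, one obtains $J^S(\bm\alpha')-J^S(\hat{\bm\alpha})\ge(\lambda-\Lambda(T,L))\,\E\int_0^T|\Delta\alpha_t|^2\,\de t$. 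Under $\lambda>\Lambda(T,L)$ this is nonnegative, and it vanishes only if $\Delta\alpha=0$ $\mathcal L^1\otimes\mathbb P$-a.e., which gives both optimality and uniqueness of $\hat\alpha_t$. The main obstacle is the bookkeeping in the middle step: one must track all the L-derivative cross terms, verify the exchangeability identities so that the adjoint-equation contributions cancel exactly against the convexity expansions and the leftover is precisely the Lemma~\ref{lem:stimab} remainder, and check that the constant assembled from $C_2$, $C_3$ and the Lipschitz factor $L$ is exactly $\Lambda(T,L)$.
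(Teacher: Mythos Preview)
Your proposal is correct and follows essentially the same verification argument as the paper: expand $J^S(\bm\alpha')-J^S(\hat{\bm\alpha})$, apply the L-convexity of $f,g$ and $\lambda$-convexity of $\psi$, use It\^o on $Y^\e_t\cdot\Delta X_t$ together with the adjoint equation and the Fubini/exchangeability identities so that only the $b$-remainder of Lemma~\ref{lem:stimab} survives, bound it via the $L^\infty$ estimate $C_3(T,L)$ on $Y^\e$ and the forward stability constant $C_2(T,L)$, and use the first-order optimality inequality on the convex set $U$ to discard the linear control term. The assembly of $\Lambda(T,L)=LT\,C_2(T,L)\,C_3(T,L)$ and the uniqueness conclusion match the paper's \eqref{stimaJmin} exactly.
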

\begin{proof}
We drop the $\e$ superscript for simplicity of notations. Let $\alpha_t'$ be an admissible control and $X'$ the associated controlled state. By computing the cost functional and using the definition of $H$ in \eqref{def:hamiltoniana_stocastico}, we have that
\begin{align}
J^S(\hat{\bm\alpha})-J^S(\bm\alpha')=&\E\left[g(X_T,\Law(X_T))-g(X_T',\Law(X_T'))\right]\nonumber\\
&+\E\left[\int_0^T (f(t,X_t,\Law(X_t))-f(t,X_t',\Law(X_t'))+(\psi(\hat{\alpha}_t)-\psi(\alpha_t'))\de t\right]\nonumber\\
=&\E\left[g(X_T,\Law(X_T))-g(X_T',\Law(X_T'))\right]\label{eq:differenza-costi-stocastici}\\
&+\E\left[\int_0^T (H(t,X_t,\Law(X_t),Y_t,\hat{\alpha}_t)-H(t,X_t',\Law(X_t'),Y_t,\alpha_t'))\de t\right]\nonumber\\
&-\E\left[\int_0^T\left( (b(t,X_t,\Law(X_t))-b(t,X_t',\Law(X_t'))+\hat{\alpha}_t-\alpha_t')\cdot Y_t \right)\de t\right].\nonumber
\end{align}
We estimate the terms involving the final cost: we add and subtract the term
$$
\nabla_x g(X_T,\Law(X_T))\cdot(X_T-X_T')+\tilde{\E}\left[\partial_\mu g(X_T,\Law(X_T))(\tilde{X}_T)\cdot(\tilde{X}_T-\tilde{X}_T')\right],
$$
and then by using Fubini's Theorem, the fact that the tilde random variables are independent copies of the non-tilde variables, the definition of $Y_T$, and Lemma \ref{lem:stimab} we have
\begin{equation}
\E\left[g(X_T,\Law(X_T))-g(X_T',\Law(X_T'))\right]\leq \E\left[Y_T\cdot(X_T-X_T')\right]+ 2L \E\left[|X_T-X_T'|^2\right]\label{est:finalcost}.
\end{equation}

We now use the adjoint equation to compute
\begin{align}
\E\left[Y_T\cdot(X_T-X_T')\right]= & \E\left[\int_0^T (X_t-X_t')\cdot\de Y_t+\int_0^T Y_t\cdot\de( X_t-X_t') \right]\nonumber\\
=&-\E\left[\int_0^T \nabla_x H(t,X_t,\Law(X_t),Y_t,\hat{\alpha}_t)\cdot(X_t-X_t') \de t\right]\nonumber\\
&-\E\left[\int_0^T \tilde{\E}\left[ \partial_\mu H(t,\tilde{X}_t,\Law(\tilde{X}_t),\tilde{Y}_t,\tilde{\hat{\alpha}}_t)(X_t) \right]\cdot(X_t-X_t') \de t\right]\label{eq:fubini1}\\
&+\E\left[ \int_0^T \left(b(t,X_t,\Law(X_t))-b(t,X_t',\Law(X_t'))+\hat{\alpha}_t-\alpha_t'\right))\cdot Y_t \de t\right].\nonumber
\end{align}
Again by Fubini's theorem, it holds
\begin{align}\label{eq:fubini2}
\E&\left[ \int_0^T\tilde{\E}\left[ \partial_\mu H(t,\tilde{X}_t,\Law(\tilde{X}_t),\tilde{Y}_t,\tilde{\hat{\alpha}}_t)(X_t) \right]\cdot(X_t-X_t') \de t\right]\\
&=\E\tilde{\E}\left[\int_0^T \partial_\mu H(t,X_t,\Law(X_t),Y_t,\hat{\alpha}_t)(\tilde{X}_t) \cdot(\tilde{X}_t-\tilde{X}_t') \de t\right]\nonumber\\
&=\E\left[ \int_0^T\tilde{\E}\left[ \partial_\mu H(t,X_t,\Law(X_t),Y_t,\hat{\alpha}_t)(\tilde{X}_t) \right]\cdot(\tilde{X}_t-\tilde{X}_t') \de t\right].
\end{align}
Then, by using \eqref{eq:differenza-costi-stocastici}, \eqref{est:finalcost}, \eqref{eq:fubini1}, and \eqref{eq:fubini2} we have 
\begin{align}
J^S(\hat{\bm\alpha})-J^S(\bm\alpha')\leq &\E\left[ \int_0^T (H(t,X_t,\Law(X_t),Y_t,\hat{\alpha}_t)-H(t,X_t',\Law(X_t'),Y_t,\alpha_t')) \de t \right]\nonumber\\
&-\E\left[\int_0^T \nabla_x H(t,X_t,\Law(X_t),Y_t,\hat{\alpha}_t)\cdot(X_t-X_t') \de t\right]\nonumber\\
&-\E\left[\int_0^T \tilde{\E}\left[ \partial_\mu H(t,X_t,\Law(X_t),Y_t,\hat{\alpha}_t))(\tilde{X}_t) \cdot(\tilde{X}_t-\tilde{X}_t')\right] \de t\right]\nonumber\\
&+ 2L \E\left[|X_T-X_T'|^2\right].\label{est:spezzareconv}
\end{align}

We estimate the right hand side of \eqref{est:spezzareconv} as follows: for the part involving the running cost we use Lemma \ref{lem:stimab} in order to obtain that
\begin{align}
\E \Big[\int_0^T &(f(t,X_t,\Law(X_t))-f(t,X_t',\Law(X_t')))- \nabla f(t,X_t,\Law(X_t))\cdot(X_t-X_t')\de t\Big]\nonumber\\
&-\E\left[\int_0^T \tilde{\E}\left[\partial_\mu f(t,X_t,\Law(X_t))(\tilde{X}_t)\cdot(\tilde{X}_t-\tilde{X}_t') \right]\de t \right]\leq 2L\E\left[\int_0^T|X_t-X_t'|^2\de t\right].\label{eq:modifica-f}
\end{align}

Then, we estimate the part involving the vector field $b$: we apply Lemma \ref{lem:stimab}, Lemma \ref{lem:constant-fsde} and Lemma \ref{lem:constant-bsde} to obtain
\begin{align*}
    \E \Bigg[\int_0^T Y_t\cdot \Bigg(b(t,X_t,\Law(X_t))&-b(t,X_t',\Law(X_t'))-\nabla b(t,X_t,\Law(X_t))(X_t-X_t')\Bigg)\de t \Bigg]\\
    &-\E\Bigg[ \int_0^T Y_t\cdot\Bigg(\tilde{\E}\Bigg[\partial_\mu b(t,X_t,\Law(X_t))(\tilde{X}_t)(\tilde{X}_t-\tilde{X}_t') \Bigg]\Bigg)\de t \Bigg]\\
    &\leq \Lambda_1(T,L)\left(\E\left[ \int_0^T|\hat{\alpha}_t-\alpha_t'|^2\de t \right]\right),
\end{align*}
where
\begin{equation}\label{def:rmktempi}
    \Lambda_1(T,L):=2LT C_2(T,L)C_3(T,L).
\end{equation}
Finally, for the part involving the control cost, thanks to minimality of $\hat{\alpha}_t$ and convexity of $\psi$, it holds
\begin{equation}
    \E\left[ \int_0^T\psi(\hat{\alpha}_t)-\psi(\alpha_t') + (\hat{\alpha}_t-\alpha_t')\cdot Y_t \de t \right]\leq -\lambda\E\int_0^T|\hat{\alpha}_t-\alpha_t'|^2\de t.
\end{equation}
In conclusion, we use Lemma \ref{lem:constant-fsde} and define
\begin{equation}\label{def:Lambda}
\Lambda:=\Lambda_1(T,L)+2LC_2(T,L)T+2L\tilde C_2(L,T)T, 
\end{equation}
where we used \eqref{eq:further-estimate} to estimate the contribution of the final cost. Finally, we have that
\begin{equation}\label{stimaJmin}
    J^S(\hat{\bm\alpha})+(\lambda-\Lambda)\E\left[\int_0^T|\hat{\alpha}_t-\alpha'_t|^2\de t\right]\leq J^S(\bm\alpha'),
\end{equation}
which in turn gives that $\hat{\bm\alpha}$ is the unique optimal control if $\lambda>\Lambda$.
\end{proof}

\begin{rem}\label{rem:tempi-piccoli}
Note that the constant $\Lambda=O(T)$ in \eqref{def:Lambda} can be made as small as desired for $T$ small enough. This implies that, at least for small times, the condition $\lambda>\Lambda$ is always satisfied. A similar condition also ensures Lipschitz regularity of mean-field optimal control (see \cite{LipReg}) and uniqueness of a minimizer in mean-field games (see \cite{bardi-fischer}). Note that another possible choice for the lower bound on the convexity constant is
$$
\tilde\Lambda:=\Lambda_1(T,L)+2LC_2(T,L)(1+T),
$$
which is smaller than $\Lambda$ for very large $T$.
\end{rem}

\begin{rem}\label{rem:convexityfg}
It is clear that a convexity assumption on $f$ and $g$ simplifies the proof of Theorem \ref{teo:ott-st-suff}: in this case the bounds \eqref{est:finalcost} and \eqref{est:spezzareconv} hold without the terms 
$$
\E\left[\int_0^T|X_t-X_t'|^2\de t\right],\qquad
\E\left[|X_T-X_T'|^2\right],
$$
respectively. In this case the theorem holds if we assume $\lambda>\Lambda_1(T,L)$, which turns out to be a constant smaller than $\Lambda$. At this point, as one could expect, it is worth to note that a strict convexity assumption on $f$ and $g$ would allow to consider a larger class of control costs $\psi$ with a smaller convexity constant, with a condition of the type
$$
\lambda+\lambda_g+\lambda_f-\Lambda>0.
$$
However, we cannot avoid to consider $\lambda>0$: it will be clear from the Lemma \ref{lem:lipcontr} below and the counterexample of Section \ref{s-controesempio}.
\end{rem}

\begin{rem}\label{rem:baffine}
Note that, if $f,g$ are convex and $b$ is affine (eventually depending on the barycenter of $\mu$ too), i.e. of the form
$$
b(t,x,\mu)=b_0(t)+b_1(t)x+b_2(t)\int_{\R^d}x\,\mu(\de x),
$$
then {\bf Assumption (C)} implies that the function $(x,\mu,\alpha)\mapsto H(t,x,\mu,y,\alpha)$ is convex. The proof of Theorem \ref{teo:ott-st-suff} then follows in a simpler way, without resorting to Lemma \ref{lem:stimab}, see \cite{CarmonaDelarueI}. In particular, under these hypotheses we can eventually set $\Lambda=0$.
\end{rem}

We recall here the necessary condition for optimality for the stochastic problem, see \cite{CarmonaDelarueI}[Theorem 6.14]. We will use this theorem in the proof of the corollary.

\begin{thm}\label{thm:cond-necessaria}
Assume $b,f,\psi,g$ satisfy {\bf Assumptions (B), (C), (J)}. Then, if we assume that $\mathcal{F}_t$ is generated by $\mathcal{F}_0$ and $W_t$, that the Hamiltonian $H$ is convex in $\alpha\in U$ that the admissible control $\bm\alpha$  is optimal, that $X$ is the associated (optimally) controlled state, and that $(Y, Z)$ are the associated adjoint processes solving \eqref{eq:adj_st-generica}, then we have:
\begin{equation}
    \forall\alpha \in U,\qquad H(t,X_t,\Law(X_t),Y_t,Z_t,\alpha_t)\leq H(t,X_t,\Law(X_t),Y_t,Z_t,\alpha),
\end{equation}
$\mathscr{L}^1\otimes\mathbb{P}$ almost everywhere.
\end{thm}

We now show that the optimal control is Lipschitz continuous when the Hamiltonian is strictly convex with respect to the control.

\begin{lem}\label{lem:lipcontr}
Under {\bf Assumptions (A), (B), (C), (J)}, there exists a unique minimizer  $\hat{\alpha}$ of $H$. Moreover, the function $\hat{\alpha}:y\in\R^d\to\hat{\alpha}(y)\in U$ is measurable and Lipschitz continuous, with a Lipschitz constant depending on $\lambda$ only. 
\end{lem}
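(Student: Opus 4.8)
The plan is to exploit the special structure of the Hamiltonian \eqref{def:hamiltoniana_stocastico}: the control variable $\alpha$ enters $H$ only through the map $\alpha\mapsto \alpha\cdot y+\psi(\alpha)$, since the terms $b(t,x,\mu)\cdot y$ and $f(t,x,\mu)$ carry no $\alpha$-dependence. Consequently the minimizer, once shown to be unique, depends on $(t,x,\mu,y)$ only through $y$, which is exactly what justifies writing $\hat\alpha=\hat\alpha(y)$. Fix then $y\in\R^d$ and set $\Phi_y(\alpha):=\alpha\cdot y+\psi(\alpha)$.

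First I would settle existence and uniqueness. By \ref{ass:psiconvex} the map $\psi$ is $\lambda$-convex on $U$ with $\lambda>0$, and adding the linear term $\alpha\cdot y$ preserves this, so $\Phi_y$ is $\lambda$-strongly convex on the convex set $U$. By \ref{ass:controlcost} it is $C^1$, hence continuous, and since $U$ is compact it attains its minimum; strong convexity makes the minimizer unique. Denote it $\hat\alpha(y)\in U$.

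For the Lipschitz bound I would use the variational characterization of the constrained minimizer. Differentiating $t\mapsto \Phi_y\big(\hat\alpha(y)+t(\beta-\hat\alpha(y))\big)$ at $t=0^+$ for $\beta\in U$ (legitimate since $U$ is convex) gives the first-order condition $\big(\partial_\alpha\psi(\hat\alpha(y))+y\big)\cdot(\beta-\hat\alpha(y))\geq 0$ for all $\beta\in U$. Now take two points $y_1,y_2$, write $\alpha_i:=\hat\alpha(y_i)$, plug $\beta=\alpha_2$ into the inequality for $y_1$ and $\beta=\alpha_1$ into the inequality for $y_2$, and add: this yields
\begin{equation*}
\big(\partial_\alpha\psi(\alpha_1)-\partial_\alpha\psi(\alpha_2)\big)\cdot(\alpha_1-\alpha_2)\leq (y_2-y_1)\cdot(\alpha_1-\alpha_2)\leq |y_1-y_2|\,|\alpha_1-\alpha_2|.
\end{equation*}
On the other hand, summing the $\lambda$-convexity inequality in \ref{ass:psiconvex} with the roles of $\alpha_1,\alpha_2$ exchanged produces the strong monotonicity estimate $\big(\partial_\alpha\psi(\alpha_1)-\partial_\alpha\psi(\alpha_2)\big)\cdot(\alpha_1-\alpha_2)\geq 2\lambda|\alpha_1-\alpha_2|^2$. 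Combining the two displays gives $|\hat\alpha(y_1)-\hat\alpha(y_2)|\leq \tfrac{1}{2\lambda}|y_1-y_2|$, so $\hat\alpha$ is Lipschitz with constant $1/(2\lambda)$, depending on $\lambda$ only. Being Lipschitz, $\hat\alpha$ is in particular continuous, hence Borel measurable, which disposes of the measurability claim (one could alternatively invoke a measurable selection theorem, but it is unnecessary here).

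The argument is essentially elementary; the only points requiring a little care are the correct form of the first-order optimality condition on the \emph{constrained} domain $U$ — which is the variational inequality above rather than $\partial_\alpha\psi+y=0$ — and the bookkeeping of the constant arising from the convention $\psi(\alpha')\geq\psi(\alpha)+\partial_\alpha\psi(\alpha)\cdot(\alpha'-\alpha)+\lambda|\alpha'-\alpha|^2$ adopted in \ref{ass:psiconvex}, which gives the factor $2\lambda$ (and hence Lipschitz constant $1/(2\lambda)$) rather than $\lambda$. Note also that compactness of $U$ is used only for existence of the minimizer; the Lipschitz estimate itself relies solely on convexity of $U$ and $\lambda$-convexity of $\psi$.
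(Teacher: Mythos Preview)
Your proof is correct and follows essentially the same route as the paper's: both derive the variational inequality for the constrained minimizer, add the two instances with $\beta=\hat\alpha(y_2)$ and $\beta=\hat\alpha(y_1)$, and combine with the strong monotonicity $(\partial_\alpha\psi(\alpha_1)-\partial_\alpha\psi(\alpha_2))\cdot(\alpha_1-\alpha_2)\geq 2\lambda|\alpha_1-\alpha_2|^2$ coming from \ref{ass:psiconvex}, arriving at the identical Lipschitz constant $1/(2\lambda)$. The only notable difference is that the paper invokes the gradient descent algorithm (referring to \cite{CarmonaDelarueI}) to obtain measurability of $\hat\alpha$ \emph{before} proving Lipschitz continuity, whereas you obtain measurability \emph{a posteriori} from continuity---your route is slightly more self-contained here.
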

\begin{proof}
Observe that, for any $(t,x,\mu,y)$, the function $\alpha\mapsto H(t,x,\mu,y,\alpha)$ is continuously differentiable and strictly convex, so that $\hat{\alpha}(y)$ appears as the unique solution of the variational inequality:
\begin{equation}
    \forall \beta\in U,\qquad\mbox{~~~it holds~~~}\qquad\left( \hat{\alpha}(y)-\beta)\cdot (y+\nabla\psi(\hat{\alpha})\right)\leq 0.
\end{equation}
Moreover, by strict convexity, measurability of $\hat{\alpha}(y)$ is a consequence of the {\em gradient descent} algorithm with convex constraint, see \cite{CarmonaDelarueI}.
We now prove Lipschitz continuity: for any $y,y' \in\R^d$ and $(t,x,\mu)\in[0,T]\times\R^d \times\Pn$, the criticality of $\hat{\alpha}$ provides the following inequalities
\begin{equation}\label{ineq:1}
\left( \hat{\alpha}(y)-\hat{\alpha}(y')\right)\cdot\partial_\alpha H(t,x,\mu,y',\hat{\alpha}(y'))\geq 0, 
\end{equation}
\begin{equation}\label{ineq:2}
\left( \hat{\alpha}(y')-\hat{\alpha}(y)\right)\cdot\partial_\alpha H(t,x,\mu,y,\hat{\alpha}(y)) \geq 0.
\end{equation}
They in turn imply
\begin{equation}\label{est:alphaH}
\left( \hat{\alpha}(y')-\hat{\alpha}(y)\right)\cdot\left(\partial_\alpha H(t,x,\mu,y',\hat{\alpha}(y'))-\partial_\alpha H(t,x,\mu,y,\hat{\alpha}(y)) \right)\leq 0.
\end{equation}
On the other hand, by using $\lambda$-convexity of $\psi$, we also have
\begin{equation*}
(\alpha'-\alpha)\cdot(\partial_\alpha\psi(\alpha')-\partial_\alpha\psi(\alpha))\geq 2\lambda|\alpha'-\alpha|^2.
\end{equation*}
Since $\partial_\alpha H=y+\partial_\alpha \psi$, we also have
\begin{align*}
2\lambda|\hat{\alpha}(y')-\hat{\alpha}(y)|^2\leq & (\hat{\alpha}(y')-\hat{\alpha}(y))\cdot(\partial_\alpha\psi(\hat{\alpha}(y'))-\partial_\alpha\psi(\hat{\alpha}(y))) \\
\leq &\left( \hat{\alpha}(y')-\hat{\alpha}(y)\right)\cdot\left(\partial_\alpha H(t,x,\mu,y',\hat{\alpha}(y'))-\partial_\alpha H(t,x,\mu,y,\hat{\alpha}(y)) \right)\\
&+\left( \hat{\alpha}(y')-\hat{\alpha}(y)\right)\cdot(y'-y) \leq |\hat{\alpha}(y')-\hat{\alpha}(y)||y'-y|.
\end{align*}
In the last inequality we have used \eqref{est:alphaH}. It then follows
\begin{equation}
|\hat{\alpha}(y')-\hat{\alpha}(y)|\leq \tfrac{1}{2\lambda}|y'-y|.
\end{equation}
This concludes the proof.
\end{proof}

\begin{rem}
From the Lemma above we deduce that the minimum $\hat{\alpha}$ of the Hamiltonian $H$ does not depend on $\e$. On the other hand, the optimal control depends on $\e$, via the adjoint process: it can be written as follows
\begin{equation}\label{def:ottimo-ye}
\hat{\alpha}^\e_t:=\hat{\alpha}(Y_t^\e).
\end{equation}
\end{rem}

Given the optimal control $\hat{\alpha}^\e_t$, we can write a {\em forward backward system of stochastic differential equations} (FB-SDE) of McKean-Vlasov type, that is
\begin{equation}\label{eq:FB-PMP}\tag{FB-SDE}
\begin{cases}
\de X^\e_t=\left( b(t,X^\e_t,\mathcal{L}(X^\e_t))+\hat{\alpha}(Y^\e_t) \right) \de t +\sqrt{2\e}\,\de W_t,\\
\de Y^\e_t=-(\nabla_x H(t,X^\e_t,\mathcal{L}(X^\e_t),Y^\e_t,\hat{\alpha}(Y^\e_t))+\tilde{\E}[\partial_\mu H(t,\tilde{X}^\e_t,\mathcal{L}(\tilde{X}^\e_t),\tilde{Y}^\e_t,\hat{\alpha}(\tilde{Y}^\e_t))(X^\e_t)])\de t\\
\hspace{1.5cm}+Z^\e_t\de W_t,\\
X^\e_0=\xi,\hspace{0.5cm}
Y^\e_T=\nabla_x g(X^\e_T,\mathcal{L}(X^\e_T))+ \tilde{\E}[\partial_\mu g(\tilde{X}^\e_T,\mathcal{L}(\tilde{X}^\e_T))(X_T)],
\end{cases}
\end{equation}
where we recall again that the notation $(\tilde{X}^\e_t,\tilde{Y}^\e_t)$ denotes an independent copy of $(X^\e_t,Y^\e_t)$ defined on the space $(\tilde{\Omega},\tilde{\mathcal{F}}_t,\tilde{\mathbb{P}})$ and $\tilde{\E}$ denotes the expectation on $(\tilde{\Omega},\tilde{\mathcal{F}}_t,\tilde{\mathbb{P}})$. If the coefficients are smooth and no further assumptions are required, systems of FB-SDE are not always solvable, see \cite{Antonelli93}. For a fixed $\e>0$, existence of a solution of \eqref{eq:FB-PMP} is provided in \cite{Carmona2013b}. In the next subsection, we show that convexity of the Hamiltonian ensures well-posedness of \eqref{eq:FB-PMP}, even when the viscosity coefficient is zero. Thus, stability estimates on the solutions of \eqref{eq:FB-PMP} will turn into an $\e$-uniform bound on the Lipschitz constant of the optimal control of $\Ppe$, as we will show in Section 4.

\subsection{Well-posedness of (FB-SDE)}
The goal of this subsection is to prove well-posedness of the system \eqref{eq:FB-PMP} associated to an optimal control, and then to build the associated decoupling field. Note that Theorem \ref{teo:ott-st-suff} ensures that solving (SOC) is equivalent to solve the system \eqref{eq:FB-PMP}. We drop the $\e$ superscript in the whole subsection, for simplicity of notations.

We adopt the strategy known as {\em continuation method} for FB-SDEs, see  \cite{PengWu99}. We denote by $\Theta_t:=(X_t,\Law(X_t),Y_t,\hat{\alpha}_t)$, where $\hat{\alpha}_t=\hat{\alpha}(Y_t)$, and $\mathscr{S}$ is the space of the processes $\Theta_t$ such that
\begin{equation}
\|\Theta\|_\mathscr{S}:=\E\left[ \sup_{t\in(0,T)}\left(|X_t|^2+|Y_t|^2\right)+\int_0^T\left(|Z_t|^2+|\hat{\alpha}_t|^2\right)\de t \right]^{1/2} <+\infty,
\end{equation}
where $Z_t$ is the process associated to $Y_t$ as in \eqref{eq:FB-PMP}. Similarly, we define $\theta_t:=(X_t,\Law(X_t))$. Moreover, an {\em input} $\mathcal{I}=(I_t^b,I_t^\sigma,I_t^f,I_T^g)$ will be a four-tuple where the first three entries are square-integrable progressively measurable processes and the last one is an $\mathcal{F}_T$ square-integrable random variable. We denote by $\mathbb{I}$ the space of inputs endowed with the norm
\begin{equation}
\|I\|_\mathbb{I}:=\E\left[ |I_T^g|^2+\int_0^T\left(|I_t^b|^2+|I_t^\sigma|^2+|I_t^f|^2\right)\de t \right]^{1/2}<+\infty.
\end{equation}
\begin{defn}
For each $\gamma\in [0,1], \xi\in L^2(\Omega,\mathcal{F}_0,\mathbb{P};\R^d)$ and $I\in \mathbb{I}$, define $\mathcal{E}(\gamma,\xi,I)$ as the FB-SDE:
\begin{equation}\label{eq:FBSDE-gamma}
\begin{cases}
\de X_t=\left(\gamma [b(t,\theta_t)+\hat{\alpha}_t]+I_t^b\right)\de t+\left(\gamma\sqrt{2\e}+I_t^\sigma\right)\de W_t,\\
\de Y_t=-\left( \gamma\left\{ \nabla_x H(t,\Theta_t)+\tilde{\E}\left[ \partial_\mu H(t,\tilde{\Theta}_t)(X_t) \right]\right\}+I_t^f\right)\de t+Z_t\de W_t,\\
X_0=\xi,\\
Y_T=\gamma\left\{\nabla g(X_T,\Law(X_T))+\tilde{\E}\left[ \partial_\mu g(\tilde{X}_T,\Law(\tilde{X}_T))(X_T)\right]\right\}+I_T^g.
\end{cases}
\end{equation}
For any $\gamma\in[0,1]$, we say that the property $(S_\gamma)$ holds if, for any $\xi\in L^2(\Omega,\mathcal{F}_0,\mathbb{P};\R^d)$ and $I\in \mathbb{I}$, the FB-SDE $\mathcal{E}(\gamma,\xi,I)$ has a unique solution in $\mathscr{S}$.
\end{defn}

We now provide a stability lemma for solutions of \eqref{eq:FBSDE-gamma}.  It is very similar to  the one of \cite[Lemma 5.5]{Carmona2015} and generalizes it to non affine vector-fields.
\begin{lem}\label{lem:stabilityTheta}
Let $\gamma\in [0,1]$ such that $(S_\gamma)$ holds. Then, there exists a constant $C$, which depends on $T,L,\lambda$ and it is independent on $\gamma$ and $\e$, such that for any $\xi,\xi'\in L^2(\Omega,\mathcal{F}_0,\mathbb{P};\R^d)$ and $I,I'\in \mathbb{I}$, the solutions $\Theta,\Theta'$ of $\mathcal{E}(\gamma,\xi,I), \mathcal{E}(\gamma,\xi',I')$ satisfy:
\begin{equation}
\|\Theta-\Theta'\|_{\mathscr{S}}^2\leq C\left(\E\left[|\xi-\xi'|^2\right]+\|I-I'\|_\mathbb{I}^2\right).
\end{equation}
\end{lem}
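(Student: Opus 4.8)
The plan is to prove the stability estimate by a standard energy/duality computation for forward-backward systems: apply It\^o's formula to the inner product $\langle Y_t - Y_t', X_t - X_t'\rangle$ (equivalently to $\de(\langle \Delta Y, \Delta X\rangle)$ with $\Delta X := X - X'$, $\Delta Y := Y - Y'$), integrate over $[0,T]$, and take expectations. The stochastic integral terms vanish in expectation thanks to the integrability built into $\mathscr{S}$ and $\mathbb{I}$, so one is left with: the terminal term $\E[\langle \Delta Y_T, \Delta X_T\rangle]$, which by the terminal condition in \eqref{eq:FBSDE-gamma} equals $\gamma\,\E[\langle (\text{difference of } \nabla g + \tilde\E[\partial_\mu g]), \Delta X_T\rangle] + \E[\langle \Delta I^g_T, \Delta X_T\rangle]$; the drift contributions from the forward equation paired with $\Delta Y_t$; and the drift contributions from the backward equation paired with $\Delta X_t$. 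The crucial cancellation is that the $\gamma$-weighted terms combine into the ``convexity'' expression
$$
\gamma\,\E\!\left[\langle \nabla_x H(\Theta_t) - \nabla_x H(\Theta_t'), \Delta X_t\rangle + \tilde\E[\langle \partial_\mu H(\tilde\Theta_t)(X_t)-\ldots, \Delta X_t\rangle] + \langle b(\theta_t)-b(\theta_t') + \hat\alpha_t - \hat\alpha_t', \Delta Y_t\rangle\right],
$$
and using the explicit form $H = (b+\alpha)\cdot y + f + \psi$ together with the Fubini manipulation already used in the proof of Theorem \ref{teo:ott-st-suff} (to symmetrize the $\partial_\mu$ terms), this expression reduces to
$$
\gamma\,\E\!\left[\langle \nabla_x f(\theta_t)-\nabla_x f(\theta_t'),\Delta X_t\rangle + \tilde\E[\langle \partial_\mu f(\theta_t)(\tilde X_t)-\ldots,\Delta\tilde X_t\rangle] + \langle \hat\alpha(Y_t)-\hat\alpha(Y_t'), \Delta Y_t\rangle\right].
$$

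The first two pieces are bounded below using L-convexity of $f$ (Assumption \ref{ass:fconvex}), and the terminal piece is bounded below using L-convexity of $g$ (Assumption \ref{ass:gconvex}); these give nonnegative lower bounds up to the input terms. The key new ingredient is the control term: since $\hat\alpha = \hat\alpha(y)$ is the minimizer of $H$, the criticality inequalities \eqref{ineq:1}--\eqref{ineq:2} from Lemma \ref{lem:lipcontr} give $\langle \hat\alpha(Y_t)-\hat\alpha(Y_t'), (Y_t - Y_t') + (\partial_\alpha\psi(\hat\alpha(Y_t)) - \partial_\alpha\psi(\hat\alpha(Y_t')))\rangle \le 0$, and $\lambda$-convexity of $\psi$ gives $\langle \hat\alpha(Y_t)-\hat\alpha(Y_t'), \partial_\alpha\psi(\hat\alpha(Y_t))-\partial_\alpha\psi(\hat\alpha(Y_t'))\rangle \ge 2\lambda|\hat\alpha(Y_t)-\hat\alpha(Y_t')|^2$, so $\langle \hat\alpha(Y_t)-\hat\alpha(Y_t'), \Delta Y_t\rangle \le -2\lambda|\hat\alpha(Y_t)-\hat\alpha(Y_t')|^2$. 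Thus the full energy identity yields
$$
2\lambda\,\gamma\,\E\!\left[\int_0^T |\hat\alpha_t - \hat\alpha_t'|^2\,\de t\right] \le \E[\langle \Delta I^g_T, \Delta X_T\rangle] + (\text{input cross-terms from } I^b, I^\sigma, I^f \text{ paired with } \Delta X, \Delta Y),
$$
and every cross-term is estimated by Young's inequality, splitting into a small multiple of $\|\Theta - \Theta'\|_{\mathscr S}^2$ plus a constant multiple of $\E[|\xi-\xi'|^2] + \|I-I'\|_{\mathbb I}^2$.

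This alone controls $\E\int_0^T|\hat\alpha - \hat\alpha'|^2\de t$ only when $\gamma > 0$, so the final step is to upgrade to the full $\mathscr S$-norm. For the forward component, $\Delta X$ solves an SDE with drift $\gamma(b(\theta_t)-b(\theta_t') + \hat\alpha_t - \hat\alpha_t') + \Delta I^b_t$ and diffusion $\Delta I^\sigma_t$; by \ref{ass:vf3}, Gr\"onwall, and Doob/BDG one gets $\|\Delta X\|^2 \lesssim \E[|\xi-\xi'|^2] + \E\int_0^T|\hat\alpha_t-\hat\alpha_t'|^2\de t + \|I-I'\|_{\mathbb I}^2$, with constants independent of $\gamma,\e$ (here one uses $\gamma \le 1$ and the uniform a priori bound \eqref{est:uniform_bound_y} on $Y$, as in Lemma \ref{lem:constant-fsde}). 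For the backward component $(\Delta Y, \Delta Z)$, the same argument as in Step 2 of Lemma \ref{lem:constant-bsde} — It\^o on $|\Delta Y_t|^2$, \ref{ass:vf4}, \ref{ass:regularitycost}, the uniform bound on $Y$, Young, and Gr\"onwall — gives $\|\Delta Y\|^2 + \E\int_0^T |\Delta Z|^2\de t \lesssim \E[\sup_t|\Delta X_t|^2] + \|I-I'\|_{\mathbb I}^2$. Combining, and absorbing the $\E\int_0^T|\hat\alpha-\hat\alpha'|^2\de t$ term either via the bound above (when $\gamma$ is bounded below) or noting it is itself controlled by $\|\Delta Y\|^2$ via Lemma \ref{lem:lipcontr} (the Lipschitz bound $|\hat\alpha(Y_t)-\hat\alpha(Y_t')| \le \tfrac1{2\lambda}|\Delta Y_t|$, which works uniformly in $\gamma$), closes the estimate.

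The main obstacle is keeping all constants genuinely independent of $\gamma$ and of $\e$: the $\e$-independence requires that the diffusion coefficient $\gamma\sqrt{2\e}$ only enters through the quadratic variation, which is harmless since $\e \le 1$ and It\^o isometry handles it, while the $\gamma$-independence in the final absorption step is where one must be careful — the naive bound $2\lambda\gamma \E\int|\hat\alpha-\hat\alpha'|^2$ degenerates as $\gamma \to 0$, so the correct route is to use the $\gamma$-free Lipschitz estimate $|\hat\alpha(Y_t)-\hat\alpha(Y_t')|\le \tfrac{1}{2\lambda}|\Delta Y_t|$ from Lemma \ref{lem:lipcontr} to bound the control contribution to $\|\Theta-\Theta'\|_{\mathscr S}^2$ directly by $\|\Delta Y\|^2$, rather than relying on the energy identity for that piece. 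A secondary technical point is the Fubini/independent-copy bookkeeping in the $\partial_\mu H$ terms, but this is identical to the manipulation \eqref{eq:fubini1}--\eqref{eq:fubini2} already carried out.
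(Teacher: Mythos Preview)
Your energy-identity route has a gap at the step where you claim the $\gamma$-weighted terms ``reduce to'' just the $\nabla_x f$, $\partial_\mu f$, and $\hat\alpha$ contributions. That reduction is correct only when $b$ is affine in $(x,\mu)$ (cf.\ the remark following Theorem~\ref{teo:ott-st-suff}); for general $b$ under \ref{ass:vf3}--\ref{ass:vf4}, the pairing of $\langle\nabla_x b(\theta_t) Y_t - \nabla_x b(\theta_t') Y_t',\,\Delta X_t\rangle$ (and its $\partial_\mu$ analogue) against $\langle b(\theta_t)-b(\theta_t'),\Delta Y_t\rangle$ does \emph{not} cancel. What remains is a second-order residual of the type appearing in Lemma~\ref{lem:stimab}, of size roughly $L\,C_3(T,L)\,\E\int_0^T|\Delta X_t|^2\,\de t$ after using the uniform bound on $Y$. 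So the convexity/monotonicity bookkeeping does not close as you wrote it; one would still have to absorb this residual, which brings you back to the same circle of forward/backward inequalities.

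The fallback you describe in your last paragraph --- bound $|\hat\alpha(Y_t)-\hat\alpha(Y_t')|\le\tfrac1{2\lambda}|\Delta Y_t|$ via Lemma~\ref{lem:lipcontr}, then combine the separate forward and backward stability estimates and close with Gronwall --- is precisely the paper's proof, and in fact that is \emph{all} the paper does. There is no It\^o formula on $\langle\Delta Y,\Delta X\rangle$, no use of the L-convexity of $f$ or $g$, and no monotonicity computation: the paper simply records the forward bound \eqref{est:FSDE} and the backward bound \eqref{est:BSDE} (inherited from the arguments of Lemmas~\ref{lem:constant-fsde} and~\ref{lem:constant-bsde}), substitutes the first into the second, replaces $\E\int_0^T|\hat\alpha(Y_t)-\hat\alpha(Y_t')|^2\,\de t$ by $\tfrac{1}{4\lambda^2}\int_0^T\E[|\Delta Y_t|^2]\,\de t$, and applies Gronwall to obtain \eqref{est:bsde_lemmastab}, from which the bounds on $\Delta X$ and $\Delta Z$ follow by back-substitution. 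The elaborate first half of your proposal is therefore unnecessary; the short route you identify only at the end is the complete argument.
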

\begin{proof}
The proof strongly relies on the estimates (and the strategies) proved in Lemma \ref{lem:constant-fsde} and Lemma \ref{lem:constant-bsde}. For the forward component $X_t$ we have 
\begin{equation}\label{est:FSDE}
\E\left[ \sup_{t\in(0,T)}|X_t-X_t'|^2 \right]\leq \E\left[ |\xi-\xi'|^2 \right]+C\gamma\E\left[ \int_0^T|\hat{\alpha}(Y_t)-\hat{\alpha}(Y_t')|^2\de t \right]+C\|I-I'\|^2_\mathbb{I},
\end{equation}
while for the backward component $Y_t,Z_t$ it holds
\begin{align}
\E&\left[\sup_{t\in(0,T)}|Y_t-Y_t'|^2 +\int_0^T|Z_t-Z_t '|^2\de t\right]\nonumber\\
&\leq C\gamma\E\left[ \sup_{t\in(0,T)}|X_t-X_t'|^2+\int_0^T|\hat{\alpha}(Y_t)-\hat{\alpha}(Y_t')|^2\de t \right] +C\|I-I'\|^2_\mathbb{I}.\label{est:BSDE}
\end{align}
In order to close the estimates, we follow the computations of Theorem \ref{teo:ott-st-suff} to obtain a bound on the terms involving the optimal control. First of all, by using the estimate in \eqref{est:finalcost} we have that
\begin{align*}
    \E\left[ (X_T'-X_T)\cdot Y_T \right]&\leq \gamma\E\left[g(X'_T,\Law(X_T')-g(X_T,\Law(X_T))\right]+2\gamma L \E\left[|X_T-X_T'|^2\right]\\
    &+\E\left[ (X_T'-X_T)\cdot I^{g}_T \right]
\end{align*}

On the other hand, by using the equations we have that
\begin{align}
\E&\left[ (X_T'-X_T)\cdot Y_T \right]= \E\left[(\xi'-\xi)\cdot Y_0 \right]-\gamma\E\left[\int_0^T \nabla_x H(t,X_t,\Law(X_t),Y_t,\hat{\alpha}_t)\cdot(X'_t-X_t) \de t\right]\nonumber\\
&-\gamma\E\left[ \int_0^T\tilde{\E}\left[ \partial_\mu H(t,X_t,\Law(X_t),Y_t,\hat{\alpha}_t)(\tilde{X}_t) \right]\cdot(\tilde{X}_t'-\tilde{X}_t) \de t\right]\nonumber\\
&+\gamma\E\left[ \int_0^T \left(b(t,X_t',\Law(X_t'))-b(t,X_t,\Law(X_t))+\hat{\alpha}_t'-\alpha_t\right))\cdot Y_t \de t\right]\nonumber\\
&-\E\int_0^T\left[(X_t'-X_t)\cdot I^f_t+(I^{b}_t-I^{b,'}_t)\cdot Y_t+ (I^\sigma_t-I^{\sigma,'}_t)\cdot Z_t\right]\de t= T_0-\gamma T_1-T_2.\nonumber
\end{align}
where we used Fubini's theorem to switch the integrals in the second line. Then, by repeating the proof of Theorem \ref{teo:ott-st-suff} we obtain that
\begin{equation}\label{eq:reverse}
    \gamma J^S(\hat{\alpha}')-\gamma J^S(\hat{\alpha})\geq \gamma (\lambda-\Lambda(T,L))\E\left[\int_0^T|\hat{\alpha}_t-\hat{\alpha}_t'|^2\right]\de t+T_0-T_2+\E\left[ (X_T-X_T')\cdot I^{g}_T \right],
\end{equation}
where $\Lambda(T,L)$ is the constant defined in \eqref{def:Lambda}.
We now reverse the role of $\hat{\alpha}_t$ and $\hat{\alpha}_t'$ in \eqref{eq:reverse}, and by denoting with $T_0'$ and $T_2'$ the corresponding terms in the inequality, we obtain that
\begin{equation}
    2\gamma (\lambda-\Lambda(T,L))\E\left[|\hat{\alpha}_t-\hat{\alpha}_t'|^2\right]\de t+T_0+T_0'-T_2-T_2'+\E\left[ (I^{g}_T-I^{g,'}_T)\cdot (X_T-X_T') \right]\leq 0.
\end{equation}
We remark that the condition above follows from the convexity assumptions and not from the optimality of $\hat{\alpha}_t'$ or $\hat{\alpha}_t$, indeed the corresponding trajectories start from a different initial condition thus they are respectively not admissible. Then, since
$$
T_0+T_0'=\E\left[(\xi'-\xi)\cdot (Y_0-Y_0') \right],
$$
and similarly
\begin{align*}
    T_2+T_2'&=\E\int_0^T\left[-(X_t-X_t')\cdot (I^f_t-I^{f,'}_t)+(I^{b}_t-I^{b,'}_t)\cdot (Y_t-Y_t')\right]\de t\\
    &+ \E\int_0^T\left[(I^\sigma_t-I^{\sigma,'}_t)\cdot (Z_t-Z_t')\right]\de t,
\end{align*}
by using Young inequality we have that
\begin{equation}\label{est:controlli}
    \gamma (\lambda-\Lambda(T,L))\E\left[|\hat{\alpha}_t-\hat{\alpha}_t'|^2\right]\de t\leq \eta\|\Theta-\Theta'\|_{\mathscr{S}}^2+\frac{C}{\eta} \left(\E\left[|\xi-\xi'|^2\right]+\|I-I'\|_\mathbb{I}^2\right).
\end{equation}
Finally, by using that $\lambda>\Lambda(T,L)$ and by plugging \eqref{est:controlli} in \eqref{est:FSDE} and \eqref{est:BSDE} we obtain that
\begin{align}
    \E\left[\sup_{t\in(0,T)}|X_t-X_t'|^2\right.&+\sup_{t\in(0,T)}|Y_t-Y_t'|^2 \left.+\int_0^T|Z_t-Z_t '|^2\de t\right]\nonumber\\
    &\leq C\eta\|\Theta-\Theta'\|_{\mathscr{S}}^2+\frac{C}{\eta} \left(\E\left[|\xi-\xi'|^2\right]+\|I-I'\|_\mathbb{I}^2\right),
\end{align}
where it is worth to note that the constant $C$ blows up as $\lambda\to\Lambda(T,L)$. Then, the result follows by using the Lipschitz continuity of $\hat{\alpha}$ and choosing $\eta$ small enough.
\end{proof}

\begin{rem}
Note that the proof of Lemma \ref{lem:stabilityTheta} is similar to the proof of Theorem \ref{teo:ott-st-suff}. Thus, the Remark \ref{rem:convexityfg} also holds for it.
\end{rem}

We now give an {\em induction} lemma for the system \eqref{eq:FBSDE-gamma}.
\begin{lem}\label{lem:induction}
There exists a $\delta_0>0$, which depends on $T,L,\lambda$ only, such that, if $(S_\gamma)$ holds for some $\gamma\in[0,1)$, then $(S_{\gamma+\eta})$ holds for all $\eta\in(0,\delta_0]$ satisfying $\gamma+\eta\leq 1$.
\end{lem}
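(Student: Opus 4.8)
The plan is to run the classical \emph{continuation method} for FB-SDEs: I will use the solvability property $(S_\gamma)$, together with the stability estimate of Lemma \ref{lem:stabilityTheta} and the Lipschitz regularity of the optimal control from Lemma \ref{lem:lipcontr}, to set up a Banach fixed point argument proving $(S_{\gamma+\eta})$, with the threshold $\eta$ quantified only through the constants appearing in those two lemmas, hence only through $T,L,\lambda$.

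Fix $\gamma\in[0,1)$ for which $(S_\gamma)$ holds, and fix $\xi\in L^2(\Omega,\mathcal{F}_0,\mathbb{P};\R^d)$ and $I\in\mathbb{I}$. To any process $\Theta'=(X',\Law(X'),Y',\hat{\alpha}')\in\mathscr{S}$ I associate the input
\[
\mathcal{K}(\Theta'):=\Bigl(b(t,\theta'_t)+\hat{\alpha}'_t,\ \sqrt{2\e},\ \nabla_x H(t,\Theta'_t)+\tilde{\E}[\partial_\mu H(t,\tilde{\Theta}'_t)(X'_t)],\ \nabla g(X'_T,\Law(X'_T))+\tilde{\E}[\partial_\mu g(\tilde{X}'_T,\Law(\tilde{X}'_T))(X'_T)]\Bigr),
\]
obtained by freezing the nonlinear coefficients of \eqref{eq:FBSDE-gamma} at $\Theta'$. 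Because of the linear growth \ref{ass:vf2} of $b$, the fact that $\hat{\alpha}'$ is $U$-valued, the boundedness of the derivatives of $f,g$ in \eqref{ass:derivate_fg}, and the linearity of $\nabla_x H,\partial_\mu H$ in the $y$-variable (which is square integrable since $\Theta'\in\mathscr{S}$), one checks $\mathcal{K}(\Theta')\in\mathbb{I}$, so $I+\eta\,\mathcal{K}(\Theta')\in\mathbb{I}$. By $(S_\gamma)$ the system $\mathcal{E}\bigl(\gamma,\xi,I+\eta\,\mathcal{K}(\Theta')\bigr)$ has a unique solution in $\mathscr{S}$, which I denote $\Phi(\Theta')$. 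A termwise comparison of \eqref{eq:FBSDE-gamma} at parameter $\gamma+\eta$ with $\mathcal{E}\bigl(\gamma,\xi,I+\eta\,\mathcal{K}(\Theta)\bigr)$ shows that $\Theta\in\mathscr{S}$ solves $\mathcal{E}(\gamma+\eta,\xi,I)$ if and only if $\Phi(\Theta)=\Theta$; hence $(S_{\gamma+\eta})$ amounts to $\Phi$ having, for every $\xi$ and $I$, a unique fixed point in $\mathscr{S}$.

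To produce the contraction, take $\Theta_1',\Theta_2'\in\mathscr{S}$ and apply Lemma \ref{lem:stabilityTheta} to $\mathcal{E}\bigl(\gamma,\xi,I+\eta\,\mathcal{K}(\Theta_1')\bigr)$ and $\mathcal{E}\bigl(\gamma,\xi,I+\eta\,\mathcal{K}(\Theta_2')\bigr)$ (same $\xi$, same $\gamma$): this yields $\|\Phi(\Theta_1')-\Phi(\Theta_2')\|_{\mathscr{S}}^2\le C\,\eta^2\,\|\mathcal{K}(\Theta_1')-\mathcal{K}(\Theta_2')\|_{\mathbb{I}}^2$ with $C=C(T,L,\lambda)$. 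It then suffices to establish a Lipschitz bound $\|\mathcal{K}(\Theta_1')-\mathcal{K}(\Theta_2')\|_{\mathbb{I}}\le\kappa\,\|\Theta_1'-\Theta_2'\|_{\mathscr{S}}$ with $\kappa=\kappa(T,L,\lambda)$: setting $\delta_0:=(2\sqrt{C}\,\kappa)^{-1}$ gives $C\kappa^2\eta^2\le 1/4$ for every $\eta\in(0,\delta_0]$, so $\Phi$ is a $\tfrac12$-contraction and the Banach fixed point theorem yields $(S_{\gamma+\eta})$. The Lipschitz bound for the first and fourth components of $\mathcal{K}$ is routine, using the $L$-Lipschitz continuity of $b$, $\nabla g$, $\partial_\mu g$ from \ref{ass:vf3}, \ref{ass:regularitycost}, the $\tfrac{1}{2\lambda}$-Lipschitz continuity of $y\mapsto\hat{\alpha}(y)$ from Lemma \ref{lem:lipcontr}, and the coupling inequality $W_2(\Law(X_1'),\Law(X_2'))^2\le\E[|X_1'-X_2'|^2]$ from Proposition \ref{prop:w2-stoc}.

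The main obstacle is the Lipschitz estimate for the third (adjoint-drift) component $\nabla_x H(t,\Theta'_t)+\tilde{\E}[\partial_\mu H(t,\tilde{\Theta}'_t)(X'_t)]$: by the form \eqref{def:hamiltoniana_stocastico} of $H$ it contains the \emph{bilinear} terms $\nabla_x b(t,x',\mu')\,y'$ and $\tilde{\E}[\partial_\mu b(t,\tilde{x}',\tilde{\mu}')(x')\,\tilde{y}']$, so that, after adding and subtracting, one faces a contribution of the type $|\nabla_x b(t,X_{1,t}',\Law(X_{1,t}'))-\nabla_x b(t,X_{2,t}',\Law(X_{2,t}'))|\,|Y_{2,t}'|$. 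Bounding it by $\|\Theta_1'-\Theta_2'\|_{\mathscr{S}}$ times a constant depending on $T,L,\lambda$ only — and not on $\|\Theta_2'\|_{\mathscr{S}}$ — is exactly where one invokes the uniform-in-$\omega$ a priori bound on the adjoint component of the kind obtained in Lemma \ref{lem:constant-bsde}, $\sup_{t}\sup_{\omega}|Y_t|\le C_3(T,L)$, together with \ref{ass:vf3}, \ref{ass:vf4} and \eqref{ass:derivate_fg} for the remaining, non-bilinear terms. Once the bilinear terms are controlled in this way, all the estimates are Gronwall/Osgood-type and produce constants built only from $T$, $L$ and $\lambda$, as the statement requires.
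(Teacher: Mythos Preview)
Your overall strategy is exactly the paper's: define the Picard map $\Phi$ by freezing the nonlinear coefficients, use the solvability $(S_\gamma)$ and the stability Lemma~\ref{lem:stabilityTheta} to get a contraction for $\eta$ small. The paper's proof is in fact terser than yours and simply asserts the contraction.

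There is, however, a genuine gap in the step you yourself singled out. You propose to control the bilinear contribution $|\nabla_x b(t,X_{1,t}',\Law(X_{1,t}'))-\nabla_x b(t,X_{2,t}',\Law(X_{2,t}'))|\,|Y_{2,t}'|$ by invoking the uniform-in-$\omega$ bound $\sup_t\sup_\omega|Y_t|\le C_3(T,L)$ from Lemma~\ref{lem:constant-bsde}. But that bound is proved for the adjoint process $Y^\e$ solving the specific BSDE \eqref{eq:adj_st-specifica}, whose drift and terminal datum are built from the bounded derivatives of $b,f,g$. In the fixed-point iteration, $\Theta_2'=(X_2',\Law(X_2'),Y_2',\hat\alpha_2')$ is an \emph{arbitrary} element of $\mathscr{S}$, for which no pointwise bound on $Y_2'$ is available; the $\mathscr{S}$-norm only controls $\E[\sup_t|Y_{2,t}'|^2]$. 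Without such a bound the Lipschitz constant $\kappa$ of $\mathcal{K}$ depends on $\|\Theta_2'\|_\mathscr{S}$, so $\Phi$ is at best a local contraction and $\delta_0$ would depend on the point, not only on $T,L,\lambda$. To close the argument one must either restrict the iteration to a suitable invariant subset of $\mathscr{S}$ on which the backward component stays bounded (and prove invariance), or exploit the monotonicity/convexity structure of the Pontryagin system more directly as in the Peng--Wu continuation. In fairness, the paper's own proof does not address this point either; your diagnosis of the obstacle is correct, it is the resolution that does not go through as written.
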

\begin{proof}
The proof follows a standard Picard's contraction argument. Indeed, if $\gamma$ is such that $(\mathcal{S}_\gamma)$ holds, for $\eta>0,\xi\in L^2(\Omega,\mathscr{F}_0,\mathbb{P};\R^d)$ and $I\in\mathbb{I}$, we define the map $\Phi:\mathscr{S}\to\mathscr{S}$ whose fixed points coincide with the solution of $\mathcal{E}(\gamma+\eta,\xi, I)$. We now give the definition of $\Phi$. Given a process $\Theta\in \mathscr{S}$, we denote with $\Theta'$ the solution of $\mathcal{E}(\gamma,\xi,I')$ with
\begin{align*}
I_t^{b,'}&=\eta b(t,\theta_t)+\eta\hat{\alpha}(Y_t)+I_t^{b}\\
I_t^{f,'}&=\eta\nabla H(t,\Theta_t)+\eta\tilde{\E}\left[ \partial_\mu H(t,\tilde{\Theta}_t)(X_t) \right]+I^f_t\\
I_t^{\sigma,'}&=\eta \sqrt{2\e}+I^\sigma_t\\
I_T^{g,'}&=\eta\nabla g(X_T,\Law(X_T))+\eta\tilde{\E}\left[ \partial_\mu g(\tilde{X}_T,\Law(\tilde{X}_T))(X_T)\right]+I^g_T.
\end{align*}
By assumptions, it is uniquely defined and it belongs to $\mathscr{S}$, so the mapping $\Phi:\Theta\to\Theta '$ maps $\mathscr{S}$ into itself. It is clear that a process $\Theta$ is a fixed point for $\Phi$ if and only if $\Theta$ is a solution of $\mathcal{E}(\gamma,\xi,I')$. We now only need to prove that $\Phi$ is a contraction when $\eta$ is small enough. Given $\Theta^1,\Theta^2\in\mathscr{S}$, by Lemma \ref{lem:stabilityTheta} we get that
\begin{equation}
\| \Phi(\Theta^1)-\Phi(\Theta^2) \|_{\mathscr{S}}\leq C\eta\| \Theta^1-\Theta^2 \|_{\mathscr{S}},
\end{equation}
which is enough to conclude the proof, since the constant $C$ does not depend on $\gamma$.
\end{proof}

We are now able to prove well-posedness of \eqref{eq:FB-PMP}.

\begin{thm}\label{thm:well-posedness fb-sde}
Assume that {\bf Assumptions (A), (B), (C), (J)} hold. Then, for any initial $\xi\in L^2(\Omega,\mathcal{F}_0,\mathbb{P};\R^d)$, the system \eqref{eq:FB-PMP} is uniquely solvable.
\end{thm}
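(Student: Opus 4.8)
The plan is to prove Theorem \ref{thm:well-posedness fb-sde} via the continuation method, using Lemma \ref{lem:induction} as the inductive step and a base case for $\gamma=0$. First I would observe that the property $(S_0)$ holds trivially: when $\gamma=0$, the system $\mathcal{E}(0,\xi,I)$ decouples completely into a forward SDE $\de X_t = I_t^b\,\de t + I_t^\sigma\,\de W_t$ with $X_0=\xi$ and a backward SDE $\de Y_t = -I_t^f\,\de t + Z_t\,\de W_t$ with terminal condition $Y_T = I_T^g$. The forward equation is solved by direct integration, and the backward equation is a standard linear BSDE with square-integrable data, hence uniquely solvable in $\mathscr{S}$ by classical results (e.g.\ \cite{CarmonaDelarueI}); one checks the $\mathscr{S}$-norm is finite using $I\in\mathbb{I}$ and $\xi\in L^2$, together with the fact that $\hat\alpha_t=\hat\alpha(Y_t)$ is bounded since it takes values in the compact set $U$.

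Next I would run the induction: by Lemma \ref{lem:induction} there is a fixed step size $\delta_0>0$, depending only on $T,L,\lambda$, such that $(S_\gamma)$ implies $(S_{\gamma+\eta})$ for every $\eta\in(0,\delta_0]$ with $\gamma+\eta\le 1$. Starting from $(S_0)$ and iterating, after finitely many steps (at most $\lceil 1/\delta_0\rceil$) we reach $(S_1)$. Since $\mathcal{E}(1,\xi,0)$ is precisely the system \eqref{eq:FB-PMP} — note that \eqref{eq:FB-PMP} is $\mathcal{E}(\gamma,\xi,I)$ with $\gamma=1$ and zero input $I=0$, and the terms $\nabla_x H$, $\partial_\mu H$ reduce to the expressions in \eqref{eq:adj_st-specifica} by the structure of the Hamiltonian — property $(S_1)$ applied with $I=0$ yields existence and uniqueness of a solution of \eqref{eq:FB-PMP} in $\mathscr{S}$ for any initial datum $\xi$.

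The only genuine content beyond bookkeeping is already packaged in the two preceding lemmas, so I would keep the proof short and point to them. The main obstacle in the overall argument — the place where the hypothesis $\lambda>\Lambda(T,L)$ and the convexity assumptions \textbf{(C)} are really used — is the stability estimate of Lemma \ref{lem:stabilityTheta}, which feeds the contraction constant $C\eta$ in Lemma \ref{lem:induction}; that $C$ must be independent of $\gamma$ and $\e$, which is what makes the step size $\delta_0$ uniform and lets the induction close in finitely many steps. For the present theorem itself, however, there is no remaining difficulty: it is a clean concatenation of the base case $(S_0)$, the induction Lemma \ref{lem:induction}, and the identification of \eqref{eq:FB-PMP} with $\mathcal{E}(1,\xi,0)$. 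I would also remark that, by Theorem \ref{teo:ott-st-suff}, this well-posedness is equivalent to solvability of (SOC), so the unique solution of \eqref{eq:FB-PMP} encodes the unique optimal control $\hat\alpha^\e_t=\hat\alpha(Y^\e_t)$ of the stochastic problem.
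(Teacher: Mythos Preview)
Your proposal is correct and follows exactly the paper's approach: verify $(S_0)$ for the decoupled system, then iterate Lemma \ref{lem:induction} with the uniform step size $\delta_0$ to reach $(S_1)=\mathcal{E}(1,\xi,0)$, which is \eqref{eq:FB-PMP}. One minor inaccuracy in your side remark: the specific bound $\lambda>\Lambda(T,L)$ is not what drives Lemma \ref{lem:stabilityTheta} (only the Lipschitz property of $\hat\alpha$, hence $\lambda>0$, is used there); the condition $\lambda>\Lambda(T,L)$ enters instead in Theorem \ref{teo:ott-st-suff}.
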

\begin{proof}
First, note that for $\gamma=0$, the right hand side of the system \eqref{eq:FBSDE-gamma} is made up of square-integrable progressively measurable processes, and it does not depends on the solution itself. So $(S_0)$ obviously holds. Then, the proof is a straightforward  induction argument based on Lemma \ref{lem:induction}.
\end{proof}

As already stressed above, by Theorem \ref{thm:well-posedness fb-sde} we know that the solution of \eqref{eq:FB-PMP} is the unique optimal path of the stochastic control problem (SOC). 
To conclude this section, we show the existence of a decoupling field related to the system \eqref{eq:FB-PMP}, which will allow to write the optimal control $\hat{\alpha}_t^\e$ in feedback form.

\begin{lem}\label{lem:esistenza_master-field}
For any $t\in[0,T]$ and $\xi\in L^2(\Omega,\mathcal{F}_t,\mathbb{P};\R^d)$, there exists a unique solution $$(X^{\xi,\e}_{t,s},Y^{\xi,\e}_{t,s},Z^{\xi,\e}_{t,s})_{t\leq s\leq T}$$ of the system \eqref{eq:FB-PMP} on $[t,T]$ with $X^{\xi,\e}_{t,t}=\xi$. Moreover, for any $\mu\in \Pn$, there exists a measurable mapping $\U^\e(t,\cdot,\mu):x\in\R^d\mapsto\U^\e(t,x,\mu)$ such that:
\begin{equation}
\mathbb{P}\left[Y_{t,t}^{\xi,\e}=\U^\e(t,\xi,\mathcal{L}(\xi))\right]=1.
\end{equation}
Moreover, there exists a constant $C$, depending only on the parameters in {\bf Assumptions (A), (B), (C), (J)}, such that, for any $t\in [0,T]$ and any $\xi_1,\xi_2\in L^2(\Omega,\mathcal{F}_0,\mathbb{P};\R^d)$,
\begin{equation}\label{def:lips_constant-master-field}
\E\left[ |\U(t,\xi_1,\mathcal{L}(\xi_1))-\U(t,\xi_2,\mathcal{L}(\xi_2))|^2 \right]\leq C\E\left[ |\xi_1-\xi_2|^2\right].
\end{equation}
\end{lem}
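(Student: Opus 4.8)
The plan is to bootstrap the well-posedness machinery already built for \eqref{eq:FB-PMP} to the subinterval $[t,T]$, to extract a deterministic decoupling field from it by a uniqueness-in-law argument, and then to read off the Lipschitz bound directly from Lemma~\ref{lem:stabilityTheta}.

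First I would observe that posing \eqref{eq:FB-PMP} on $[t,T]$ with $X^{\xi,\e}_{t,t}=\xi$ is structurally identical to the problem on $[0,T]$: the running coefficients are the same and only the time origin changes. Hence the continuation scheme of Lemma~\ref{lem:induction} and Theorem~\ref{thm:well-posedness fb-sde} applies word for word, the step size $\delta_0$ depending only on $T,L,\lambda$ and in particular being uniform in $t$ (the interval length $T-t$ is $\le T$). This produces the unique solution $(X^{\xi,\e}_{t,s},Y^{\xi,\e}_{t,s},Z^{\xi,\e}_{t,s})_{t\le s\le T}\in\mathscr{S}$, and Lemma~\ref{lem:stabilityTheta} holds verbatim on $[t,T]$ with an $\e$-independent constant.

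Next comes the construction of $\U^\e$. Pathwise uniqueness together with strong existence yields uniqueness in law, so the whole law of the solution --- hence the flow of marginals $s\mapsto\mathcal{L}(X^{\xi,\e}_{t,s})$ and the two McKean--Vlasov data appearing in \eqref{eq:FB-PMP} (the $\tilde\E[\partial_\mu H(\ldots)(\cdot)]$ drift term and the $\tilde\E[\partial_\mu g(\ldots)(\cdot)]$ terminal term) --- depend on $\xi$ only through $\mu:=\mathcal{L}(\xi)$. Freezing these data turns \eqref{eq:FB-PMP} into a standard forward--backward system with coefficients Lipschitz in $(x,y)$, for which the continuation method again gives unique solvability from any (deterministic or random) $\mathcal{F}_t$-measurable initial condition; by the classical decoupling-field theory for such systems (see \cite{CarmonaDelarueI}) there is a Borel map $x\mapsto\U^\e(t,x,\mu)$ with $Y^{\xi,\e}_{t,t}=\U^\e(t,\xi,\mu)$ $\mathbb{P}$-a.s., its measurability (indeed Lipschitzianity) in $x$ being inherited from the stability estimate evaluated at deterministic endpoints. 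Since the frozen data depend only on $(t,\mu)$, so does $\U^\e(t,\cdot,\mu)$; and because $(\Omega,\mathcal{F},\mathbb{P})$ is atomless every $\mu\in\Pn$ is realized by some $\xi$, so the recipe ``$\U^\e(t,\xi,\mathcal{L}(\xi)):=Y^{\xi,\e}_{t,t}$'' is consistent across all $\xi$ with a given law, again by uniqueness in law.

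Finally, the Lipschitz bound \eqref{def:lips_constant-master-field} is immediate: applying Lemma~\ref{lem:stabilityTheta} on $[t,T]$ with $\gamma=1$ (valid since $(S_1)$ holds on $[t,T]$ by the argument of Theorem~\ref{thm:well-posedness fb-sde}) and $I=I'=0$ gives $\|\Theta^1-\Theta^2\|_{\mathscr{S}}^2\le C\,\E[|\xi_1-\xi_2|^2]$ with $C=C(T,L,\lambda)$; evaluating the $Y$-component at $s=t$ and invoking $Y^{\xi_i,\e}_{t,t}=\U^\e(t,\xi_i,\mathcal{L}(\xi_i))$ a.s. yields the claim, with a constant independent of $\e$ --- precisely the feature that Section~4 will exploit. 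The only genuinely non-routine point is the passage from ``$Y^{\xi,\e}_{t,t}$ is a random variable'' to ``$Y^{\xi,\e}_{t,t}$ is a deterministic measurable function of $(\xi,\mathcal{L}(\xi))$'': this is where the freezing of the law and the decoupling-field/measurable-selection machinery enter, while everything else is a direct application of the estimates of Section~3.
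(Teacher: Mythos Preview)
Your proposal is correct and follows essentially the same route as the paper: well-posedness on $[t,T]$ via the continuation method of Theorem~\ref{thm:well-posedness fb-sde}, construction of the decoupling field via uniqueness in law (Yamada--Watanabe), and the Lipschitz bound \eqref{def:lips_constant-master-field} read off from Lemma~\ref{lem:stabilityTheta} at $\gamma=1$. The only stylistic difference is in how the map $x\mapsto\U^\e(t,x,\mu)$ is extracted: the paper argues directly that $Y^{\xi,\e}_{t,t}$ is $\sigma\{\xi\}$-measurable (hence a Borel function of $\xi$) and then shows, via Yamada--Watanabe, that two initial conditions with the same law yield the same function $\mu$-a.e.; you instead first freeze all the McKean--Vlasov data (legitimate, since the full joint law of $(X,Y)$ depends only on $\mu$) and then invoke the classical decoupling-field theory for the resulting standard FBSDE. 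Both arguments rest on the same ingredients and produce the same object; the paper's version is slightly more self-contained, yours slightly more modular.
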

\begin{proof}
Given $t\in[0,T)$ and $\xi\in L^2(\Omega,\mathscr{F}_t,\mathbb{P};\R^d)$, existence and uniqueness of a solution of \eqref{eq:FB-PMP} on $[t,T]$ with initial condition $\xi$ is a direct consequence of Theorem \ref{thm:well-posedness fb-sde}. We now proceed to define the decoupling field. First of all, note that $Y_{t,t}^{\xi,\e}$ coincide a.s. with a $\sigma\{\xi\}$-measurable $\R^d$-valued random variable. In particular, there exists $u_\xi^\e(t,\cdot):\R^d\to\R^d$ such that $\mathbb{P}\left[Y_{t,t}^{\xi,\e}=u_\xi^\e(t,\xi)\right]=1$. Moreover, the law of $(\xi,Y_{t,t}^{\xi,\e})$ only depends on the law of $\xi$, as a consequence of Yamada-Watanabe Theorem, see \cite[Theorem 1.33]{CarmonaDelarueII}. Since uniqueness holds pathwise, it also holds in law, so given two initial conditions with the same law, the solution  has the same law. Therefore, given another $\R^d$-valued random vector $\xi'$ with the same law of $\xi$, it holds that $(\xi,u^\e_\xi(t,\xi))$ has the same law of $(\xi',u^\e_{\xi'}(t,\xi'))$. In particular, for any measurable vector field $v:\R^d\to\R^d$, the random variables $u_\xi^\e(t,\xi)-v(\xi)$ and $u^\e_{\xi'}(t,\xi')-v(\xi')$ have the same law. Choosing $v=u_\xi^\e(t,\cdot)$ we deduce that $u^\e_\xi(t,\cdot)$ and $u^\e_{\xi'}(t,\cdot)$ are equal a.e. under the same law $\Law(\xi)$. This means that, by denoting $\Law(\xi)=\mu$, there exists an element $\U^\e(t,\cdot,\mu)\in L^2(\R^d;\mu)$ such that both $u_\xi^\e(t,\cdot)$ and $u_{\xi'}^\e(t,\cdot)$ coincide with $\U^\e(t,\cdot,\mu)$. Identifying $\U^\e(t,\cdot,\mu)$ with one of its versions, we have that $\mathbb{P}\left[ Y_{t,t}^{\xi,\e}=\U^\e(t,\xi,\mu) \right]=1$. When $t>0$, for any $\mu \in \Pn$ there exists a $\mathcal{F}_t$-measurable random variable $\xi$ with law $\mu$. As a consequence, this procedure allows to define $\U^\e(t,\cdot,\mu)$ for any $\mu\in\Pn$. Note that, when $t=0$, $\mathcal{F}_0$ is not trivial, i.e. it does not reduce to events of measure zero or one, since we assume that $(\Omega,\mathcal{F}_0, \mathbb{P})$ is atomless; thus it supports $\R^d$-valued random variables with arbitrary distributions, see discussion at the beginning of Section \ref{s-assJ}.


The fact that $\U^\e$ is independent from the probabilistic set-up $(\Omega,\mathcal{F}_t,\mathbb{P})$ directly follows from the uniqueness in law.

Finally, the Lipschitz property of $\mathcal{U}^\e(0,\cdot,\cdot)$ is a consequence of Lemma \ref{lem:stabilityTheta} with $\gamma=1$. Shifting time if necessary, the same argument applies to $\mathcal{U}^\e(t,\cdot,\cdot)$.
\end{proof}

\begin{rem}
It is worth to notice that the decoupling fields are different if the laws of the initial conditions are different.
\end{rem}

\section{The vanishing viscosity method}
In this section we prove our main result. We first build the optimal control for $\Ppe$ using the theory developed in Section 3. Then, we will provide some convergence lemma, which will be the core of the proof of Theorem \ref{thm:main}. To make the presentation smoother, we will always assume that {\bf Assumptions (A), (B), (C), (J)} hold, without recalling them.

\subsection{The viscous optimal control}
Let $\U^\e$ be the decoupling field given by Lemma \ref{lem:esistenza_master-field}. Thanks to Proposition \ref{prop:lip_function_measure}, for any $\mu\in\Pn$, we can consider a version of $x\mapsto\U^\e(t,x,\mu)$ in $L^2(\R^d,\mu)$ that is Lipschitz continuous with respect to $x$, for the same Lipschitz constant $C$ as in \eqref{def:lips_constant-master-field}. This is crucial for what follows.
\begin{lem}\label{lem:def-ottimo-viscoso}
Let $\hat{\alpha}$ be the minimizer of the Hamiltonian \eqref{def:hamiltoniana_stocastico} in $U$, and $\U^\e$ the decoupling field defined in Lemma \ref{lem:esistenza_master-field}. Then, the map $u^\e:[0,T]\times\R^d\to\R^d$ defined by
\begin{equation}\label{def:ottimo-viscoso}
u^\e(t,x)=\hat{\alpha}(\U^\e(t,x,\mu^\e_t)),
\end{equation}
is the unique optimal control for $\Ppe$. Moreover, the control $u^\e$ is Lipschitz continuous, uniformly with respect to time and viscosity coefficient: i.e., there exists a constant $L_{\lambda}>0$ independent on $t$ and $\e$ such that
\begin{equation}\label{def:costante-lip-ottimo-vis}
|u^\e(t,x)-u^\e(t,x')|\leq L_\lambda |x-x'|.
\end{equation}
\end{lem}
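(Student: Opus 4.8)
The plan is to combine the well-posedness of \eqref{eq:FB-PMP} (Theorem \ref{thm:well-posedness fb-sde}) with the optimality criterion of Theorem \ref{teo:ott-st-suff} and the Lipschitz bounds already established for $\hat{\alpha}$ (Lemma \ref{lem:lipcontr}) and for the decoupling field $\U^\e$ (Lemma \ref{lem:esistenza_master-field}). First I would argue optimality: fix $\e>0$ and let $\xi\in L^2(\Omega,\mathcal{F}_0,\mathbb{P};\R^d)$ have law $\mu_0$. By Theorem \ref{thm:well-posedness fb-sde} the system \eqref{eq:FB-PMP} has a unique solution $(X^\e_t,Y^\e_t,Z^\e_t)$, and the associated control $\hat{\alpha}^\e_t=\hat{\alpha}(Y^\e_t)$ satisfies, by construction of $\hat{\alpha}$ as the pointwise minimizer of the Hamiltonian, the hypothesis of Theorem \ref{teo:ott-st-suff}; hence $\hat{\alpha}^\e$ is the unique optimal control for (SOC). Now by Lemma \ref{lem:esistenza_master-field} we have $Y^\e_t=\U^\e(t,X^\e_t,\mu^\e_t)$ almost surely, where $\mu^\e_t=\Law(X^\e_t)$, so $\hat{\alpha}^\e_t=\hat{\alpha}(\U^\e(t,X^\e_t,\mu^\e_t))=u^\e(t,X^\e_t)$ with $u^\e$ as in \eqref{def:ottimo-viscoso}. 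It remains to transfer this optimality from the stochastic problem (SOC) to the PDE problem $\Ppe$: the law $\mu^\e_t$ of $X^\e_t$ solves the Fokker--Planck equation \eqref{eq:ad} with the feedback control $u^\e$ (this is the standard superposition/Itô computation: for $\varphi\in C^\infty_c$, $\de\,\E[\varphi(X^\e_t)]$ matches the weak formulation in Definition \ref{def:weak-sol-ad} with drift $b(t,x,\mu^\e_t)+u^\e(t,x)$), and $J^S(\hat{\bm\alpha}^\e)=J(\mu^\e,u^\e)$ because all terms in \eqref{def:costo_stocastico} are expectations of functions of $X^\e_t$, i.e. integrals against $\mu^\e_t$, and $\psi(\hat{\alpha}^\e_t)=\psi(u^\e(t,X^\e_t))$ contributes $\int\psi(u^\e(t,x))\mu^\e_t(\de x)$. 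Any admissible pair $(\nu,v)\in\mathcal{A}^\e$ for $\Ppe$ can conversely be realized by a controlled SDE with control $\alpha_t:=v(t,X_t)$, so $J(\nu,v)=J^S(\bm\alpha)\geq J^S(\hat{\bm\alpha}^\e)=J(\mu^\e,u^\e)$; together with uniqueness of the optimal control from Theorem \ref{teo:ott-st-suff} (strict inequality \eqref{stimaJmin} when $\alpha'\neq\hat{\alpha}$) this gives uniqueness of $u^\e$ as optimal control for $\Ppe$.

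For the uniform Lipschitz bound \eqref{def:costante-lip-ottimo-vis}: by Lemma \ref{lem:lipcontr}, $\hat{\alpha}$ is Lipschitz with constant $1/(2\lambda)$, independent of $\e$; by Lemma \ref{lem:esistenza_master-field} combined with Proposition \ref{prop:lip_function_measure}, for each $t$ and each $\mu\in\Pn$ we may select the version of $x\mapsto\U^\e(t,x,\mu)$ that is Lipschitz in $x$ with constant $\sqrt{C}$, where $C$ is the constant in \eqref{def:lips_constant-master-field} coming from Lemma \ref{lem:stabilityTheta} with $\gamma=1$ — crucially, that constant depends only on $T,L,\lambda$ and not on $\e$. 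Composing, $u^\e(t,\cdot)=\hat{\alpha}\circ\U^\e(t,\cdot,\mu^\e_t)$ is Lipschitz in $x$ with constant $L_\lambda:=\sqrt{C}/(2\lambda)$, uniform in $t$ and $\e$. One should note that $\U^\e(t,\cdot,\mu^\e_t)$ is used at the specific measure argument $\mu=\mu^\e_t$, which is exactly the law of the forward process, so the a.s.\ identity $Y^\e_t=\U^\e(t,X^\e_t,\mu^\e_t)$ is consistent with the Lipschitz-in-$x$ version being the one that appears in the feedback law.

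The main obstacle is the rigorous passage between the stochastic control problem and the PDE-constrained problem $\Ppe$ — specifically, verifying (a) that $\mu^\e_t=\Law(X^\e_t)$ is a weak solution of \eqref{eq:ad} in the sense of Definition \ref{def:weak-sol-ad} with the non-local drift evaluated along its own law, which requires checking the integrability of $b(t,x,\mu^\e_t)+u^\e(t,x)$ against $\mu^\e_t$ (immediate from {\bf Assumption (B)}, compactness of $U$, and the moment bound $C_1$ of Lemma \ref{lem:constant-fsde}), and (b) that every admissible control for $\Ppe$ lifts to an admissible control for (SOC), for which one invokes the well-posedness Theorem \ref{thm:well-posedness-fsde} with $\alpha_t=v(t,X_t)$ — this is legitimate since $v(t,\cdot)\in\Lip(\R^d,U)$, so the drift $b+v$ is still Lipschitz and the SDE is well-posed. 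A secondary technical point is ensuring that the Lipschitz-in-$x$ selection of $\U^\e$ is jointly measurable in $(t,x)$, so that $u^\e$ is a bona fide element of $L^\infty((0,T);\Lip(\R^d,U))$; this follows from the measurability statement in Lemma \ref{lem:esistenza_master-field} together with the fact that redefinition on $\mu$-null sets (Proposition \ref{prop:lip_function_measure}) preserves measurability in $t$.
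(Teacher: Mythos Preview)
Your proof is correct and follows essentially the same route as the paper: Lipschitz continuity is obtained by composing the Lipschitz bound on $\hat{\alpha}$ from Lemma~\ref{lem:lipcontr} with the Lipschitz bound on $x\mapsto\U^\e(t,x,\mu^\e_t)$ from Lemma~\ref{lem:esistenza_master-field} and Proposition~\ref{prop:lip_function_measure}; optimality is obtained by identifying $J(\mu^\e,u^\e)$ with $J^S(\hat{\bm\alpha}^\e)$ via the feedback representation $\hat{\alpha}^\e_t=u^\e(t,X^\e_t)$, and then comparing with any competitor $w$ for $\Ppe$ by lifting it to the stochastic control $\alpha_t=w(t,X^{w,\e}_t)$ and invoking the strict minimality in Theorem~\ref{teo:ott-st-suff}. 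The paper does the two halves in the opposite order and is terser about the Fokker--Planck identification and measurability, but the argument is the same. One small inaccuracy: you assert that a generic admissible $v\in\mathcal{A}^\e$ satisfies $v(t,\cdot)\in\Lip(\R^d,U)$, which is not guaranteed by the definition of $\mathcal{A}^\e$; the paper sidesteps this by simply positing the controlled process $X^{w,\e}_t$ without further justification, so your treatment is no worse, but the clean fix is to start from the PDE solution $\mu^{w,\e}$ and invoke a superposition principle rather than strong SDE well-posedness.
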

\begin{proof}
First remark that Lipschitz continuity follows from Lemma \ref{lem:lipcontr}, Lemma \ref{lem:esistenza_master-field} and Proposition \ref{prop:lip_function_measure} it holds
\begin{equation}
|u^\e(t,x)-u^\e(t,x')|= |\hat{\alpha}(\U^\e(t,x,\mu^\e_t))-\hat{\alpha}(\U^\e(t,x',\mu^\e_t))|\leq L_\lambda |x-x'|,
\end{equation}
for some constant $L_\lambda$ which only depends on the norms of $b,f,g$ and on $\lambda$. We now prove optimality. Let $w:[0,T]\times\R^d\to U$ be an admissible control for $\Ppe$. Then, the vector field $\alpha_t$ defined as
$$
\alpha_t=w(t,X^{w,\e}_t)
$$
is an admissible control for (SOC) where $X^{w,\e}_t$ is the associated trajectory. With this particular choice, the associated cost functional can be rewritten as
\begin{equation*}
J^S(\bm\alpha)=\int_0^T\int_{\R^d} \left(f(t,x,\mu^{w,\e}_t)+\psi(w(t,x))\right)\mu^{w,\e}_t(\de x) \de t +\int_{\R^d}g(x,\mu^{w,\e}_T)\mu^{w,\e}_T(\de x)=J(\mu^{w,\e},w),
\end{equation*}
where $\mu^{w,\e}_t$ is the law $\Law(X^{w,\e}_t)$ of the controlled trajectory $X^{w,\e}_t$. Then, by the strict minimality property of $\hat{\alpha}^\e_t$ for (SOC), it holds
\begin{align*}
J(\mu^\e,u^\e)=&\int_0^T\int_{\R^d} \left(f(t,x,\mu^\e_t)+\psi(u^\e(t,x))\right)\mu^\e_t(\de x) \de t +\int_{\R^d}g(x,\mu^\e_T)\mu^\e_T(\de x)\\
=&\E\left[ g(X^\e_T,\mathcal{L}(X^\e_T))+\int_0^T \left(f(t,X^\e_t,\mathcal{L}(X^\e_t))+\psi(\hat{\alpha}_t^\e)\right)\de t \right]\\
<&\E\left[ g(X^{w,\e}_T,\mathcal{L}(X^{w,\e}_T))+\int_0^T \left(f(t,X^{w,\e}_t,\mathcal{L}(X^{w,\e}_t))+\psi(\alpha_t)\right)\de t \right]\\
=&\int_0^T\int_{\R^d} \left(f(t,x,\mu^{w,\e}_t)+\psi(w(t,x))\right)\mu^{w,\e}_t(\de x) \de t +\int_{\R^d}g(x,\mu^{w,\e}_T)\mu^{w,\e}_T(\de x),
\end{align*}
for any admissible control $w$. This proves the optimality of $u^\e$ and concludes the proof.
\end{proof}

\subsection{Convergence lemmas}
In this section we prove a series of useful convergence estimates, that will be the key tools to prove our main theorem.

\begin{lem}\label{lem:comp-lip-maps}
Let $K\subset\R^d$ be a bounded set and $C_K>0$ a fixed constant. Define
\begin{equation}\label{lem:weak-comp-bochner}
    \mathcal{A}_K:=\{ u\in L^2((0,T);W^{1,\infty}(K,U)):\,\sup_{t\in (0,T)}\|u(t,\cdot)\|_{W^{1,\infty}(K,U)}\leq C_K \}.
\end{equation}
Then, $\mathcal{A}_K$ is compact in the weak $L^2((0,T);W^{1,p}(K,U))$-topology for any $p\in(1,\infty)$.
\end{lem}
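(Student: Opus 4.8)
The plan is to realize $\mathcal{A}_K$ as a bounded, convex, closed subset of the reflexive Banach space $X := L^2((0,T);W^{1,p}(K;\R^d))$ and invoke the fact that bounded closed convex sets in reflexive spaces are weakly compact (Banach--Alaoglu plus Eberlein--Šmulian, or equivalently Kakutani's theorem). Since $p\in(1,\infty)$ and $K$ is bounded, $W^{1,p}(K)$ is reflexive and separable, hence so is $X$; thus weak compactness is equivalent to weak sequential compactness, and it suffices to show $\mathcal{A}_K$ is (i) bounded in $X$, (ii) convex, and (iii) weakly sequentially closed in $X$.

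First I would check boundedness: for $u\in\mathcal{A}_K$, since $K$ has finite Lebesgue measure $|K|$, we have $\|u(t,\cdot)\|_{W^{1,p}(K)}\leq |K|^{1/p}\|u(t,\cdot)\|_{W^{1,\infty}(K)}\leq |K|^{1/p}C_K$ for a.e. $t$, whence $\|u\|_X^2 = \int_0^T \|u(t,\cdot)\|_{W^{1,p}(K)}^2\,\de t \leq T|K|^{2/p}C_K^2$. Convexity is immediate: the constraints ``$u(t,x)\in U$ a.e.'' (with $U$ convex), ``$\esssup_{t}\|u(t,\cdot)\|_{W^{1,\infty}(K)}\leq C_K$'' are all preserved under convex combinations, the latter because the $W^{1,\infty}$ norm is a norm and the essential supremum of a convex combination is bounded by the convex combination of the essential suprema.

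The main work is (iii), weak sequential closedness. Let $u_n\in\mathcal{A}_K$ with $u_n\weakto u$ in $X$. I must show $u\in\mathcal{A}_K$, i.e. that $u$ has a representative with $u(t,x)\in U$ a.e. and $\esssup_t\|u(t,\cdot)\|_{W^{1,\infty}(K)}\leq C_K$. The constraint set $\{v\in X : v(t,x)\in U \text{ a.e.}, \ \|v\|_{L^2((0,T);W^{1,\infty}(K))}\leq C_K\}$ is convex (as above) and \emph{strongly} closed in $X$: if $v_n\to v$ in $X$, a subsequence converges a.e. in $(0,T)\times K$, so the pointwise constraint $v(t,x)\in U$ passes to the limit since $U$ is closed; and for the norm bound, one uses that for a.e. fixed $t$, $v_n(t,\cdot)\to v(t,\cdot)$ in $W^{1,p}(K)$ along a further subsequence, together with lower semicontinuity of the $W^{1,\infty}$-norm under $W^{1,p}$-convergence and Fatou's lemma in $t$ to conclude $\esssup_t\|v(t,\cdot)\|_{W^{1,\infty}(K)}\leq C_K$. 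A convex strongly closed set is weakly closed by Mazur's lemma, which gives the claim.

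The step I expect to be the main obstacle is making the $W^{1,\infty}$ bound survive the weak limit rigorously: weak $L^2$-in-time convergence does not by itself give pointwise-in-$t$ convergence, so one passes through Mazur's lemma to reduce to strong convergence, extracts an a.e.-in-$t$ subsequence, and then handles the spatial $W^{1,\infty}$ bound via the elementary fact that the unit ball of $W^{1,\infty}(K)$ is closed under $W^{1,p}(K)$-convergence (a diagonal/density argument, or directly: weak-$*$ limits of gradients bounded in $L^\infty$ stay bounded in $L^\infty$ by the same constant). One should also note the harmless point that $\mathcal{A}_K\neq\emptyset$ (e.g. any constant control with value in $U$ belongs to it, since $0\in U$ may be assumed after translation, or simply pick any $\bar u\in U$). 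Assembling (i)--(iii) and applying the Kakutani/Eberlein--Šmulian theorem finishes the proof.
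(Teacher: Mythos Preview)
Your argument is correct and complete. The paper does not actually give a proof of this lemma: it simply refers the reader to \cite[Theorem 2.5]{MFOC}. Your approach---embedding $\mathcal{A}_K$ into the reflexive space $L^2((0,T);W^{1,p}(K))$, checking boundedness and convexity, and then obtaining weak closedness from strong closedness via Mazur's lemma---is the standard way to establish such results and is essentially what lies behind the cited reference. The only delicate step, as you correctly anticipated, is verifying that the $W^{1,\infty}$ bound survives in the limit; your reduction to strong convergence via Mazur, followed by a.e.-in-$t$ extraction and the observation that $L^\infty$ bounds on $v_n$ and $\nabla v_n$ pass to a.e.\ limits, handles this cleanly. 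So your write-up in fact supplies what the paper omits.
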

\begin{proof}
See e.g. \cite[Theorem 2.5]{MFOC}
\end{proof}

We have the following convergence result for the sequence $u^\e$ defined in \eqref{def:ottimo-viscoso}.

\begin{cor}[Convergence of the controls]\label{lem:esistenza_controllo}
Let $u^\e$ be the sequence of optimal controls given by \eqref{def:ottimo-viscoso}. Then, there exist a sub-sequence, which we do not relabel, and a map $u\in L^\infty((0,T);W^{1,\infty}(\R^d,U))$ such that, for every $1< p<\infty$, the following convergence holds
\begin{equation}\label{conv-controlli}
u^\e\weakto u\,\,\mbox{ in }L^2((0,T);W^{1,p}_\mathrm{loc}(\R^d,U)), \qquad \mbox{as } \e\to 0.
\end{equation}
\end{cor}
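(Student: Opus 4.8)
The plan is to deduce the corollary directly from the uniform Lipschitz estimate of Lemma~\ref{lem:def-ottimo-viscoso} together with the weak compactness of Lemma~\ref{lem:comp-lip-maps}, combined with a diagonal argument over an exhaustion of $\R^d$ by balls. First I would record that, since the feedback map $\hat{\alpha}$ takes values in $U$, each $u^\e$ is $U$-valued, hence $\|u^\e(t,\cdot)\|_{L^\infty(\R^d)}\le R$ for a.e. $t$, with $R$ as in \ref{ass:setofcontrols}; combined with the bound $|u^\e(t,x)-u^\e(t,x')|\le L_\lambda|x-x'|$ from \eqref{def:costante-lip-ottimo-vis}, this shows that for $B_n:=\{x\in\R^d:|x|<n\}$ and $C_K:=R+L_\lambda$ (independent of $n$ and $\e$) one has $u^\e\in\mathcal{A}_{B_n}$, the set defined in \eqref{lem:weak-comp-bochner} with $K=B_n$.

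Next I would fix the exponent $p=2$ and apply Lemma~\ref{lem:comp-lip-maps}: for each $n\in\N$ the sequence $u^\e$ has a subsequence converging weakly in $L^2((0,T);W^{1,2}(B_n,U))$, and a diagonal extraction over $n$ yields a single subsequence $\e_k\to 0$ (not relabeled) and a single measurable map $u$ with $u^{\e_k}\weakto u$ in $L^2((0,T);W^{1,2}(B_n,U))$ for every $n$, the limits on different balls agreeing by uniqueness of weak limits. To upgrade this to every $p\in(1,\infty)$: for $p\le 2$ the continuous embedding $W^{1,2}(B_n)\hookrightarrow W^{1,p}(B_n)$ transfers the weak convergence; for $p>2$ the sequence $u^{\e_k}$ is bounded in the reflexive space $L^2((0,T);W^{1,p}(B_n,U))$ by the very same $W^{1,\infty}$ bound, so every subsequence has a further one converging weakly, necessarily to $u$ (again via the embedding $W^{1,p}\hookrightarrow W^{1,2}$ and uniqueness of the weak limit), hence $u^{\e_k}\weakto u$ in $L^2((0,T);W^{1,p}(B_n,U))$. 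Since $n$ is arbitrary, this is precisely \eqref{conv-controlli}.

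Finally, to identify the target space of $u$, I would use that each $\mathcal{A}_{B_n}$ is weakly closed, being weakly compact by Lemma~\ref{lem:comp-lip-maps}; hence the restriction of $u$ to $(0,T)\times B_n$ lies in $\mathcal{A}_{B_n}$, which by definition means $u(t,x)\in U$ for a.e. $(t,x)$ and $\sup_{t\in(0,T)}\|u(t,\cdot)\|_{W^{1,\infty}(B_n)}\le C_K$. Letting $n\to\infty$, and recalling that $C_K$ does not depend on $n$, gives $u\in L^\infty((0,T);W^{1,\infty}(\R^d,U))$. The only point requiring care is the bookkeeping in the diagonal extraction, so that one ends with a single limit $u$ valid on every ball and for every exponent simultaneously; this rests on uniqueness of weak limits and on the fact that the $W^{1,\infty}$ estimate bounds $u^\e$ uniformly in $\e$ in $L^2((0,T);W^{1,p}(B_n))$ for all $p\in(1,\infty)$ at once. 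Everything else follows immediately from Lemmas~\ref{lem:def-ottimo-viscoso} and \ref{lem:comp-lip-maps}.
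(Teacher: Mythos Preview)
Your proposal is correct and follows essentially the same approach as the paper: both combine the uniform $W^{1,\infty}$ bound from Lemma~\ref{lem:def-ottimo-viscoso} with the weak compactness of Lemma~\ref{lem:comp-lip-maps}, and the paper even records the same constant $C_K=\max\{L_\lambda,R\}$. The paper's proof is a two-line ``direct application'' that leaves the diagonal extraction over balls, the passage to arbitrary $p\in(1,\infty)$, and the identification of the limit space implicit; your write-up simply makes these routine steps explicit.
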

\begin{proof}
The result is a direct application of Lemma \ref{lem:comp-lip-maps} together with Lemma \ref{lem:def-ottimo-viscoso}. 
The constant $C_K$ appearing in Lemma \ref{lem:comp-lip-maps} is chosen as $\max\{L_\lambda,R\}$ where $L_\lambda$ is the constant in \eqref{def:costante-lip-ottimo-vis} and $R$ is defined in \ref{ass:setofcontrols}. In particular, the constant $C_K$ does not depend on $\e$.
\end{proof}
We now show the convergence of the optimal trajectories.
\begin{lem}[Convergence of the trajectories]\label{lem:conv_soluzioni}
Let $u,u^\e$ be given by Corollary \ref{lem:esistenza_controllo} 
and let $\mu,\mu^\e$ be the unique solution of the deterministic \eqref{eq:c} and the viscous equation \eqref{eq:ad} with vector field $b$, and control $u,u^\e$ respectively. It then holds
\begin{equation}\label{conv:we}
\lim_{\e\to 0}\sup_{t\in(0,T)}W_2(\mu^\e_t,\mu_t)=0.
\end{equation}
\end{lem}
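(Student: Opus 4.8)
The plan is to estimate $W_2(\mu_t^\e,\mu_t)$ by comparing $\mu_t^\e=\Law(X_t^\e)$, where $X_t^\e$ solves the McKean-Vlasov SDE
\begin{equation*}
\de X_t^\e=\bigl(b(t,X_t^\e,\Law(X_t^\e))+u^\e(t,X_t^\e)\bigr)\de t+\sqrt{2\e}\,\de W_t,\qquad X_0^\e=\xi,
\end{equation*}
with $\mu_t=\Law(\bar X_t)$, where $\bar X_t$ solves the ODE flow $\dot{\bar X}_t=b(t,\bar X_t,\Law(\bar X_t))+u(t,\bar X_t)$, $\bar X_0=\xi$, on the same probability space. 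Since $W_2(\mu_t^\e,\mu_t)^2\leq \E[|X_t^\e-\bar X_t|^2]$, it suffices to bound this last quantity. Writing $\Delta_t:=X_t^\e-\bar X_t$, Itô's formula gives
\begin{equation*}
\E[|\Delta_t|^2]=2\E\!\int_0^t \Delta_s\cdot\bigl(b(s,X_s^\e,\Law(X_s^\e))-b(s,\bar X_s,\Law(\bar X_s))+u^\e(s,X_s^\e)-u(s,\bar X_s)\bigr)\de s+2\e d\,t,
\end{equation*}
the $2\e d\,t$ term coming from the quadratic variation of $\sqrt{2\e}\,W_t$. The drift difference is split: the $b$-part is handled by Assumption (B3) and $W_2(\Law(X_s^\e),\Law(\bar X_s))^2\leq\E[|\Delta_s|^2]$, contributing a term $\lesssim \E[|\Delta_s|^2]$; the control part is split further as $u^\e(s,X_s^\e)-u^\e(s,\bar X_s)$ plus $u^\e(s,\bar X_s)-u(s,\bar X_s)$. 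The first piece is controlled by the uniform Lipschitz bound $L_\lambda$ from Lemma \ref{lem:def-ottimo-viscoso}, again giving $\lesssim \E[|\Delta_s|^2]$. Collecting these, Grönwall's inequality yields
\begin{equation*}
\sup_{t\in(0,T)}\E[|\Delta_t|^2]\leq C(T,L,L_\lambda)\Bigl(\e dT+\E\!\int_0^T|u^\e(s,\bar X_s)-u(s,\bar X_s)|^2\de s\Bigr),
\end{equation*}
so everything reduces to showing that the last integral tends to $0$ as $\e\to0$.

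This last step is the main obstacle, because the convergence $u^\e\weakto u$ in Corollary \ref{lem:esistenza_controllo} is only \emph{weak} in $L^2((0,T);W^{1,p}_{\mathrm{loc}})$, whereas we need a strong-type statement along the fixed reference flow $\bar X_s$. The key observation is that $\{u^\e\}$ is uniformly bounded in $L^\infty((0,T);W^{1,\infty}(\R^d,U))$ and the trajectories $\bar X_s$ remain in a fixed compact set $B_r$ by Theorem \ref{thm:well_pos_cont} (since $\supp\mu_0$ is compact). On $(0,T)\times B_r$, the family $\{u^\e\}$ is uniformly Lipschitz in $x$ and uniformly bounded, so by Arzelà-Ascoli in $x$ (for a.e.\ $t$) together with dominated convergence, the weak limit $u$ is Lipschitz in $x$ uniformly in $t$, and one can upgrade the weak convergence to strong convergence in $L^2((0,T);L^2(K))$ for every compact $K$: indeed a weakly convergent sequence that is bounded in $L^2((0,T);W^{1,p}(K))$ converges strongly in $L^2((0,T);L^p(K))$ by the Aubin-Lions-type compactness used in Lemma \ref{lem:comp-lip-maps} (or directly: bounded in $W^{1,p}(K)$ uniformly in $t$ plus the equicontinuity makes the embedding into $L^p(K)$ compact, and no time-derivative bound is needed since the bound is pointwise in $t$). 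Passing to a subsequence we then get $u^\e(s,\cdot)\to u(s,\cdot)$ in $L^p(B_r)$ for a.e.\ $s$, hence, since $\Law(\bar X_s)=\mu_s$ has a bounded density is \emph{not} assumed — here one instead uses that $u^\e\to u$ uniformly on $B_r$ for a.e.\ $s$ (again by Arzelà-Ascoli, the uniform Lipschitz+uniform bound giving precompactness in $C(B_r)$ and the $L^2$-limit identifying the limit), so that
\begin{equation*}
\E\!\int_0^T|u^\e(s,\bar X_s)-u(s,\bar X_s)|^2\de s\leq \int_0^T\|u^\e(s,\cdot)-u(s,\cdot)\|_{L^\infty(B_r)}^2\de s\longrightarrow 0
\end{equation*}
by dominated convergence (the integrand is bounded by $(2R)^2$).

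Finally, since the limit is independent of the subsequence — the limit $\mu_t$ is the unique solution of \eqref{eq:c} with control $u$ by Theorem \ref{thm:well_pos_cont}, and $u$ is the fixed weak limit from Corollary \ref{lem:esistenza_controllo} — the whole family converges, giving $\sup_{t\in(0,T)}W_2(\mu_t^\e,\mu_t)\to 0$. One small point to check is that the fixed-flow ODE $\dot{\bar X}_t=b(t,\bar X_t,\Law(\bar X_t))+u(t,\bar X_t)$ genuinely has $\Law(\bar X_t)=\mu_t$; this holds because $u(t,\cdot)$ is Lipschitz (as the uniform limit of uniformly Lipschitz maps) so the characteristics are well-defined and their pushforward solves \eqref{eq:c}, which by uniqueness must be $\mu$. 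I expect the delicate part to be precisely this passage from the weak convergence of $u^\e$ to the strong/uniform convergence on the relevant compact set, relying on the $\e$-uniform Lipschitz bound $L_\lambda$ as the source of compactness.
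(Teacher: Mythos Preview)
Your coupling/Gr\"onwall approach is genuinely different from the paper's, and it has a real gap at the point you yourself flag as ``delicate''.

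The paper does \emph{not} try to upgrade the weak convergence of $u^\e$. Instead it works entirely at the PDE level: first it shows that $\{\mu^\e\}$ is relatively compact in $C([0,T];\Pn)$ by Ascoli--Arzel\`a (uniform second-moment bound plus $W_2(\mu^\e_t,\mu^\e_s)^2\leq C|t-s|$), extracts a limit $\rho$, and then passes to the limit in the weak formulation of \eqref{eq:ad}. The crucial control term is handled by splitting
\[
\int_0^T\!\!\int u^\e\cdot\nabla\varphi\,(\mu^\e_t-\rho_t)\,\de t
\;+\;
\int_0^T\!\!\int u^\e\cdot\nabla\varphi\,\rho_t\,\de t,
\]
where the first piece goes to zero by Kantorovich--Rubinstein (since $u^\e\nabla\varphi$ is uniformly Lipschitz and compactly supported), and the second piece uses only the \emph{weak} convergence $u^\e\rightharpoonup u$ against the fixed element $\nabla\varphi\,\rho\in L^2((0,T);W^{-1,p'})$. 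Uniqueness for \eqref{eq:c} then forces $\rho=\mu$ and upgrades to full-sequence convergence.

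Your argument, by contrast, needs
\[
\E\!\int_0^T|u^\e(s,\bar X_s)-u(s,\bar X_s)|^2\de s\longrightarrow 0,
\]
i.e.\ a \emph{strong}-type convergence of $u^\e$ along the fixed flow $\bar X$. The justification you give fails: a uniform bound in $L^\infty((0,T);W^{1,\infty}(K))$ together with weak convergence in $L^2((0,T);W^{1,p}(K))$ does \emph{not} imply strong convergence in $L^2((0,T);L^p(K))$ or $L^2((0,T);C(K))$ without some regularity in $t$ (this is precisely what Aubin--Lions requires, and Lemma~\ref{lem:comp-lip-maps} gives only weak compactness). A concrete obstruction is time oscillation: take $v^\e(t,x)=\sin(t/\e)\,\phi(x)$ with $\phi$ Lipschitz and bounded into $U$; this family is bounded in $L^\infty_tW^{1,\infty}_x$, converges weakly to $0$ in $L^2_tW^{1,p}_x$, yet $\int_0^T\|v^\e(t,\cdot)\|_{L^\infty(K)}^2\de t\not\to 0$. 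Arzel\`a--Ascoli in $x$ for a.e.\ $t$ also does not help: it gives subsequential limits at each $t$, but weak $L^2_t$ convergence does not identify those pointwise limits with $u(t,\cdot)$. Since the paper establishes no time regularity for $u^\e$, your step cannot be completed with the available ingredients.

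In short: the Gr\"onwall skeleton is fine, but the passage from weak to strong convergence of the controls is a genuine missing idea. The paper's route avoids this entirely by putting the weak convergence on the PDE side, where linear testing against a fixed compactly supported object suffices.
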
 
\begin{proof}
We divide the proof in two steps.\\
\\
\underline{\em Step 1} \hspace{0.3cm} {\bf Compactness of the sequence $\mu^\e$.}
We start by proving compactness of $\{\mu^\e\}_{\e>0}$ in $C([0,T];\Pn)$ as a consequence of Ascoli-Arzelà's Theorem. First of all, we exploit a uniform bound on the second moment of $\mu_t^\e$: since $\mu^\e_t\in\Pn$, one can use $|x|^2$ as a test function in the equation, obtaining
\begin{align*}
    \int_{\R^d}|x|^2\mu^\e_t(\de x)&=\int_{\R^d}|x|^2\mu_0(\de x)+2\int_0^t\int_{\R^d}u^\e(s,x)\cdot x\,\mu^\e_s(\de x)\de s\\
    &+2\int_0^t\int_{\R^d}b(s,x,\mu^\e_s)\cdot x\,\mu^\e_s(\de x)\de s+2\e t.
\end{align*}
For the term involving the control, from \ref{ass:setofcontrols} we easily get
\begin{align*}
    2\left|\int_0^t\int_{\R^d}u^\e(s,x)\cdot x\mu^\e_s(\de x)\de s\right|&\leq 2R\int_0^t\int_{\R^d}|x|\mu^\e_s(\de x)\de s\\
    &\leq 2RT+2R\int_0^t\int_{\R^d}|x|^2\mu^\e_s(\de x)\de s.
\end{align*}
On the other hand, from \ref{ass:vf2} and Young inequality we get
\begin{align*}
    2\left|\int_0^t\int_{\R^d}b(s,x,\mu^\e_s)\cdot x\mu^\e_s(\de x)\de s\right|&\leq 2M\int_0^t\int_{\R^d}|x|\mu^\e_s(\de x)\de s+2M\int_0^t\int_{\R^d}|x|^2\mu^\e_s(\de x)\de s\\
    &+\int_0^t\left(\int_{\R^d}|x|\mu^\e_s(\de x)\right)^2\de s\\
    &\leq 2MT+5M\int_0^t\int_{\R^d}|x|^2\mu^\e_s(\de x)\de s. 
\end{align*}
Thus, being $0<\e<1$, we obtain
\begin{align*}
    \int_{\R^d}|x|^2\mu^\e_t(\de x)&\leq \int_{\R^d}|x|^2\mu_0(\de x) + 2(1+M+R)T\\
    &+(2R+5M)\int_0^t\int_{\R^d}|x|^2\mu^\e_s(\de x)\de s,
\end{align*}
and then Gronwall's lemma gives that
\begin{equation}\label{est:M2eps}
    \sup_{t\in(0,T)}\int_{\R^d}|x|^2\,\mu^\e_t(\de x)\leq\left[M_2(\mu_0)+2(1+M+R)T \right]e^{(2R+5M)T},
\end{equation}
providing a uniform bound on $M_2(\mu^\e_t)$. This means that the sequence $\{\mu^\e\}_{\e>0}$ takes values in a relatively compact set in $\Pn$ (endowed with $W_2$). Next, we show that the family $\{ \mu^\e\}_{\e>0}$ is equi-continuous in $C([0,T];\Pn)$. Let $X^\e_t$ be a solution of \eqref{eq:SDE-generica} with law $\mu^\e_t$, then by Proposition \ref{prop:w2-stoc} we have that
$$
W_2(\mu^\e_t,\mu^\e_s)^2 \leq \E\left[|X^\e_t-X^\e_s|^2\right].
$$
By using equation \eqref{eq:SDE-generica} we compute
\begin{align*}
    W_2(\mu^\e_t,\mu^\e_s)^2 & \leq \E\left[|X^\e_t-X^\e_s|^2\right]\\
    &\leq 4T\int_s^t\E\left[ |b(\tau, X^\e_\tau,\mu^\e_\tau)|^2+|\alpha_\tau^\e|^2\right]\de\tau +4\e\E\left[|W_t-W_s|^2\right]\\
    &\leq 4T\left(M^2\int_s^t[1+|X^\e_\tau|^2+M_2(\mu^\e_\tau)^2]\de \tau+ R^2|t-s|+c(d)|t-s|\right)\\
    &\leq C(\mu_0,T,M,R)|t-s|,
\end{align*}
which implies equi-continuity of the sequence $\{ \mu^\e\}_{\e>0}$ in $C([0,T]; \Pn )$. Since $\Pn$ is a complete metric space \cite{villani1}, by Ascoli-Arzelà's Theorem (see \cite[Proposition 3.3.1]{AGS}) the sequence $\{\mu^\e\}_{\e>0}$ is relatively compact in $C([0,T];\Pn)$ for every $T>0$. Then, up to a sub-sequence that we do not relabel, there exists a probability measure $\rho\in C([0,T];\Pn)$ such that
\begin{equation}\label{weak2}
\mu^\e\to \rho \,\mbox{ in } C([0,T];\Pn),
\end{equation}
which means that
\begin{equation}\label{conv:w2nu}
    \lim_{\e\to 0}\sup_{t\in(0,T)}W_2(\mu^\e_t,\rho_t)=0.
\end{equation}

\underline{\em Step 2} \hspace{0.3cm} {\bf Identification of the limit.}
In this step we show that $\rho$ is a solution of \eqref{eq:c}. This will imply that, by uniqueness, it holds $\rho=\mu$ and the whole sequence $\mu^\e$ converges to $\mu$. Let $\varphi\in C^\infty_c([0,T)\times\R^d)$: by Definition \ref{def:weak-sol-ad} we have that
$$
\int_0^T\int_{\R^d}\Big(\partial_t\varphi(t,x)+(b(t,x,\mu^\e_t)+u^\e(t,x))\cdot\nabla\varphi(t,x)+\e\Delta\varphi(t,x)\Big)\mu^\e_t(\de x)\de t=\int_{\R^d}\varphi(0,x)\mu_0(\de x).
$$
Notice that by Proposition \ref{prop:conv_w} we know that \eqref{weak2} implies weak convergence, thus it holds
$$
\lim_{\e\to 0} \int_0^T\int_{\R^d}\Big(\partial_t\varphi(t,x)+\e\Delta\varphi(t,x)\Big)\mu^\e_t(\de x)\de t=\int_0^T\int_{\R^d}\partial_t\varphi(t,x)\rho_t(\de x)\de t.
$$
We now consider the term involving the control. Denote by $K:=\supp(\varphi)$ and \\ $C_K:=\|u^\e\|_{L^\infty((0,T);L^\infty(K))}$. It then holds
$$
\Lip(\varphi(t,\cdot) u^\e(t,\cdot))\leq L \|\varphi\|_{L^\infty}+C_K\|\nabla\varphi\|_{L^\infty}:=C_\varphi.
$$
Then, by Lemma \ref{lem:Kant-Rub}, it holds
\begin{equation}
    \limsup_{\e}\left| \int_0^T\int_{\R^d}\varphi(t,x) u^\e(t,x) \Big(\mu^\e_t(\de x)-\rho_t(\de x)\Big)\de t \right|\leq C_\varphi\limsup_\e \int_0^T W_1(\mu^\e_t,\rho_t)\de t,
\end{equation}
which converges to $0$ as $\e\to 0$, by using \eqref{conv:w2nu} and \eqref{p-ordered}. On the other hand, it holds
$$
\lim_{\e\to 0}\int_0^T\int_{\R^d}\varphi(t,x) u^\e(t,x)\rho_t(\de x)\de t=\int_0^T\varphi (t,x)u(t,x)\rho_t(\de x)\de t,
$$
due to convergence in \eqref{conv-controlli} and the fact that $\varphi\rho$ belongs to $L^\infty((0,T);W^{-1,p'}(\R^d,\R^d))$ and has compact support. Then, we have shown
$$
\lim_{\e\to 0} \int_0^T\int_{\R^d}u^\e(t,x)\cdot\nabla\varphi(t,x)\mu^\e_t(\de x)\de t=\int_{\R^d}u(t,x)\cdot\nabla\varphi(t,x)\rho_t(\de x)\de t.
$$
We are left to prove convergence in the non-linear term: on one hand the convergence \eqref{weak2} implies that
\begin{equation}
\lim_{\e\to 0}\int_0^T\int_{\R^d}b(t,x,\rho_t)\cdot\nabla\varphi(t,x)\mu^\e_t(\de x)\de t=\int_0^T\int_{\R^d}b(t,x,\rho_t)\cdot\nabla\varphi(t,x)\rho_t(\de x)\de t.
\end{equation}
On the other hand, the uniform Lipschitz assumption \ref{ass:vf3} on $b$ implies that
\begin{equation}
\left|\int_0^T\int_{\R^d} \left(b(t,x,\mu^\e_t)-b(t,x,\rho_t)\right)\cdot\nabla\varphi(t,x)\mu^\e_t(\de x)\de t\right|\leq L\|\nabla\varphi\|_{L^\infty} \sup_{t\in (0,T)}W_2(\mu^\e_t,\rho_t),
\end{equation}
which converges to $0$ when $\e\to 0$ thanks to the convergence in \eqref{conv:w2nu}. Hence, $\rho_t$ is a solution of \eqref{eq:c} and this concludes the proof.
\end{proof}

\begin{lem}[Convergence of the cost] \label{lem:convergenza_costo}
Let $u,u^\e$ be given by Lemma \ref{lem:esistenza_controllo}
and let $\mu,\mu^\e$ be the corresponding unique solution of the deterministic \eqref{eq:c} and the viscous equation \eqref{eq:ad} with vector field $b$, and control $u,u^\e$ respectively. Let $w$ be an admissible control for $\Ppe$: then we have
\begin{equation}
\lim_{\e\to 0} J(\mu^\e,w)=J(\mu,w).
\end{equation}
Moreover, if $u^\e$ is the sequence of optimal controls as in \eqref{lem:esistenza_controllo}, we have that
\begin{equation}
J(\mu,u)\leq\liminf_{\e\to 0} J(\mu^\e,u^\e).
\end{equation}
\end{lem}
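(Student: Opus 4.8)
The plan is to prove the two claims separately; the second one carries all the work.

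\emph{First claim.} With the control $w$ held fixed, the starting point is that the viscous trajectories converge, $\sup_{t\in(0,T)}W_2(\mu^\e_t,\mu_t)\to 0$. This follows exactly as in the proof of Lemma~\ref{lem:conv_soluzioni}: the uniform second-moment bound \eqref{est:M2eps} and the Itô estimate used there give equi-continuity of $\{\mu^\e\}$ in $C([0,T];\Pn)$, hence relative compactness by Ascoli--Arzelà, and any limit point is identified with the unique solution $\mu$ of \eqref{eq:c} by passing to the limit in the weak formulation --- now with no weak-convergence argument on the control needed, since $w$ is frozen. Granting this, I would split $J(\mu^\e,w)-J(\mu,w)$ into the contributions of $f$, of $g$, and of $\psi(w)$. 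For the $f$-part, write $f(t,x,\mu^\e_t)\mu^\e_t-f(t,x,\mu_t)\mu_t=[f(t,x,\mu^\e_t)-f(t,x,\mu_t)]\mu^\e_t+f(t,x,\mu_t)[\mu^\e_t-\mu_t]$: the first piece is bounded, via \ref{ass:cost}, by $M[1+2|x|+M_2(\mu^\e_t)+M_2(\mu_t)]\,W_2(\mu^\e_t,\mu_t)$, which integrated against $\mu^\e_t$ and using \eqref{est:M2eps} is $\le C\sup_tW_2(\mu^\e_t,\mu_t)\to 0$; the second piece converges because $\mu^\e_t\to\mu_t$ narrowly, uniformly in $t$, while $f(t,\cdot,\mu_t)$ has at most quadratic growth by \ref{ass:cost} and the $|x|^2$-tails of $\{\mu^\e_t\}$ are uniformly small (the third characterisation in Proposition~\ref{prop:conv_w}), so a truncation argument plus dominated convergence in $t$ applies. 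The $g$-part is identical at $t=T$. The $\psi(w)$-part is $\int_0^T\!\int_{\R^d}\psi(w(t,x))(\mu^\e_t-\mu_t)(\de x)\,\de t$, which tends to $0$ by the same narrow-convergence argument, and simply by Kantorovich--Rubinstein (Lemma~\ref{lem:Kant-Rub}) when $w(t,\cdot)$ is Lipschitz, since then $\psi\circ w(t,\cdot)$ is uniformly Lipschitz and bounded. This gives $J(\mu^\e,w)\to J(\mu,w)$.

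\emph{Second claim.} Now $\mu^\e\to\mu$ in $C([0,T];\Pn)$ is provided by Lemma~\ref{lem:conv_soluzioni}, and the $f$- and $g$-contributions to $J(\mu^\e,u^\e)$ converge to those of $J(\mu,u)$ exactly as above, as they do not involve the control. It therefore remains to prove
\[
\liminf_{\e\to 0}\int_0^T\!\!\int_{\R^d}\psi(u^\e(t,x))\,\mu^\e_t(\de x)\,\de t\ \ge\ \int_0^T\!\!\int_{\R^d}\psi(u(t,x))\,\mu_t(\de x)\,\de t .
\]
By $\lambda$-convexity (indeed mere convexity) of $\psi$, $\psi(u^\e)\ge\psi(u)+\nabla\psi(u)\cdot(u^\e-u)$ pointwise, hence, integrating against $\mu^\e_t(\de x)\,\de t$,
\[
\int_0^T\!\!\int_{\R^d}\psi(u^\e)\,\mu^\e_t(\de x)\,\de t\ \ge\ \int_0^T\!\!\int_{\R^d}\psi(u)\,\mu^\e_t(\de x)\,\de t+\int_0^T\!\!\int_{\R^d}\nabla\psi(u)\cdot(u^\e-u)\,\mu^\e_t(\de x)\,\de t .
\]
The first term on the right converges to $\int_0^T\!\int_{\R^d}\psi(u)\,\mu_t(\de x)\,\de t$ because $\psi\circ u(t,\cdot)$ is bounded and Lipschitz uniformly in $t$ --- recall $u\in L^\infty((0,T);W^{1,\infty}(\R^d,U))$ by Corollary~\ref{lem:esistenza_controllo} --- so Lemma~\ref{lem:Kant-Rub} together with \eqref{p-ordered} bounds the difference by $\mathrm{Lip}(\psi\circ u)\,T\sup_tW_2(\mu^\e_t,\mu_t)\to 0$. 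Everything thus reduces to showing the cross term tends to $0$.

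For the cross term, note first that $x\mapsto\nabla\psi(u(t,x))\cdot(u^\e(t,x)-u(t,x))$ is, for a.e.\ $t$, bounded and Lipschitz in $x$ with constants independent of $\e$ and $t$: this uses the $\e$-uniform Lipschitz bound $L_\lambda$ on $u^\e$ from Lemma~\ref{lem:def-ottimo-viscoso} (and the same uniform bound on $u$), compactness of $U$, and the $C^{1,1}$ regularity of $\psi$. Hence, by Lemma~\ref{lem:Kant-Rub} again, replacing $\mu^\e_t$ by $\mu_t$ in the cross term costs only $O(\sup_tW_1(\mu^\e_t,\mu_t))=o(1)$, and it suffices to prove that $\int_0^T\!\int_{\R^d}\nabla\psi(u)\cdot(u^\e-u)\,\mu_t(\de x)\,\de t\to 0$. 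Since $\mu_t$ need not be absolutely continuous, the weak convergence $u^\e\weakto u$ cannot be tested against it directly, so I would regularise: let $\eta^\delta$ be a standard mollifier and consider the measures $\rho^\delta_t\,\mathscr{L}^d$ with density $\rho^\delta_t:=\mu_t*\eta^\delta$, so that $W_1(\mu_t,\rho^\delta_t\,\mathscr{L}^d)\le\delta$ uniformly in $t$, while $\supp\mu_t\subset B_r$ (Theorem~\ref{thm:well_pos_cont}) forces $\supp\rho^\delta_t\subset B_{r+1}$ for $\delta\le 1$. Replacing $\mu_t$ by $\rho^\delta_t\,\mathscr{L}^d$ costs again $O(\delta)$ uniformly in $\e$ (the integrand being uniformly Lipschitz and bounded in $x$), and for each fixed $\delta$ the function $\nabla\psi(u)\,\rho^\delta$ is bounded with support in $[0,T]\times B_{r+1}$, hence belongs to $L^2((0,T);L^{p'}(B_{r+1}))$, which sits inside the dual of $L^2((0,T);W^{1,p}(B_{r+1}))$. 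The weak convergence $u^\e\weakto u$ in $L^2((0,T);W^{1,p}_{\mathrm{loc}}(\R^d,U))$ of Corollary~\ref{lem:esistenza_controllo} then gives $\int_0^T\!\int_{\R^d}\nabla\psi(u)\cdot(u^\e-u)\,\rho^\delta_t\,\de x\,\de t\to 0$ as $\e\to 0$, for each $\delta$; letting $\e\to 0$ and then $\delta\to 0$ closes the argument, and combining with the convergence of the $f$- and $g$-terms yields $J(\mu,u)\le\liminf_\e J(\mu^\e,u^\e)$.

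The main obstacle is precisely this cross term: one must pair the sequence $u^\e$, which converges only weakly and only with respect to Lebesgue measure in space, against the possibly singular time-marginals $\mu_t$. The two ingredients that make the estimate go through are (i) the \emph{$\e$-uniform} Lipschitz bound on the optimal controls $u^\e$ (Lemma~\ref{lem:def-ottimo-viscoso}), which makes the error of mollifying $\mu_t$ uniform in $\e$, and (ii) convexity of $\psi$, which reduces lower semicontinuity of the control cost to the vanishing of a single \emph{linear} functional of $u^\e-u$ rather than to a full weak-lower-semicontinuity statement for the nonlinear functional $v\mapsto\int\!\int\psi(v)\,\de\mu_t\,\de t$.
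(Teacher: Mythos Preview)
Your proof is correct. The first claim follows the paper's line (trajectory convergence plus the regularity of $f,g,\psi$); you are in fact more careful than the paper in handling both pieces of the decomposition $f(t,x,\mu^\e_t)\mu^\e_t-f(t,x,\mu_t)\mu_t$.

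For the lower semicontinuity step you take a genuinely different route. The paper argues abstractly: it defines the functional $S^\mu(v)=\int_0^T\!\int_{\R^d}\psi(v)\,\mu_t(\de x)\,\de t$ on $L^2((0,T);W^{1,p}(B_r))$ with $p>d$, checks it is convex and \emph{strongly} continuous (via the Sobolev embedding $W^{1,p}\hookrightarrow L^\infty$), hence weakly lower semicontinuous, and applies this directly to $u^\e\weakto u$; only afterwards does it swap $\mu_t$ for $\mu_t^\e$ by Kantorovich--Rubinstein. You instead linearise pointwise through the convexity inequality $\psi(u^\e)\ge\psi(u)+\nabla\psi(u)\cdot(u^\e-u)$, swap measures first, and then mollify $\mu_t$ so that the cross term can be tested against the Lebesgue-space weak convergence of $u^\e$. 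Both arguments hinge on the same two ingredients --- convexity of $\psi$ and the $\e$-uniform Lipschitz bound on $u^\e$ --- but the paper packages them into a single ``convex $+$ strongly continuous $\Rightarrow$ weakly lsc'' step, while you make the mechanism completely explicit at the cost of the mollification device. Incidentally, your mollification is not strictly needed: since $\supp\mu_t\subset B_r$ and $W^{1,p}(B_r)\hookrightarrow C(\overline{B_r})$ for $p>d$, the measure $\nabla\psi(u(t,\cdot))\,\mu_t$ already defines an element of $(W^{1,p}(B_r))^*$, uniformly bounded in $t$, so the cross term can be paired directly with $u^\e-u$. This is essentially what the paper's Sobolev-embedding estimate is doing under the hood.
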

\begin{proof}
We divide the proof in two steps.\\
\\
\underline{\em Step 1} \hspace{3mm} {\bf Convergence for a fixed control.}
First, convergence of the control cost immediately follows from Lemma \ref{lem:conv_soluzioni}. We now analyze the running cost, the same argument also applies to the final cost. By \ref{ass:cost}, we have
\begin{equation}
\left|\int_0^T\int_{\R^d} \left(f(t,x,\mu^\e_t)-f(t,x,\mu_t)\right)\mu^\e_t(\de x)\de t\right|\leq L T \sup_{t\in (0,T)}W_2(\mu^\e_t,\mu_t),
\end{equation}
and the conclusion follows from Lemma \ref{lem:conv_soluzioni}.

\underline{\em Step 2} \hspace{3mm} {\bf Semi-continuity.}
It follows from \cite[Theorem 2.12]{MFOC}: arguing as in Step 1, we can show convergence of both running and final costs. Then, we must show that the control cost is lower semi-continuous with respect to the weak convergence \eqref{conv-controlli}. First of all, by Theorem \ref{thm:well_pos_cont} we can fix $r>0$ such that $\supp\,\mu_t\subset B_r$ for all $t\in[0,T]$.  Let $p>d$ and define the functional $S^\mu:L^2((0,T);W^{1,p} (B_r))\to[0,+\infty]$ as
\begin{equation}
S^\mu(g):=\begin{cases}\displaystyle
\int_0^T\int_{\R^d}\psi(g(t,x))\mu_t(\de x)\de t, \hspace{5mm}\mbox{if }\Lip(g(t,\cdot))\in L^\infty(0,T), \\
+\infty\hspace{1cm}\mbox{otherwise}.
\end{cases}
\end{equation}
By convexity of $\psi$, it is immediate to check that $S^\mu$ is convex: thus, it is sufficient to show that it is lower semi-continuous in the strong topology $L^2((0,T);W^{1,p}(B_r))$ to obtain weak lower semi-continuity. Let $g_k$ be a sequence in $L^2((0,T);W^{1,p}(B_r))$ strongly converging to some $g$. By using \ref{ass:controlcost}, we have
\begin{eqnarray*}
&&\left| \int_0^T\int_{\R^d} \left(\psi(g_k(t,x))-\psi(g(t,x))\right)\right.\left.\mu_t(\de x) \de t \right|\leq \int_0^T\int_{\R^d} \left|\psi(g_k(t,x))-\psi(g(t,x)) \right| \mu_t(\de x)\de t\\
&&\leq L\int_0^T\int_{\R^d}\left(|g_k(t,x)|+|g(t,x)|\right) \left|g_k(t,x)-g(t,x) \right| \mu_t(\de x)\de t\\
&& \leq C L\int_0^T\int_{\R^d}\left|g_k(t,x)-g(t,x) \right|^2 \mu_t(\de x)\de t \leq C L\int_0^T\|g_k(t,\cdot)-g(t,\cdot)\|_{L^\infty}^2\\
&&\leq C L\int_0^T\|g_k(t,\cdot)-g(t,\cdot)\|_{W^{1,p}}^2,
\end{eqnarray*}
where the constant $C$ depends on Sobolev embeddings and the $L^2W^{1,p}$ norm of $g_k, g$. Therefore, it holds
\begin{equation}
|S^\mu(g_k)-S^\mu(g)|\leq CL\|g_k-g\|_{L^2W^{1,p}}^2,
\end{equation}
which gives continuity with respect to  the strong topology. Thus, $S^\mu$ is weakly lower semi-continuous and by using Corollary \ref{lem:esistenza_controllo} we obtain that
\begin{equation}\label{e-liminf}
\int_0^T\int_{\R^d} \psi(u(t,x))\mu_t(\de x)\de t\leq \liminf_{\e\to 0}\int_0^T\int_{\R^d} \psi(u^\e(t,x))\mu_t(\de x)\de t.
\end{equation}
Finally, observe that $\psi$ is Lipschitz, since it is $C^1$ on the compact set $U$. We denote by $L_{\psi,U}$ its Lipschitz constant on $U$. Moreover, $u^\e$ is Lipschitz, with a Lipschitz constant $L_\lambda$ independent on $\e$, as shown in \eqref{def:costante-lip-ottimo-vis}. Then, it holds:
\begin{align*}
&\left| \int_0^T\int_{\R^d} \psi(u^\e(t,x)) \left(\mu^\e_t(\de x)-\mu_t(\de x)\right)\de t \right|\leq L_{\psi,U} L_\lambda \int_0^T W_1 (\mu^\e_t,\mu_t)\de t\\
&\leq L_{\psi,U} L_\lambda \int_0^T W_2 (\mu^\e_t,\mu_t)\de t  \leq C \sup_{t\in (0,T)} W_2(\mu^\e_t,\mu_t).
\end{align*}
Merging it with \eqref{e-liminf} and recalling \eqref{conv:w2nu}, we have 
\begin{equation}\label{e-liminf2}
\int_0^T\int_{\R^d} \psi(u(t,x))\mu_t(\de x)\de t\leq \liminf_{\e\to 0}\int_0^T\int_{\R^d} \psi(u^\e(t,x))\mu^\e_t(\de x)\de t.
\end{equation}
\end{proof}

\subsection{Proof of the main Theorem} We are now ready to prove our main result.
\begin{proof}[Proof of Theorem \ref{thm:main}]
Let $\Lambda(T,L)$ be the constant defined in \eqref{def:Lambda}. Then, by Lemma \ref{lem:def-ottimo-viscoso} we know that there exists a unique $(\mu^\e,u^\e)$ optimal pair for $\Ppe$. By Lemma \ref{lem:esistenza_controllo}, we know that there exists a function $u\in L^\infty((0,T);\Lip(\R^d,U))$ such that
$$
u^\e\weakto u \hspace{0.3cm} \mbox{in }  L^2((0,T);W^{1,p}_\mathrm{loc}(\R^d,U)),
$$
for every $1\leq p<\infty$. This proves point $(i)$ of the theorem. Moreover, there exists a unique $\mu\in C([0,T];\Pn)$ which solves \eqref{eq:cont} with control $u$; thus, by Lemma \ref{lem:conv_soluzioni} we have
$$
\mu^\e\to \mu \hspace{0.3cm} \mbox{in } C([0,T],\Pn).
$$
This is point $(ii)$ of the theorem. The convergence of the cost is a consequence of Lemma \ref{lem:convergenza_costo}. We only need to show optimality of $(\mu,u)$ for $\Pp$. Let $w\in \mathcal{A}\setminus\{u\}$ be an admissible control for $\Pp$ and $\mu^w$ the corresponding trajectory. We define $\mu^{w,\e}$ to be the unique solution of \eqref{eq:ad} with control $w$; since $(\mu^{w,\e},w)$ is an admissible pair for $\Ppe$ and $(\mu^\e,u^\e)$ is the unique optimal pair, we have that
\begin{equation}\label{a}
J(\mu^\e,u^\e) < J(\mu^{w,\e},w).
\end{equation}
Now, by Lemma \ref{lem:conv_soluzioni} we know that $\mu^{w,\e}$ converges to the unique solution $\mu^w$ of \eqref{eq:cont} with control $w$ and the associated cost converges:
$$
\lim_{\e\to 0}J(\mu^{w,\e},w)=J(\mu^w,w).
$$ 
Then, combining Lemma \ref{lem:convergenza_costo}, equation \eqref{a}, and the convergence above, it holds
\begin{equation*}
J(\mu,u)\leq\liminf_{\e\to 0} J(\mu^\e,u^\e)\leq J(\mu^w,w),
\end{equation*}
for any admissible pair $(\mu^w,w)$. Then, $(\mu,u)$ is an optimal pair for $\Pp$ and the proof is complete.
\end{proof}

\subsection{A different approach} 
It will be clear now that the lower bound on $\lambda$ provides a sufficient condition for optimality in Theorem \ref{teo:ott-st-suff}. Thus, it is reasonable to ask whether this is necessary or not for convergence of the vanishing viscosity method when an optimal control of $\Ppe$ is prescribed. We now prove the Corollary \ref{main-cor}.
\begin{proof}[Proof of Corollary \ref{main-cor}]
Let $(\mu^\e,u^\e)$ be an optimal pair for $(\mathcal{P}_\e)$, our goal is to show that the Lispchitz constant of $u^\e$ is independent on $\e$. We associate to $(\mu^\e,u^\e)$ an optimal control of the problem (SOC) in the following way. First of all, note that an optimal control for (SOC) must be in closed loop feedback form. This means that $\alpha_t=\phi(t,\cdot)$ for some deterministic function $\phi$. To prove this claim, we consider an admissible control $\alpha_t$ for (SOC) and we define $X^\alpha_t$ the corresponding controlled state. Denote by $\sigma\{X^\alpha_t\}$ the $\sigma$-field generated by $X^\alpha_t$ and define
\begin{equation}
    \tilde\alpha_t:=\E[\alpha_t|X^\alpha_t],
\end{equation}
the conditional expectation of $\alpha_t$ given $\sigma\{X^\alpha_t\}$. Note that $\tilde\alpha_t$ is square integrable, thus admissible for (SOC). By the Doob-Dynkin lemma, there exists a function $e_\alpha:\R^d\to\R^d$ such that 
\begin{equation}
    \tilde\alpha_t=e_{\alpha_t}(X_t),
\end{equation}
implying that $\tilde\alpha_t$ is in closed loop feedback form. Let $\tilde X^\alpha_t$ be the controlled process corresponding to $\tilde\alpha_t$ and denote by $\mu^{\tilde\alpha}_t=\Law(\tilde X^\alpha_t)$: since any controlled process, corresponding to a square-integrable control in closed loop feedback form, induces a solution to the advection-diffusion equation \eqref{eq:ad}, $\mu^{\tilde\alpha}_t$ solves it.  Thus, if we compute the cost of $\tilde\alpha_t$, using the convexity of $\psi$ and Jensen inequality we obtain that 
$$
J(\mu^{\tilde\alpha},\tilde\alpha)=J^S(\tilde{\bm\alpha})\leq J^S(\bm\alpha).
$$
The above inequality shows that, for any given admissible control $\bm\alpha$ we can always define an admissible control in closed loop feedback form with a smaller cost. Thus, $u$ is an optimal control also for the stochastic problem (we recall that strong existence holds for \eqref{eq:SDE-generica}). Then, by Theorem \ref{thm:cond-necessaria} and Lemma \ref{lem:lipcontr}, we obtain that $u^\e$ must be of the form \ref{def:ottimo-viscoso} and therefore unique. Here we used the assumption that $\lambda>0$ but we did not require a lower bound on it. We now provide a stability estimate for the solution of \eqref{eq:FB-PMP} with constants which are independent on $\e$, which will ensure that the constant in \eqref{def:lips_constant-master-field} is  also $\e$-independent.
From \eqref{stabilityFSDE}, \ref{lem:lipcontr} and \eqref{stability_bsde} we obtain
\begin{align*}
\E\left[ \sup_{t\in(0,T)}|X_t^{\e,'}-X_t^\e|^2 \right]&\leq C_2(T,L)\left(\E\left[ |\xi'-\xi|^2 \right]+\E\left[ \int_0^T|\hat\alpha(Y^{\e,'}_t)-\hat\alpha(Y^{\e}_t)|^2\de t \right]\right)\\
&\leq C_2(T,L)\E\left[ |\xi'-\xi|^2 \right]+\frac{C_2(T,L)}{\lambda^2}\E\left[ \int_0^T|Y^{\e,'}_t-Y^\e_t|^2\de t \right]\\
&\leq C_2(T,L)\E\left[ |\xi'-\xi|^2 \right]+\frac{T C_2(T,L)}{\lambda^2}\E\left[ \sup_{t\in(0,T)}|X_t^{\e,'}-X_t^\e|^2\right]
\end{align*}
Thus, if we define $\Lambda':=\sqrt{T C_2(T,L)}$, we have that
$$
\frac{T C_2(T,L)}{\lambda^2}\leq 1,
$$
and consequently
\begin{equation}
\E\left[ \sup_{t\in(0,T)}|X_t^{\e,'}-X_t^\e|^2 \right]\leq \frac{\lambda^2 C_2(T,L)}{\lambda^2-C_2(T,L)T}\E\left[ |\xi'-\xi|^2 \right].
\end{equation}
Finally, we use again \eqref{stability_bsde} to compute
\begin{equation}
\E\left[ \sup_{t\in(0,T)}|Y^{\e,'}_t-Y^\e_t|^2\right]\leq C_4(T,L)\frac{\lambda^2 C_2(T,L)}{\lambda^2-C_2(T,L)T}\E\left[ |\xi'-\xi|^2 \right].
\end{equation}
The conclusion now follows by using Lemma \ref{lem:esistenza_master-field} and the Lemmata of Section 4.1 and 4.2 as we did for the proof of Theorem \ref{thm:main}.
\end{proof}

\section{The role of convexity hypotheses} \label{s-controesempio}
The aim of this section is to discuss the role of the convexity hypotheses {\bf (C)} for the validity of Theorem \ref{thm:main}. In particular, we show that, by relaxing the strict convexity assumption on the control cost $\psi$, then convergence of optimal controls $u^\e$ for $\Ppe$ to an optimal control $u$ of $\Pp$ is not ensured. This is the core of the counterexample that we describe in the following.

We consider the minimization problem of the functional
\begin{equation}\label{def:J-controesempio}
J(\mu,u)=\int_0^T\int_{\R}\psi(u(t,x))\mu_t(\de x)\de t
\end{equation}
where the control cost $\psi$ is $C^\infty$, positive, convex and satisfies $\psi(s)=0 $ for $s\in[-1,1]$. Note that the function $\psi$ is clearly not strictly convex. As an example, consider the $C^\infty$, not analytic function $$\phi(x)=\begin{cases}
0 &\mbox{~for~}x=0,\\
\exp(-1/|x|) &\mbox{~for~}x\neq 0.
\end{cases}$$
Then, one can build a function $\psi(x)$ as above by choosing
$$\psi(x):=\begin{cases}
0 &\mbox{~for~}x\in[0,1],\\
\displaystyle \int_0^{x-1}ds\int_0^s \phi(t)\,dt &\mbox{~for~}x>1,\\
\psi(-x)&\mbox{~for~}x<0.
\end{cases}$$
We assume that the dynamics is given by the equation
\begin{equation}\label{eq:cont_contr}
\begin{cases}
\partial_t\mu_t+\dive[u(t,x)\mu_t]=0,\\
\mu_0=\delta_0,
\end{cases}
\end{equation}
where $\delta_0$ is the Dirac delta centered in $0$. 

Set $U=[-1,1]$ and $u\in \Lip(\R;U)$ be an admissible control. By the standard Cauchy-Lipschitz Theorem, to any admissible control we can associate a unique flow $X_t$, i.e. the solution of
\begin{equation}\label{eq:ode_contr}
    \begin{cases}
    \dot{X}_t=u(X_t),\\
    X_0=x.
    \end{cases}
\end{equation}
It follows that $\mu_t=\delta_{X_t}$ is the unique solution of \eqref{eq:cont_contr} with control $u$. It is clear that any Lipschitz function $u\in \Lip(\R;[-1,1])$ is also optimal, since the corresponding cost is identically zero.

We now consider the viscous optimal control, i.e. where the dynamics is governed by the equation
\begin{equation}\label{eq:ad_contr}
\begin{cases}
\partial_t\mu^\e_t+\dive[u(t,x)\mu^\e_t]=\e\Delta\mu^\e_t,\\
\mu^\e_0=\delta_0.
\end{cases}
\end{equation}
The same observations made for the non-viscous case apply: every Lipschitz function $u\in \Lip(\R;[-1,1])$ is optimal and it is associated to a unique solution $\mu^\e$ of \eqref{eq:ad_contr}.
For each choice of $u\in \Lip(\R;[-1,1])$, one has convergence of the controls (they are $\e$-independent), convergence of the trajectories (as an easy consequence of Lemma \ref{lem:conv_soluzioni}), and convergence of the cost (as a consequence of Lemma \ref{lem:convergenza_costo}).
However, the viscous problem has other solutions for which the convergence result does not hold. An example is provided by the function $\tilde{u}(x)=\mathrm{sign}(x)$: its cost is identically zero and thus it is an optimal control. Moreover, $\tilde{u}$ is bounded and then it is associated to a unique solution $\tilde{\mu}^\e$ of \eqref{eq:ad_contr}, see \cite{MANITA2015}. Nevertheless, since $\tilde{u}$ is independent from $\e$, it does not converge to a Lipschitz optimal control of the inviscid problem. Furthermore, notice that the equation \eqref{eq:cont_contr} with vector field $\tilde{u}$ has multiple solutions, as a consequence of the non-uniqueness for the corresponding ODE \eqref{eq:ode_contr} with initial datum $x=0$. As an example, both the trajectories $x(t)=t$ and $x(t)=-t$ are solutions of the ODE in the Caratheodory sense. \\
\\

\noindent {\bf Acknowledgments:} This work is funded by the University of Padua under the STARS Grants programme CONNECT: Control of Nonlocal Equations for Crowds and Traffic models. G. Ciampa is supported by the ERC STARTING GRANT 2021 ``Hamiltonian Dynamics, Normal Forms and Water Waves" (HamDyWWa), Project Number: 101039762. Views and opinions expressed are however those of the authors only and do not necessarily reflect those of the European Union or the European Research Council. Neither the European Union nor the granting authority can be held responsible for them.

\bibliographystyle{hplain}
\bibliography{VWbib}

\end{document}